\documentclass[12pt]{article}
\usepackage{amsmath,amsthm,amssymb,times,titlesec}
\usepackage{graphicx}
\usepackage[T1]{fontenc}
\usepackage[utf8]{inputenc}
\usepackage{pdfsync}
\usepackage{float}
\usepackage{xcolor}
\usepackage[english]{babel}
\usepackage{verbatim}
\numberwithin{equation}{section}

\oddsidemargin 1pt
\evensidemargin 1pt
\marginparwidth 30pt 
\topmargin 1pt       
\headheight 1pt      
\headsep 1pt         
\footskip 24pt       
\textheight 
            650pt
\textwidth 
           460pt

\titleformat{\subsubsection}
{\normalfont\fontsize{13}{14}\selectfont\itshape}{\thesubsubsection}{1em}{}


\newcommand\scaleddot{\scalebox{.89}{.}}
\makeatletter
\renewcommand{\ddddot}[1]{%
  {\mathop{\kern\z@#1}\limits^{\makebox[0pt][c]{\vbox to-2\ex@{\kern-\tw@\ex@\hbox{\normalfont\scaleddot\kern-0.5pt\scaleddot\kern-0.5pt\scaleddot\kern-0.5pt\scaleddot}\vss}}}}}

\newcommand{\abs}[1]{\lvert#1\rvert}
\newcommand\norm[1]{\left\lVert#1\right\rVert}

\newcommand{\FF}{{\mathcal F}}

\DeclareMathOperator{\sech}{sech}

\newtheorem{theorem}{Theorem}[section]
\newtheorem{lemma}[theorem]{Lemma}
\newtheorem{proposition}[theorem]{Proposition}

\newtheorem{remark}[theorem]{Remark}
\newtheorem{definition}[theorem]{Definition}
\theoremstyle{definition}

\title
{
	Solitary wave solutions of a Whitham-Boussinesq system
}

\author{E. Dinvay$^1$ and D. Nilsson$^2$}
\date{}

\begin{document}

\maketitle

\begin{center}
$^1$Department of Mathematics,
University of Bergen,
\\
Postbox 7800, 5020 Bergen, Norway.
\\
$^2$Department of Mathematical Sciences, NTNU,
\\
NO-7491 Trondheim, Norway.
\\
E-mail addresses:
Evgueni.Dinvay@uib.no;
Dag.Nilsson@ntnu.no
\end{center}

\begin{center}
{\it Current addresses:}
\\
$^1$Inria Rennes - Bretagne Atlantique,
\\
Campus universitaire de Beaulieu Avenue du G\'en\'eral Leclerc,
\\
35042 Rennes Cedex,
France.
\\
$^2$Department of Mathematics, Saarland University,
\\
Campus E24, 66123 Saarbr{\"u}cken, Germany.
\\
E-mail addresses:
Evgueni.Dinvay@inria.fr;
nilsson@math.uni-sb.de
\end{center}

\begin{abstract}
The travelling wave problem
for a particular bidirectional Whitham system
modelling surface water waves is under consideration.
This system firstly appeared in \cite{Dinvay_Dutykh_Kalisch},
where it was numerically shown to be stable and a good approximation
to the incompressible Euler equations.
In subsequent papers \cite{Dinvay, Dinvay_Tesfahun}
the initial-value problem was studied and
well-posedness in classical Sobolev spaces was proved.
Here we prove existence of solitary wave solutions
and provide their asymptotic description.
Our proof relies on a variational approach and
a concentration-compactness argument.
The main difficulties stem from the fact
that in the considered Euler-Lagrange equation
we have a non-local operator of positive order
appearing both in the linear and non-linear parts.
Our approach allows us to obtain solitary waves
for a particular Boussinesq system as well.
\end{abstract}

\section{Introduction}

\subsection{Motivation and background}

In this work we consider solitary wave solutions of the Whitham-Boussinesq system
\begin{align}
\label{wb-sys1}
	K \eta_t + K v_x + (\eta v)_x
	&= 0
	,\\
\label{wb-sys2}
	K v_t + \eta_x + vv_x
	&= 0
	,
\end{align}
where 
$K$ is a Fourier multiplier operator meaning
$\FF(Kf)(\xi) = k(\xi) \widehat{f}(\xi)$
for any tempered distribution $f \in \mathcal S'(\mathbb R)$.
Here $\mathcal{F}$ stands for the Fourier transform
\begin{equation*}
	\mathcal{F}(f)(\xi)
	=
	\widehat{f}(\xi)
	=
	\int_\mathbb{R}f(x)\mathrm{e}^{-\mathrm{i}x\xi}\ \mathrm{d}x
	.
\end{equation*}
A solitary wave is a solution of the form
\begin{equation}
\label{ansatz}
	\eta(x,t) = \eta(x+ct)
	,\
	v(x,t)=v(x+ct),
\end{equation}
with $\eta(x+ct), \ v(x+ct)\rightarrow 0$,
as $\abs{x+ct}\rightarrow \infty$.
%
%
%
%
The Fourier multipliers we will be considering in this paper include
\begin{equation}
\label{K_definition}
	K = \frac{D}{\tanh(D)}
\end{equation}
with $\mathrm{D}=-\mathrm{i}\partial_x$, and symbol
\(
	k(\xi) = \xi / \tanh \xi
	.
\)
Note that this operator is of order one, it is equivalent to the Bessel potential $J =(1-\partial_x^2)^{1/2}$
associated with the symbol
\(
	\langle \xi \rangle
	=
	\sqrt{1 + \xi^2}
,	
\)
 since
\(
	\xi / \tanh \xi \simeq	
	\langle \xi \rangle
	.
\)
Such choice of $K$ is motivated by the water wave problem,
when $\eta$ denotes the surface elevation
and $v$ is the fluid velocity at the surface. 
Another example is the operator $K = 1 - b \partial_x^2$, with the corresponding symbol $k(\xi) = 1 + b \xi^2$.  For this choice of $K$, \eqref{wb-sys1}--\eqref{wb-sys2} becomes a $(-b,b,0,b)$-Boussinesq system. We recall here that the general $(a, b, c, d)$-Boussinesq system is of the form
\begin{equation}
\label{Boussinesq_sys}
	\begin{aligned}
		\eta_t + v_x + (\eta v)_x
		+ av_{xxx} - b\eta_{xxt}
		&= 0
	,\\
		v_t + \eta_x + vv_x
		+ c\eta_{xxx} - dv_{xxt}
		&= 0
	,
	\end{aligned}
\end{equation}
and was derived in \cite{Bona_Chen_Saut1}.
It was shown in \cite{Chen_Nguyen_Sun2011}
that \eqref{Boussinesq_sys} exhibits
travelling waves for $a, c < 0$ and $b = d$.
The main result of the present paper (Theorem \ref{main-res-1}) implies that \eqref{Boussinesq_sys}
has solitary waves in the case
\(
	a = -b, c = 0, d = b
\)
for any $b > 0$.
To our knowledge this is a new result.
Note that with $b = 1/3$ this choice corresponds
to a physically relevant case
\cite{Bona_Chen_Saut1}.

The model \eqref{wb-sys1}-\eqref{wb-sys2}
with $K$ defined by \eqref{K_definition}
was formally derived in \cite{Dinvay_Dutykh_Kalisch}
from the incompressible Euler equations to model
fully dispersive shallow water waves.
In fact it can be regarded as a fully dispersive
improvement of the $(-1/3, 1/3, 0, 1/3)$-Boussinesq system.
System \eqref{wb-sys1}--\eqref{wb-sys2} was introduced
in \cite{Dinvay_Dutykh_Kalisch} as an extension
of the unidirectional Whitham equation
\begin{equation}
\label{Whitham}
	\partial_t \eta
	+ \sqrt{K^{-1}} \partial_x \eta
	+ \frac 32  \eta \partial_x \eta = 0
\end{equation}
allowing two-way wave propagation.
Equation \eqref{Whitham} was proved to be locally well-posed
in Sobolev spaces $H^s$ with $s > 3/2$ in \cite{Ehrnstrom_Escher_Pei}.
In recent years, several interesting phenomena predicted by Whitham
has been confirmed, for example, a solitary wave regime close to KdV \cite{Ehrnstrom_Groves_Wahlen},
the existence of a wave of greatest height
\cite{Ehrnstrom_Wahlen2019},
the existence of shocks \cite{Hur2017},
and modulational instability of 
steady periodic waves \cite{Hur_Johnson_2,Hur_Johnson}.
Apart from \eqref{wb-sys1}-\eqref{wb-sys2}
in recent years several 
bidirectional extensions of \eqref{Whitham} have been put forward to,
as for example
\begin{gather}
\begin{aligned}
	\eta_t
	&=
	- K^{-1}v_x-(\eta v)_x,\\
 v_t&=-\eta_x-vv_x
\end{aligned}
\label{asmp}
\end{gather}
and
\begin{gather}
\begin{aligned}
	\eta_t
	&=
	-v_x-(\eta v)_x,\\
	v_t
	&=-K^{-1}\eta_x-vv_x
	.
\end{aligned}
\label{hp}
\end{gather}
The first system \eqref{asmp} has a Hamiltonian structure
and was formally derived in \cite{Aceves_Sanchez_Minzoni_Panayotaros}
from
the incompressible Euler equations to model fully dispersive
shallow water waves whose propagation is allowed to be both left- and rightward, and appeared in
\cite{Lannes, Saut_Wang_Xu}
as a full dispersion system in the Boussinesq regime with the dispersion of the water
waves system.
There have been several investigations on this system:
local well-posedness \cite{Pei_Wang,Klein_Linares_Pilod_Saut}
(in homogeneous Sobolev spaces at a positive background), a
logarithmically cusped wave of greatest height
\cite{Ehrnstrom_Johnson_Claassen2019}.
In \cite{Pei_Wang}
they
impose an additional non-physical condition
$\eta \geqslant C > 0$.
Kalisch and Pilod \cite{Kalisch_Pilod}
have proved local well posedness
for a surface tension regularisation of System
\eqref{asmp}
with $( 1 - \beta \partial_x^2 ) K^{-1}$ standing instead
of $K^{-1}$ in the first equation.
So they managed to
remove the positivity assumption $\eta > 0$.
However, the maximal time of existence for their
regularisation is bounded by the capillary parameter $\beta > 0$.
The existence of solitary waves for this system was established in 
\cite{Nilsson_Wang}.
The second system \eqref{hp} was introduced in \cite{Hur_Pandey}
in order to better model modulational instabilities.
Indeed, it was found in \cite{Hur_Pandey} that,
when including the effects of surface tension,
the system \eqref{hp} gives accurate predicitions of
Benjamin-Feir instabilities.
It was pointed out in \cite{Carter} that System \eqref{hp}
has also a Hamiltonian structure.
To the authors knowledge
neither well-posedness nor the existence of solitary waves
for the system \eqref{hp} have been established yet.
There are also numerical results on
the validity of the both systems \eqref{asmp}, \eqref{hp}
for modelling waves on shallow water \cite{Carter},
numerical bifurcation and spectral stability \cite{Claassen_Johnson}.

System \eqref{wb-sys1}--\eqref{wb-sys2}
has been recently shown to be well-posed in
$H^s(\mathbb R) \times H^{s+1/2}(\mathbb R)$
with $s > -1/10$
in \cite{Dinvay, Dinvay_Tesfahun}.
Moreover, the result is global
for $s \geqslant 0$
if the initial data $(\eta_0, v_0)$
has sufficiently small
$L^2 \times H^{1/2}$-norm.
The latter is the main advantage of 
Equations \eqref{wb-sys1}--\eqref{wb-sys2} comparing
with the other models \eqref{asmp}, \eqref{hp}.
%
%

There is a fully-dispersive Green-Naghdi type model introduced by Duch\^ene, Israwi and Talhouk
\cite{Duchene_Israwi_Talhouk} that was not considered in \cite{Dinvay_Dutykh_Kalisch}.
Existence of solitary wave solutions for this system
was established in \cite{Duchene_Nilsson_Wahlen}.
For more discussion on the Cauchy problem
and rigorous justification of the various Whitham related equations
we refer to \cite{Klein_Linares_Pilod_Saut}.

The main aim of the current paper is to prove the existence
of solitary wave solutions for \eqref{wb-sys1}-\eqref{wb-sys2}
with $K$ being an admissible Fourier multiplier,
see Definition \ref{admissible} below. Both $K$ given by \eqref{K_definition} and $K = 1 - b \partial_x^2$ are examples of admissible Fourier multipliers.
Note that the existence of solitary waves supports
validity of System \eqref{wb-sys1}-\eqref{wb-sys2}
from physical perspective
as a weakly nonlinear wave model.
We use a variational approach together with Lion's method of
concentration-compactness \cite{Lions}
to establish the existence of
solitary wave solutions of \eqref{wb-sys1}--\eqref{wb-sys2}.
This approach has been used extensively to prove
existence of solitary wave solutions to equations of the form 
\begin{equation}\label{model-eq-example}
u_t+Lu_x+n(u)_x=0,
\end{equation}
where $L$ is a Fourier multiplier operator of order $s$,
essentially meaning that the symbol of the operator can be bounded from above and below
by $\abs{\xi}^s$ up to a constant,
and $n(u)$ is a homogeneous nonlinear term.
Under the travelling wave ansatz $u = u(x + ct)$,
equation \eqref{model-eq-example} becomes
\begin{equation}
\label{travel-intro}
	cu+Lu+n(u)=0.
\end{equation}
In \cite{Weinstein1987} existence and stability of solitary wave solutions for long wave model equations of the form \eqref{model-eq-example}, with $s\geq 1$, was established. This approach was later used in \cite{Albert_Bona_Saut} to prove existence of solitary waves for an equation used to model stratified fluids, with $s=1$,  and was later generalized in \cite{Albert} to $s\geq 1$.
A class of Whitham type equations of the form \eqref{model-eq-example} was studied in \cite{Ehrnstrom_Groves_Wahlen},
with a Fourier multiplier operator of negative order.
In this case the resulting functional in the constrained minimization problem is not coercive. This makes the application of the concentration compactness theorem a lot more technical, requiring the authors to use a strategy developed in \cite{Buffoni2004, Groves_Wahlen2011}
and first consider a related penalized functional acting on periodic functions.
In the recent work \cite{Stefanov_Wright2018} an entirely different
approach to proving the existence of solitary wave solutions
of the Whitham equation, based on the implicit function theorem
instead, was presented.
Arnesen proved existence of solitary wave solutions to
two different classes of model equations \cite{Arnesen},
one of them of the form \eqref{model-eq-example}, for $s>0$.
Results, similar and previous to those of Arnesen,
were obtained in \cite{Linares_Pilod_Saut2015}
in application to two particular cases, namely,
the fractional Korteweg-de Vries
and the fractional Benjamin-Bona-Mahony equations.
The case when the nonlinearity $n$ is allowed to be inhomogeneous was considered in \cite{Maehlen},
where the author proved the existence of solitary wave solutions of \eqref{model-eq-example}, for operators of positive order and with weak assumptions on the regularity of the symbol.

These methods have also been applied to bidirectional
Whitham type equations.
As mentioned above, in \cite{Duchene_Nilsson_Wahlen} the authors established the existence of solitary waves for the class of modified Green--Naghdi equations introduced in
\cite{Duchene_Israwi_Talhouk},
and in \cite{Nilsson_Wang}
the authors proved the existence of solitary waves
for \eqref{asmp}.
Just as in \cite{Ehrnstrom_Groves_Wahlen},
both of the functionals appearing in
\cite{Duchene_Nilsson_Wahlen, Nilsson_Wang}
are noncoercive, so the minimization arguments adapted to noncoercive functionals developed in \cite{Buffoni2004, Groves_Wahlen2011}
are used in order to obtain the existence of minimizers. In addition, the Fourier multiplier operator is entangled with the nonlinearity in \cite{Duchene_Nilsson_Wahlen, Nilsson_Wang}, which makes the proofs more technical.
%
%
\subsection{The minimization problem}
\label{minproblem} 
%
%
We formulate the problem in the variational settings.
A Hamiltonian structure \cite{Dinvay}
of System \eqref{wb-sys1}--\eqref{wb-sys2}
allows us to do this in a relatively straightforward way.
Indeed, under the travelling wave ansatz \eqref{ansatz},
equations \eqref{wb-sys1}--\eqref{wb-sys2} can be written as
\begin{align}\label{wb-sys-travelling1}
Kv+\eta v+cK\eta&=0,\\
\eta+\frac{v^2}{2}+cKv&=0,\label{wb-sys-travelling2}
\end{align}
where the constants of integration are set to zero since we are considering solitary wave solutions.

Regarding the Hamiltonian and momentum
\begin{align*}
\mathcal{H}(\eta,v)&=\frac{1}{2}\int_{\mathbb{R}}\eta^2+vKv+\eta v^2\ \mathrm{d}x,\\
\mathcal{I}(\eta,v)&=\int_\mathbb{R}\eta Kv\ \mathrm{d}x,
\end{align*}
one can notice that Equation
\eqref{wb-sys-travelling1} can be written as
\begin{equation*}
\mathrm{d}_v\mathcal{H}+c\mathrm{d}_v\mathcal{I}=0,
\end{equation*}
and Equation \eqref{wb-sys-travelling2} as 
\begin{equation*}
\mathrm{d}_\eta\mathcal{H}+c\mathrm{d}_\eta\mathcal{I}=0.
\end{equation*}
One can try to proceed further with this formulation
as was done for the
$(a, b, c, d)$-Boussinesq system \eqref{Boussinesq_sys}
in \cite{Chen_Nguyen_Sun2011}.
However, instead of looking for critical points
of the functional
\(
	\mathcal H + c \mathcal I
\)
we reduce System
\eqref{wb-sys-travelling1}-\eqref{wb-sys-travelling2} to
a single travelling wave equation
that can in turn be interpreted as a constrained minimization problem.
Note that our approach allows us to extend the results
obtained in \cite{Chen_Nguyen_Sun2011}.
We can derive a travelling wave equation in the following way. 
In \eqref{wb-sys-travelling1}--\eqref{wb-sys-travelling2} we make the change of variable $v=K^{-1/2}\tilde{v}$, which yields the new system
\begin{align}
\label{trav1}
K^{1/2}\tilde{v}+\eta(K^{-1/2}\tilde{v})+cK\eta&=0,\\
\eta+\frac{(K^{-1/2}\tilde{v})^2}{2}+cK^{1/2}\tilde{v}&=0
\label{trav2}
.
\end{align}
From \eqref{trav2} we get that
\begin{equation}\label{eta-solution}
\eta=-\frac{(K^{-1/2}\tilde{v})^2}{2}-cK^{1/2}\tilde{v},
\end{equation}
and inserting this into \eqref{trav1} yields
\begin{equation}\label{maineq0}
\tilde{v}-K^{-1/2}\left(\frac{(K^{-1/2}\tilde{v})^3}{2}\right)-cK^{-1/2}\big((K^{1/2}\tilde{v})(K^{-1/2}\tilde{v})\big)-cK^{1/2}\left(\frac{(K^{-1/2}\tilde{v})^2}{2}\right)-c^2K\tilde{v}=0.
\end{equation}
Here we make the change of variables $\tilde{v}=cu$ so that \eqref{maineq0} becomes
\begin{equation}
\label{maineq1}
\frac{1}{c^2}u-K^{-1/2}\left(\frac{(K^{-1/2}u)^3}{2}\right)-K^{-1/2}\big((K^\frac{1}{2}u)(K^{-1/2}u)\big)-K^{1/2}\left(\frac{(K^{-1/2}u)^2}{2}\right)-Ku=0.
\end{equation}
Now let us show that Equation \eqref{maineq1} represents
an Euler-Lagrange equation for some functional.
Indeed, regard the surface elevation and velocity
defined by $u$ as follows
\begin{align}
\label{eta-u}
	\eta_u
	&=
	-c^2\left( \frac{(K^{-1/2}u)^2}{2} + K^{1/2}u \right)
	, \\
\label{v-u}
	v_u
	&=
	cK^{-1/2}u
	,
\end{align}
and note that
\begin{equation*}
\mathcal{H}(\eta_u,v_u)+c\mathcal{I}(\eta_u,v_u)=c^4\left[-\frac{1}{2}\int_\mathbb{R} uKu+(K^{1/2}u)(K^{-1/2}u)^2+\frac{(K^{-1/2}u)^4}{4}\ \mathrm{d}x+\frac{1}{2c^2}\int_\mathbb{R} u^2\ \mathrm{d}x\right]
,
\end{equation*}
which leads us to define
\begin{align*}
\mathcal{E}(u)&=\frac{1}{2}\int_\mathbb{R}uKu+(K^{1/2}u)(K^{-1/2}u)^2+\frac{(K^{-1/2}u)^4}{4}\ \mathrm{d}x,\\
&=\frac 12 \int_\mathbb{R}
	\left(K^{1/2}u + \frac 12 (K^{-1/2}u)^2\right)^2
	\ \mathrm{d}x\\
\mathcal{Q}(u)&=\frac{1}{2}\int_\mathbb{R}u^2\ \mathrm{d}x.
\end{align*}
We then note that equation \eqref{maineq1} can be written as
\begin{equation*}
\mathrm{d}\mathcal{E}(u)+\lambda\mathrm{d}\mathcal{Q}(u) = 0,
\end{equation*}
where $\lambda=-1/c^2$.
Hence, in order to find solutions of \eqref{maineq1} we can consider the constrained minimization problem
\begin{equation}
\label{min2}
	\inf _{u \in U_q} \mathcal E(u)
	\quad
	\mbox{ with }
	\quad
	U_q =
	\left\{
		u \in H^{1/2}(\mathbb R) \ \colon \mathcal Q(u) = q
	\right\}
	.
\end{equation}

Instead of working with the specific Fourier multiplier $K$,
we will work with a more general class of Fourier multipliers,
and thus a more general constrained minimization problem.
The proof does not get much more complicated
if we consider a general class of multipliers.
Moreover, as was mentioned in the introduction,
it allows us to treat also
a Boussinesq system not considered from this perspective before.

\begin{definition}[Admissible Fourier multipliers]
\label{def-admissible}
\label{admissible}
Let operator $L$ be a Fourier multiplier,
with symbol $m$, i.e.
\begin{equation*}
\mathcal F(Lf)(\xi) = m(\xi) \widehat{f}(\xi).
\end{equation*}
We say that $L$ is admissible
if $m$ is even, $m(0)>0$ and for some $s'>1$ and $s>1/2$
the symbol satisfies the following restrictions.
\begin{enumerate}
\item
The function $\xi\mapsto \frac{m(\xi)}{\langle \xi\rangle^s}$
is uniformly continuous, and
\begin{align}
0<m(\xi)-m(0) & \lesssim \abs{\xi}^{s'}
\, \mbox{ for } \,
\abs{\xi} \leqslant 1
,
\label{m1}\\
m(\xi)-m(0) & \simeq \abs{\xi}^s
\, \mbox{ for } \,
\abs{\xi}>1\label{m2}
.
\end{align}
\item
For each $\varepsilon> 0$ the kernel
of operator $L^{-1/2}$ satisfies
\begin{equation}
\label{int-cond}
	\mathcal F^{-1} \left(m^{-1/2}\right)
	\in L^2(\mathbb{R} \setminus (- \varepsilon, \varepsilon)).
\end{equation}
There exists
\(
	p \in (1, 2) \cap [2 / (s+1),2)
\)
such that
\begin{equation}
\label{int-cond2}
	\mathcal F^{-1} \left(m^{-1/2}\right)
	\in L^p(- 1, 1)
	.
\end{equation}
\end{enumerate}
\end{definition}

The symbol $m(\xi)=\xi/\tanh(\xi)$ satisfies the conditions
of Definition \ref{admissible} with $s = 1$ and $s'= 2$
as was shown in
\cite{Ehrnstrom_Wahlen2019}.
For the symbol $m(\xi)= 1 + b \xi^2$ with $b > 0$
we have $s = s' = 2$ and
\(
	m^{-1/2} \in L^2(\mathbb R)
	.
\)
In particular,
\(
	\mathcal F^{-1} \left(m^{-1/2}\right)
	\in L^2(\mathbb R)
\)
and so \eqref{int-cond}, \eqref{int-cond2} hold.

We have the corresponding functional
\begin{equation}
\label{general_E_definition}
	\mathcal E(u)=\frac 12 \int_\mathbb{R}
	\left(L^{1/2}u + \frac 12 (L^{-1/2}u)^2\right)^2
	\ \mathrm{d}x
\end{equation}
%
%
defined on $H^{s/2}(\mathbb{R})$.
Our main goal is then to obtain a solution of the minimization problem
\begin{equation}
\label{general_min_problem}
	\inf _{u \in U_q} \mathcal E(u)
	\quad
	\mbox{ with }
	\quad
	U_q =
	\left\{
		u \in H^{s/2}(\mathbb R) \ \colon \mathcal Q(u) = q
	\right\}
	.
\end{equation}
For convenience we separate $\mathcal E$ into the functionals
\begin{align*}
\mathcal{L}(u)&=\frac{1}{2}\int_\mathbb{R}uLu\ \mathrm{d}x,\\
\mathcal{N}_c(u)&=\frac{1}{2}\int_\mathbb{R}L^{1/2}u(L^{-1/2}u)^2\ \mathrm{d}x,\\
\mathcal{N}_r(u)&=\frac{1}{2}\int_\mathbb{R}\frac{(L^{-1/2}u)^4}{4}\ \mathrm{d}x
\end{align*}
so that
\begin{equation*}
	\mathcal{E}(u) = \mathcal{L}(u) + \mathcal{N}(u)
\end{equation*}
where
\[
	\mathcal N(u) = \mathcal{N}_c(u) + \mathcal{N}_r(u)
	.
\]

We are now ready to state our main results.

\begin{theorem}\label{main-res-1}
Let $D_q$ be the set of minimizers of $\mathcal{E}$ over $U_q$. There exists $q_0>0$ such that for each $q\in (0,q_0)$, the set $D_q$ is nonempty and $\norm{u}_{H^\frac{s}{2}}^2\lesssim q$ for $u\in D_q$.
Each element of $D_q$ is a solution of the Euler--Lagrange equation
\begin{equation}\label{travelling-L}
\lambda u+L^{-1/2}\left(\frac{(L^{-1/2}u)^3}{2}\right)+L^{-1/2}\big((L^{1/2}u)(L^{-1/2}u)\big)+L^{1/2}\left(\frac{(L^{-1/2}u)^2}{2}\right)+Lu=0.
\end{equation}
The Lagrange multiplier $\lambda$ satisfies 
\begin{equation}\label{crude-est-lambda}
\frac{m(0)}{2}< -\lambda< m(0)-Dq^\beta,
\end{equation}
where $\beta=\frac{s'}{2s'-1}$ and $D$ is a positive constant.
\end{theorem}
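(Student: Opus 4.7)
The plan is to apply Lions's concentration--compactness principle to the constrained minimization problem \eqref{general_min_problem}, following the general scheme used in \cite{Arnesen,Maehlen} for scalar equations but adapted to the present quartic functional $\mathcal E$.

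\textbf{Step 1 (Functional analytic setup).} Since $m(\xi)\gtrsim \langle\xi\rangle^s$ by \eqref{m1}--\eqref{m2} and $m(0)>0$, the quadratic form $\mathcal L$ is equivalent to the squared $H^{s/2}$--norm, so minimizing sequences will be bounded in $H^{s/2}$ once $\mathcal E$ is shown to be coercive on $U_q$ for small $q$. The non-local cubic and quartic nonlinearities $\mathcal N_c$ and $\mathcal N_r$ must be controlled using the mapping properties of $L^{\pm 1/2}$ together with Sobolev embedding; here \eqref{int-cond}--\eqref{int-cond2} enter so that $L^{-1/2}$, via Young's inequality applied to its kernel, lifts $L^2$-integrability and allows estimates of the schematic form
\[
	\lvert \mathcal N(u)\rvert
	\lesssim \|u\|_{H^{s/2}}^3 + \|u\|_{H^{s/2}}^4.
\]
Coercivity of $\mathcal E$ on $U_q$ for $q$ small follows at once.

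\textbf{Step 2 (Sharp two-sided bounds on $I_q:=\inf_{U_q}\mathcal E$).} The lower bound $\mathcal L(u)\geq m(0)\,\mathcal Q(u)=m(0)\,q$ is trivial and, after absorbing the nonlinearity, gives $I_q\geq m(0)q/2$ for small $q$. For the upper bound I would test $\mathcal E$ on a rescaled long-wave profile $u_\delta(x)=\delta^{\alpha}\phi(\delta x)$ with $\phi$ smooth, and $\alpha,\delta>0$ chosen so that $\mathcal Q(u_\delta)=q$; expanding using the near-zero symbol asymptotics $m(\xi)=m(0)+O(|\xi|^{s'})$ from \eqref{m1} and optimizing the scaling should yield the crucial refined estimate
\[
	I_q \leq m(0)\,q - D\,q^{1+\beta},
	\qquad \beta=\tfrac{s'}{2s'-1},
\]
with the exponent $\beta$ coming precisely from balancing the linear sub-leading contribution (of order $\delta^{s'}$) against the nonlinear contribution.

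\textbf{Step 3 (Concentration--compactness).} For a minimizing sequence $\{u_n\}\subset U_q$, apply Lions's lemma to the densities $u_n^2$. Vanishing is excluded because it would force $\mathcal N(u_n)\to 0$, giving $\liminf \mathcal E(u_n)\geq m(0)q$, contradicting the upper bound of Step 2. Dichotomy is excluded by strict subadditivity $I_{q_1+q_2}<I_{q_1}+I_{q_2}$, which follows from the super-linear correction $-Dq^{1+\beta}$ in Step 2 together with a scaling argument. Therefore tightness holds, and after a translation $u_n(\cdot+x_n)\rightharpoonup u$ in $H^{s/2}$, with $\mathcal Q(u)=q$. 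Strong local $L^p$ convergence (together with the mapping properties of $L^{\pm 1/2}$ ensuring the non-local nonlinear terms pass to the limit) and the weak lower semicontinuity of $\mathcal L$ give $\mathcal E(u)\leq I_q$, so $u\in D_q$. The a priori size bound $\|u\|_{H^{s/2}}^2\lesssim q$ is inherited from Step 1 applied to the minimizer itself.

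\textbf{Step 4 (Euler--Lagrange equation and bounds on $\lambda$).} Standard Lagrange multiplier theory yields \eqref{travelling-L}. Testing that equation against $u$ and combining it with the representation \eqref{general_E_definition} of $\mathcal E$ and the sharp two-sided estimate on $I_q$ from Step 2 produces \eqref{crude-est-lambda} by straightforward algebraic manipulation.

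The main obstacle is the cubic term $\mathcal N_c$: it contains the positive-order operator $L^{1/2}$, so it is not a subcritical perturbation of the quadratic part in the usual Sobolev sense. Controlling this term uniformly along minimizing sequences, and demonstrating that it is stable under the translations used in the concentration--compactness step, is what forces the somewhat technical kernel conditions \eqref{int-cond}--\eqref{int-cond2} and is the delicate point of the argument. A secondary difficulty is producing the upper bound of Step 2 with exactly the exponent $\beta=s'/(2s'-1)$; this requires a careful long-wave rescaling with matched contributions from the linear and nonlinear parts of $\mathcal E$.
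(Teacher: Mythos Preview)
Your outline is essentially the paper's own strategy: Sections 2--4 carry out exactly the concentration--compactness scheme you describe, including the long-wave ansatz yielding $I_q<m(0)q-Dq^{1+\beta}$ (Proposition~\ref{near-min}), strict subadditivity from that bound (Proposition~\ref{subadditivity}), exclusion of vanishing and dichotomy (Propositions~\ref{vanishing-prop}--\ref{D-prop}), existence via concentration (Proposition~\ref{C-prop}), and the algebraic manipulation of \eqref{lambda-est-calc} for the bounds on $\lambda$.

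Two points of your proposal deserve correction, both concerning \emph{where} the technical hypotheses are actually used. First, the kernel conditions \eqref{int-cond}--\eqref{int-cond2} are not needed for the basic control of $\mathcal N$ in Step~1; that follows from Sobolev embedding alone (Proposition~\ref{N-est}, which in fact gives the sharper mixed estimate $|\mathcal N_c(u)|\lesssim \|u\|_{L^2}^2\|u\|_{H^{s/2}}$ needed to close the near-minimizer bound cleanly). The kernel conditions enter \emph{only} in the vanishing step, and not through translations (the functional is translation-invariant, so nothing happens there): one must show that if $u_n$ vanishes in $L^2$ then so does $L^{-1/2}u_n$, so that $\|L^{-1/2}u_n\|_{L^\infty}\to 0$ and hence $\mathcal N_c(u_n)\to 0$ via Lemma~\ref{Nc-estimate}. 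This propagation of vanishing through the nonlocal operator $L^{-1/2}$ is where the convolution-kernel integrability \eqref{int-cond}--\eqref{int-cond2} is needed. Second, the dichotomy exclusion is more than a scaling argument: because $L^{\pm 1/2}$ do not commute with cutoffs, one also needs commutator estimates of the type in Lemma~\ref{commutator} (this is where the uniform continuity of $m(\xi)/\langle\xi\rangle^s$ is used) to show $\mathcal E(w_n^{(1)})+\mathcal E(w_n^{(2)})-\mathcal E(w_n)\to 0$.
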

Here and throughout the paper we write $f\lesssim g$, when $\abs{\frac{f}{g}}$ is uniformly bounded from above, and $f\simeq g$ when $f\lesssim g\lesssim f$. Our other main result concerns the asymptotic behavior of  travelling wave solutions of \eqref{wb-sys1}--\eqref{wb-sys2}.

\begin{theorem}
\label{main-res-2}
If $L=K$ defined by \eqref{K_definition}
then there exists $q_0>0$ such that for any
$q\in(0,q_0)$ each minimizer $u\in D_q$ belongs to $H^{r}(\mathbb{R})$
for any $r \geqslant 0$ with $\norm{u}_{H^{r}}^2\lesssim q$,
and moreover, it satisfies the following long wave asymptotics
\[
\sup_{u\in D_q}\inf_{x_0\in \mathbb{R}}
\norm{
	q^{-2/3}u(q^{-1/3}\cdot)-\psi_{\text{KdV}}(\cdot-x_0)
}	_{H^1(\mathbb{R})}\lesssim q^{1/6}
,
\]
whereas the corresponding surface elevation \eqref{eta-u}
and speed \eqref{v-u} satisfy
\begin{align*}
\sup_{u\in D_q}\inf_{x_0\in \mathbb{R}}
\norm{
	q^{-2/3}\eta_u(q^{-1/3}\cdot)
	+
	\psi_{\text{KdV}}(\cdot-x_0)
}_{H^{1/2}(\mathbb{R})}
\lesssim q^{1/6}
,\\
\sup_{u\in D_q}\inf_{x_0\in \mathbb{R}}
\norm{
	q^{-2/3} v_u(q^{-1/3}\cdot)
	+
	\psi_{\text{KdV}}(\cdot-x_0)
}_{H^{3/2}(\mathbb{R})}
\lesssim q^{1/6}
,
\end{align*}
where 
\begin{equation*}
\psi_{\text{KdV}}(x)=-\lambda_0\sech^2\left(\frac{1}{2}\sqrt{3\lambda_0}x\right)
\end{equation*}
and $\lambda_0=3/\sqrt[3]{16}$. In addition, the Lagrange multiplier $\lambda$ satisfies
\begin{equation*}
\lambda=-1+\lambda_0q^{2/3}+\mathcal{O}(q^{5/6}).
\end{equation*}
\end{theorem}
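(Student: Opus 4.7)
The plan is to split the argument into three pieces: a regularity bootstrap for the minimizers, a long-wave asymptotic expansion of $\mathcal{E}$ that identifies a KdV limit problem, and the transfer of these estimates back to $(\eta_u, v_u, \lambda)$.

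\textbf{Regularity.} Any $u \in D_q$ satisfies the Euler--Lagrange equation \eqref{travelling-L} and, by Theorem \ref{main-res-1}, lies in $H^{1/2}$ with $\norm{u}_{H^{1/2}}^2 \lesssim q$. Rewriting \eqref{travelling-L} as
\begin{equation*}
Ku = -\lambda u - L^{-1/2}\bigl(\tfrac12 (L^{-1/2}u)^3\bigr) - L^{-1/2}(L^{1/2}u\cdot L^{-1/2}u) - L^{1/2}\bigl(\tfrac12 (L^{-1/2}u)^2\bigr),
\end{equation*}
and using that $K$ is a Fourier multiplier of order $1$ equivalent to $J = (1-\partial_x^2)^{1/2}$ while $L^{\pm 1/2}$ shift regularity by at most $1/2$, inverting $K$ buys one derivative per iteration. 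Since $H^r$ is a Banach algebra for $r > 1/2$, the nonlinear terms remain in $H^r$ whenever the factors do, and a standard bootstrap promotes $u \in H^{1/2}$ to $u \in H^r$ for every $r \geqslant 0$, with each norm controlled by $\norm{u}_{H^{1/2}}^2 \lesssim q$ thanks to the homogeneity of the nonlinearity and the smallness of the initial datum.

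\textbf{Long-wave asymptotics.} I would introduce the KdV rescaling $u(x) = q^{2/3} v(q^{1/3} x)$, which sends $U_q$ to a fixed dilation of $U_1$. Using the Taylor expansions $m(\xi) = 1 + \xi^2/3 + \mathcal{O}(\xi^4)$ and $m^{\pm 1/2}(\xi) = 1 \pm \xi^2/6 + \mathcal{O}(\xi^4)$ and carefully collecting powers of $q$, one obtains
\begin{equation*}
\mathcal{E}(u) = q + q^{5/3}\!\left[\tfrac16 \norm{v'}_{L^2(\mathbb{R})}^2 + \tfrac12 \int_{\mathbb{R}} v^3 \dx \right] + \mathcal{O}(q^{7/3}),
\end{equation*}
so that to leading order minimization on $U_q$ reduces to minimizing $F(v) := \tfrac16\norm{v'}_{L^2}^2 + \tfrac12 \int v^3 \dx$ over $\{v : \mathcal{Q}(v) = 1\}$. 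The Euler--Lagrange equation $-\tfrac13 v'' + \tfrac32 v^2 + \lambda_0 v = 0$ is solved, up to translation, by the negative sech${}^2$ ground state $\psi_{\text{KdV}}$ with $\lambda_0 = 3/\sqrt[3]{16}$. An upper bound on $\inf_{U_q}\mathcal{E}$ is produced by testing against a rescaled $\psi_{\text{KdV}}$ adjusted to satisfy the constraint exactly; the matching lower bound comes from the same expansion applied to any minimizing sequence. Combined with the nondegeneracy of $\psi_{\text{KdV}}$ (its linearisation has a one-dimensional kernel generated by $\psi_{\text{KdV}}'$), a concentration-compactness argument in the spirit of \cite{Ehrnstrom_Groves_Wahlen} upgrades the energy comparison to the $H^1$-convergence of the rescaled minimizers to a translate of $\psi_{\text{KdV}}$, with the rate $q^{1/6}$ coming from the quantitative energy-norm coercivity inequality satisfied by $F$ near its ground state.

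\textbf{Surface elevation, velocity, and Lagrange multiplier.} The formulas \eqref{eta-u} and \eqref{v-u}, combined with the long-wave estimate $m^{\pm 1/2}(q^{1/3}\xi) = 1 + \mathcal{O}(q^{2/3}\xi^2)$, imply that $K^{\pm 1/2}$ act as the identity on the rescaled minimizers up to $\mathcal{O}(q^{2/3})$ corrections, which yields the claimed asymptotics for $\eta_u$ and $v_u$; the stronger Sobolev indices $1/2$ and $3/2$ reflect the smoothing of $K^{\pm 1/2}$ combined with the higher regularity of $u$ from Step 1. Finally, taking the $L^2$-inner product of \eqref{travelling-L} with $u$ produces a Pohozaev-type identity relating $\lambda q$ to a linear combination of $\mathcal{L}(u)$, $\mathcal{N}_c(u)$ and $\mathcal{N}_r(u)$; inserting the expansions from Step 2 and using the standard identities $\norm{\psi_{\text{KdV}}'}_{L^2}^2 = \tfrac{6\lambda_0}{5}$ and $\int \psi_{\text{KdV}}^3 \dx = -\tfrac{8\lambda_0}{5}$ extracts $\lambda = -1 + \lambda_0 q^{2/3} + \mathcal{O}(q^{5/6})$.

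The main obstacle will be the quantitative step converting closeness in energy into closeness in $H^1$ at the rate $q^{1/6}$. The nonlocal trilinear term $\int L^{-1/2}(L^{1/2}u \cdot L^{-1/2}u) \cdot \varphi \dx$, which entangles positive- and negative-order operators, must be shown to reduce to $\int u^2 \varphi \dx$ in the long-wave limit with errors that are uniform along minimizing sequences. Controlling these commutator-type errors together with the loss of coercivity coming from the nonpositive cubic piece of the KdV functional is the most delicate part of the argument.
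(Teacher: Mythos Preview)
Your overall strategy matches the paper's, but there is a genuine gap between your Step~1 and Step~2 that the paper fills with an essential intermediate lemma (Lemma~\ref{higher-regularity-est-lemma}) you have not accounted for.

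Your bootstrap yields only $\norm{u}_{H^r}^2 \lesssim q$ for every $r$. If you then set $u(x)=q^{2/3}v(q^{1/3}x)$, a direct computation gives $\norm{v'}_{L^2}^2 = q^{-5/3}\norm{\partial_x u}_{L^2}^2$ and $\norm{v''}_{L^2}^2 = q^{-7/3}\norm{\partial_x^2 u}_{L^2}^2$. With only $\norm{\partial_x u}_{L^2}^2,\norm{\partial_x^2 u}_{L^2}^2 \lesssim q$ these blow up like $q^{-2/3}$ and $q^{-4/3}$, so the rescaled function is not a~priori bounded in $H^1$, and the Taylor remainder in your expansion $\mathcal{E}(u)=q+q^{5/3}F(v)+\mathcal{O}(q^{7/3})$ is not controlled. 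The upper bound from testing with $S_{\text{KdV}}(\psi_{\text{KdV}})$ is fine, but the matching lower bound---and hence the entire comparison argument---collapses. What is needed are the \emph{refined} estimates
\[
\norm{u}_{L^\infty}\lesssim q^{2/3},\qquad \norm{\partial_x u}_{L^2}^2\lesssim q^{5/3},\qquad \norm{\partial_x^2 u}_{L^2}^2\lesssim q^{7/3},
\]
which say precisely that the rescaled minimizer is bounded in $H^2$. The paper obtains these by writing the Euler--Lagrange equation as $(\lambda+K)u=-M(u)$ and exploiting the quantitative spectral gap $\lambda+1>Dq^{2/3}$ from Theorem~\ref{main-res-1}: since $m(\xi)-1\simeq\xi^2$ for $|\xi|\le1$, the multiplier $(\lambda+m(\xi))^{-1}$ is bounded by $(Dq^{2/3}+\xi^2)^{-1}$ on low frequencies, and this is what produces the correct powers of~$q$. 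This step is not a routine bootstrap and is the real engine behind the $q^{1/6}$ rate.

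Once those refined bounds are in place, the rest of your outline is essentially what the paper does: the decomposition $\mathcal{E}=\mathcal{Q}+\mathcal{E}_{\text{KdV}}+\mathcal{E}_{\text{rem}}$ with $|\mathcal{E}_{\text{rem}}(u)|\lesssim q^2$, the energy comparison $I_q=q+q^{5/3}I_{\text{KdV}}+\mathcal{O}(q^2)$, the $H^1$ convergence (which the paper imports from \cite{Duchene_Nilsson_Wahlen} rather than redoing a coercivity argument), and the Pohozaev identity for $\lambda$. Your identification of the trilinear commutator as the main obstacle is misplaced; that term is handled by the same refined derivative bounds, and the genuine difficulty sits one step earlier.
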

 We discuss here briefly how to prove
Theorems \ref{main-res-1}, \ref{main-res-2}. 
The main ingredient in proving Theorem \ref{main-res-1} is Lion's concentration compactness theorem \cite{Lions}:
\begin{theorem}[Concentration-compactness]
\label{T.concentration-compactness}
Any sequence $\{e_n\}_{n\in\mathbb{N}}\subset L^1(\mathbb{R})$ of non-negative functions such that
\[\lim_{n\to \infty}\int_{\mathbb{R}}e_n\ \mathrm{d} x= I>0\]
admits a subsequence, denoted again $\{e_n\}_{n\in\mathbb{N}}$, for which one of the following phenomena occurs.
\begin{itemize}
\item (Vanishing) For each $r>0$, one has
\[\lim_{n\to\infty} \left( \sup_{x\in\mathbb{R}}\int_{x-r}^{x+r}e_n\ \mathrm{d} x\right)=0.\]
\item (Dichotomy) There are real sequences $\{x_n\}_{n\in\mathbb{N}},\{M_n\}_{n\in\mathbb{N}},\{N_n\}_{n\in\mathbb{N}}\subset\mathbb{R}$ and $I^* \in(0,I)$ such that $M_n,N_n\to\infty$, $M_n/N_n\to 0$, and
\[\int_{x_n-M_n}^{x_n+M_n}e_n\ \mathrm{d} x\to I^* \quad \text{ and } \quad  \int_{x_n-N_n}^{x_n+N_n}e_n\ \mathrm{d} x\to I^*,  \]
as $n\to\infty$.
\item (Concentration) There exists a sequence $\{x_n\}_{n\in\mathbb{N}}\subset\mathbb{R}$ with the property that for each $\epsilon>0$, there exists $r>0$ with 
\[\int_{x_n-r}^{x_n+r} e_n\ \mathbb{d} x \geq I-\epsilon,\]
for all $n\in\mathbb{N}$.
\end{itemize}
\end{theorem}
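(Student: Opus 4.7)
The plan is to proceed via the Lévy concentration function
\[
Q_n(r) = \sup_{x \in \mathbb{R}} \int_{x-r}^{x+r} e_n(y)\, \mathrm{d}y, \qquad r > 0.
\]
For each $n$ the map $r \mapsto Q_n(r)$ is non-decreasing and bounded by $\|e_n\|_{L^1}$, which converges to $I$. Hence $\{Q_n\}$ is a uniformly bounded sequence of monotone functions, and by Helly's selection theorem I extract a subsequence (not relabelled) with $Q_n(r) \to Q(r)$ pointwise for some non-decreasing $Q \colon (0,\infty) \to [0,I]$. Setting
\[
\alpha := \lim_{r \to \infty} Q(r) \in [0,I],
\]
I split the analysis into the three cases $\alpha = 0$, $\alpha = I$, and $0 < \alpha < I$, which correspond respectively to vanishing, concentration, and dichotomy.

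The case $\alpha = 0$ is immediate: for each fixed $r > 0$, $Q_n(r) \to Q(r) \leq \alpha = 0$, which is exactly the vanishing alternative. For concentration ($\alpha = I$), fix $\varepsilon > 0$ and pick $R_\varepsilon$ so that $Q(R_\varepsilon) > I - \varepsilon/2$; then for $n$ large one has $Q_n(R_\varepsilon) > I - \varepsilon$ and may select $x_n^\varepsilon$ with $\int_{x_n^\varepsilon - R_\varepsilon}^{x_n^\varepsilon + R_\varepsilon} e_n \geq I - \varepsilon$. The key observation here is that, since $\|e_n\|_{L^1} \to I$, two balls each carrying mass at least $I - \varepsilon$ cannot be disjoint for small $\varepsilon$, so the centers chosen at different scales $\varepsilon_k \to 0$ must lie within $2R_{\varepsilon_k}$ of each other; a diagonal procedure then produces a single sequence $\{x_n\}$ realizing concentration.

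The main work lies in the dichotomy case $0 < \alpha < I$. Set $I^* := \alpha$ and pick $R_k \nearrow \infty$ with $Q(R_k) \nearrow \alpha$. For each fixed $k$ there exist $y_{n,k}$ with $\int_{y_{n,k} - R_k}^{y_{n,k} + R_k} e_n \to \alpha$ as $n \to \infty$. The bound $Q(r) \leq \alpha$ forces $\int_{y - r}^{y + r} e_n \leq \alpha + o(1)$ uniformly in $y$, so two centers $y_{n,k}$ and $y_{n,k+1}$ cannot be well-separated — otherwise the union of their balls would capture mass close to $2\alpha > \alpha$, contradicting the limit. This rigidity permits me to set $x_n := y_{n,1}$ and, by enlarging $R_k$ in a controlled way, conclude
\[
\int_{x_n - R_k}^{x_n + R_k} e_n \longrightarrow \alpha \quad \text{as } n \to \infty
\]
for every fixed $k$. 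A standard diagonal extraction then produces $M_n, N_n \to \infty$ with $M_n / N_n \to 0$ and both $\int_{x_n - M_n}^{x_n + M_n} e_n$ and $\int_{x_n - N_n}^{x_n + N_n} e_n$ tending to $I^*$.

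The principal obstacle is the dichotomy case: producing a \emph{single} sequence $\{x_n\}$ that serves all radii simultaneously, together with two scales $M_n \ll N_n$ between which essentially no mass sits. This is exactly where the rigidity argument above is essential — once a ball of radius $R_k$ around $y_{n,k}$ carries mass close to $\alpha$, no disjoint ball can carry more than $o(1)$ mass, pinning the centers together up to bounded shifts absorbed into $R_k$. The vanishing and concentration alternatives, by contrast, are comparatively direct consequences of the pointwise limit $Q_n \to Q$ and the value of $\alpha$.
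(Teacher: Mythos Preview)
The paper does not prove this theorem; it is stated as Lions' classical concentration--compactness lemma and simply cited. Your approach via the L\'evy concentration functions $Q_n$ and Helly's selection theorem is precisely Lions' original method, so there is nothing in the paper to compare against---you are reconstructing the standard proof.

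Your treatment of the vanishing and concentration alternatives is correct. The dichotomy argument, however, has a gap. You assert that the centers $y_{n,k}$ and $y_{n,k+1}$ cannot be well separated because ``the union of their balls would capture mass close to $2\alpha > \alpha$, contradicting the limit''. But two disjoint balls each carrying mass near $\alpha$ only give total mass near $2\alpha$, and this contradicts $\|e_n\|_{L^1}\to I$ only when $\alpha > I/2$; for $\alpha \le I/2$ there is no contradiction. Nor can you cover both balls by a single ball of \emph{fixed} radius to invoke $Q_n(r)\to Q(r)\le\alpha$, since the separation of the centers may grow with $n$. The rigidity you appeal to is therefore not established in general, and the step ``set $x_n:=y_{n,1}$ and conclude $\int_{x_n-R_k}^{x_n+R_k}e_n\to\alpha$ for every $k$'' is unjustified.

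The repair is that the centers need not be pinned together at all. For each $k$ choose $R_k$ with $Q(R_k)>\alpha-1/k$ and set, say, $R_k'=kR_k$; then pick $n_k$ so large that both $Q_{n_k}(R_k)$ and $Q_{n_k}(R_k')$ lie within $1/k$ of their pointwise limits, and choose $x_{n_k}$ nearly achieving the supremum $Q_{n_k}(R_k)$. Monotonicity in the radius then traps $\int_{x_{n_k}-R_k'}^{x_{n_k}+R_k'}e_{n_k}$ between $\alpha-O(1/k)$ and $Q_{n_k}(R_k')\le\alpha+1/k$, and after relabelling the subsequence one may take $M_n=R_k$, $N_n=R_k'$.
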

We will apply this theorem to $e_n=u_n^2$, where $\{u_n\}_{n=1}^\infty$ is a minimizing sequence, and show that the vanishing and dichotomy scenarios cannot occur.
Then we obtain a convergent subsequence of $\{u_n\}_{n=1}^\infty$
using the concentration scenario.
The functional $\mathcal{E}$ is similar to the corresponding functionals appearing in \cite{Duchene_Nilsson_Wahlen, Nilsson_Wang}, in the sense that the Fourier multiplier and the nonlinerity are entangled.
However, in contrast with \cite{Duchene_Nilsson_Wahlen, Nilsson_Wang},
our functional $\mathcal{E}$ is 
bounded from below,
hence the penalization argument
of  \cite{Buffoni2004, Groves_Wahlen2011}
is not necessary in our case.

In \cite{Duchene_Nilsson_Wahlen} the exclusion of dichotomy
gets more technical due to the entanglement of
the Fourier multiplier with the nonlinearity,
and this is true for the present work as well.
In contrast, the exclusion of the vanishing scenario is straightforward
in \cite{Duchene_Nilsson_Wahlen},
while this is not the case in the present work.
This is due to the fact that in \cite{Duchene_Nilsson_Wahlen}
the constrained minimization problem is formulated in $H^s(\mathbb{R})$,
$s> 1/2$, allowing the use of the embedding
$H^s(\mathbb{R})\hookrightarrow L^\infty(\mathbb{R})$,
while our problem is formulated in $H^{s/2}(\mathbb{R})$,
preventing the use of this embedding.
Instead we show that if $\{u_n\}_{n=1}^\infty$ is vanishing,
then $L^{-1/2}u_n$ is vanishing as well,
which leads to a contradiction.
In order to show that $L^{-1/2}u_n$ is vanishing we make use
of the integrability assumptions \eqref{int-cond}, \eqref{int-cond2}
imposed on the kernel of $L$,
and this is the only instance where these assumptions are used.
Our other assumptions on $L$ as are similar to those in \cite{Maehlen},
and we are able to adopt many of the methods used in that paper
to our present work.
We give here a brief explanation on where the remaining assumptions
on $m$ in Definition \ref{def-admissible} are used.
The uniform continuity of $m(\xi)/\langle \xi\rangle^s$
is used to prove Lemma \ref{uniform-cont},
which in turn is used to exclude the dichotomy scenario.
The upper bound in \eqref{m1} is used to prove
Proposition \ref{near-min},
which allows us to define near minimizers.
In Proposition \ref{near-min} we also make use of the lower bound
on $s'$, which tells us that the number $\beta = s'/(2s'-1)$
is strictly less than $1$.
The latter and
the lower bound in \eqref{m1} are used to prove subadditivity
of $I_q$ in Proposition \ref{subadditivity}.
The lower bound in \eqref{m2} essentially gives us coercivity
and enables us to obtain a uniform upper bound on near minimizers
in $H^{s/2}$-norm, in Proposition \ref{near-min-est-prop}.
The lower bound on $s$ is needed when excluding
the vanishing scenario in Proposition \ref{vanishing-prop}.
Here we make use of the fact that $L^{-1/2}u \in H^s$,
for $u\in H^{s/2}$, and since $s>1/2$ we can conclude
that $L^{-1/2}u \in L^\infty$.
Finally, the upper bound in \eqref{m2} is essentially used to estimate
$\norm{L^{1/2}u_n}_{L^2}\lesssim \norm{u_n}_{H^{s/2}}$,
where $u_n$ is a minimizing sequence,
and we know from before that $u_n$ is uniformly bounded
in $H^{s/2}$-norm.
Hence $\norm{L^{1/2}u_n}_{L^2}$ is also uniformly bounded,
and this fact is used in Proposition \ref{D-prop} to exclude dichotomy,
and also in Proposition \ref{C-prop} while
proving existence of a minimizer.

Theorem \ref{main-res-2} is established using more standard arguments,
see for example \cite{Duchene_Nilsson_Wahlen, Ehrnstrom_Groves_Wahlen}.

\section{Technical results}\label{technical-results}

The current section is devoted to the general properties
of the functionals introduced above.
We start with a useful proposition on continuity
of symbol $m(\xi)$ described by Definition \ref{admissible}.
\begin{lemma}\label{uniform-cont}
There is a function $\omega\ :\ \mathbb{R}\rightarrow [0,\infty)$, bounded from above by a polynomial, with $\lim_{\lambda\rightarrow 0}\omega(\lambda)=0$, such that
\begin{equation*}
\abs{m(\xi)-m(\eta)}\leq \omega(\xi-\eta)\langle\xi\rangle^\frac{s}{2}\langle \eta\rangle^\frac{s}{2}.
\end{equation*} 
\end{lemma}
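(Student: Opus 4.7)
The plan is to reduce the estimate to two regimes, $|\xi-\eta|\leq 1$ and $|\xi-\eta|>1$, and build $\omega$ piecewise. First I would record two consequences of the hypotheses in Definition \ref{def-admissible}: (a) the function $\phi(\xi):=m(\xi)/\langle\xi\rangle^s$ is not only uniformly continuous but also bounded (it is continuous on $[-1,1]$ and on $|\xi|>1$ it is pinched between positive constants by \eqref{m2} together with $m(0)>0$); let $\tilde\omega$ denote a modulus of continuity for $\phi$, which may be taken non-decreasing, subadditive, vanishing at $0$, and dominated by an affine function since $\phi$ is bounded; (b) the global bound $|m(\xi)|\lesssim\langle\xi\rangle^s$, which also follows from \eqref{m2} and boundedness of $\phi$.

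For $|\xi-\eta|\leq 1$ I would split
\[
m(\xi)-m(\eta)=\bigl(\phi(\xi)-\phi(\eta)\bigr)\langle\xi\rangle^s+\phi(\eta)\bigl(\langle\xi\rangle^s-\langle\eta\rangle^s\bigr).
\]
Peetre's inequality gives $\langle\xi\rangle^s\simeq\langle\eta\rangle^s\simeq\langle\xi\rangle^{s/2}\langle\eta\rangle^{s/2}$ in this regime, which together with the uniform continuity estimate $|\phi(\xi)-\phi(\eta)|\leq\tilde\omega(\xi-\eta)$ handles the first summand with modulus $\tilde\omega(\xi-\eta)$ up to a constant. For the second summand, boundedness of $\phi$ and the mean value theorem give
\[
|\langle\xi\rangle^s-\langle\eta\rangle^s|\leq C|\xi-\eta|\bigl(\langle\xi\rangle^{s-1}+\langle\eta\rangle^{s-1}\bigr),
\]
and invoking $\langle\xi\rangle\simeq\langle\eta\rangle$ on $|\xi-\eta|\leq 1$ I can further bound $\langle\xi\rangle^{s-1}+\langle\eta\rangle^{s-1}\lesssim\langle\xi\rangle^{(s-1)/2}\langle\eta\rangle^{(s-1)/2}\leq\langle\xi\rangle^{s/2}\langle\eta\rangle^{s/2}$ since $\langle\xi\rangle,\langle\eta\rangle\geq 1$. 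Thus the second term contributes $C|\xi-\eta|\langle\xi\rangle^{s/2}\langle\eta\rangle^{s/2}$, also with a modulus vanishing at $0$.

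For $|\xi-\eta|>1$ I drop the cancellation and use the crude bound
\[
|m(\xi)-m(\eta)|\lesssim \langle\xi\rangle^s+\langle\eta\rangle^s\leq \bigl(\langle\xi\rangle+\langle\eta\rangle\bigr)^s.
\]
Peetre's inequality $\langle\xi\rangle\leq\langle\xi-\eta\rangle\langle\eta\rangle$ and its symmetric counterpart give $(\langle\xi\rangle+\langle\eta\rangle)^2\lesssim\langle\xi-\eta\rangle^2\langle\xi\rangle\langle\eta\rangle$, so
\[
|m(\xi)-m(\eta)|\lesssim \langle\xi-\eta\rangle^s\langle\xi\rangle^{s/2}\langle\eta\rangle^{s/2},
\]
which is precisely the claimed form with a polynomial factor.

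Assembling, I would take $\omega(\lambda):=C\bigl(\tilde\omega(\lambda)+|\lambda|\bigr)$ for $|\lambda|\leq 1$ and $\omega(\lambda):=C\langle\lambda\rangle^s$ for $|\lambda|>1$; after possibly replacing by the maximum of the two on the overlap and monotonising, this $\omega$ is bounded above by a polynomial and satisfies $\omega(\lambda)\to 0$ as $\lambda\to 0$. The only non-routine point is the use of uniform continuity of $\phi$ (rather than of $m$ itself) to avoid growth in the small-$|\xi-\eta|$ regime; the large $|\xi-\eta|$ regime is purely an application of Peetre. I do not foresee any real obstacle beyond keeping track of these two cases.
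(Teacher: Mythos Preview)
Your argument is essentially correct and, since the paper defers the proof entirely to \cite[Proposition 2.1]{Maehlen}, you are supplying what the paper itself does not. The decomposition via $\phi=m/\langle\cdot\rangle^s$ in the small-difference regime and the Peetre argument in the large-difference regime is exactly the standard route.

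One small slip: in the regime $|\xi-\eta|>1$ you write $\langle\xi\rangle^s+\langle\eta\rangle^s\leq(\langle\xi\rangle+\langle\eta\rangle)^s$, which holds only for $s\geq 1$, whereas Definition~\ref{def-admissible} allows $s\in(1/2,1)$. This is harmless for the conclusion: just bypass that step and apply Peetre directly to each summand,
\[
\langle\xi\rangle^s=\langle\xi\rangle^{s/2}\langle\xi\rangle^{s/2}\lesssim\langle\xi-\eta\rangle^{s/2}\langle\eta\rangle^{s/2}\langle\xi\rangle^{s/2},
\]
and symmetrically for $\langle\eta\rangle^s$, which gives the desired bound (with the even smaller polynomial factor $\langle\xi-\eta\rangle^{s/2}$). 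With this correction the proof goes through for all admissible $s>1/2$.
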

\begin{proof}
The proof is given in \cite[Proposition 2.1]{Maehlen}.
\end{proof}

The following functional estimates will be used a lot
in the text below,
sometimes without references.

\begin{proposition}\label{L-est}
For any $u\in H^{s/2}(\mathbb{R})$ one has
\begin{equation*}
\mathcal{L}(u)\simeq \norm{u}_{H^{s/2}}^2.
\end{equation*}
\end{proposition}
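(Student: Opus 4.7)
The plan is to argue directly on the Fourier side using Plancherel and then verify that the symbol $m$ is pointwise comparable to $\langle\xi\rangle^s$ on all of $\mathbb{R}$, with constants depending only on $m(0)$ and the constants implicit in the admissibility conditions \eqref{m1}--\eqref{m2}.

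First I would write
\[
	\mathcal L(u)
	=
	\frac{1}{2} \int_{\mathbb R} uLu \dx
	=
	\frac{1}{4\pi} \int_{\mathbb R} m(\xi) \abs{\widehat u(\xi)}^2 d\xi
\]
by Plancherel's theorem (noting $m$ is real-valued since it is even and the Fourier transform of a real kernel is the relevant object; in any case $m$ is positive). Since
\[
	\norm{u}_{H^{s/2}}^2
	\simeq
	\int_{\mathbb R} \langle \xi \rangle^s \abs{\widehat u(\xi)}^2 d\xi,
\]
the claim reduces to showing $m(\xi) \simeq \langle\xi\rangle^s$ uniformly in $\xi \in \mathbb{R}$.

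Next I would split into the two regimes used in Definition \ref{def-admissible}. For $\abs{\xi} \leqslant 1$, the upper bound in \eqref{m1} gives $m(0) \leqslant m(\xi) \leqslant m(0) + C$, while $\langle\xi\rangle^s \in [1, 2^{s/2}]$; since $m(0) > 0$, this yields $m(\xi) \simeq 1 \simeq \langle\xi\rangle^s$ on this range. For $\abs{\xi} > 1$, the two-sided comparison \eqref{m2} gives $m(\xi) = m(0) + (m(\xi) - m(0)) \simeq m(0) + \abs{\xi}^s \simeq \abs{\xi}^s \simeq \langle\xi\rangle^s$ (since $\abs{\xi}^s$ dominates $m(0)$ up to a constant factor for $\abs{\xi} > 1$). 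Combining both regimes gives constants $c, C > 0$, depending only on $m(0)$ and the constants in \eqref{m1}--\eqref{m2}, such that
\[
	c \langle\xi\rangle^s \leqslant m(\xi) \leqslant C \langle\xi\rangle^s
	\quad \text{for all } \xi \in \mathbb{R}.
\]

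Inserting this into the Fourier representation of $\mathcal L(u)$ yields the equivalence
\[
	\mathcal L(u) \simeq \int_{\mathbb R} \langle\xi\rangle^s \abs{\widehat u(\xi)}^2 d\xi \simeq \norm{u}_{H^{s/2}}^2,
\]
as desired. There is no real obstacle here: the entire argument is a routine Plancherel plus a two-case comparison; the only point to be slightly careful about is to note that the uniform continuity of $m/\langle\xi\rangle^s$ is not needed for this proposition (it is used elsewhere), and that the smallness bound $m(\xi) - m(0) \lesssim \abs{\xi}^{s'}$ near the origin is used only to keep $m$ bounded above on $[-1,1]$, which could alternatively be extracted from the uniform continuity assumption.
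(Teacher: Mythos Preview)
Your argument is correct and is precisely the routine Plancherel-plus-symbol-comparison that the paper has in mind; the paper's own proof is simply ``This is immediate from Definition \ref{admissible}.'' Your expansion is a faithful unpacking of that one-line remark, so there is nothing to add.
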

\begin{proof}
This is immediate from Definition \ref{admissible}.
\end{proof}
\begin{proposition}\label{N-est}
For any $s>1/2$ and $u\in H^{s/2}(\mathbb{R})$ one has
\begin{align}
\abs{\mathcal{N}_c(u)}&\lesssim\norm{u}_{L^2}^2\norm{u}_{H^{s/2}},\label{N_c-1}\\
\abs{\mathcal{N}_r(u)}&\lesssim \norm{u}_{L^2}^4.\label{N_r-1}
\end{align}
\end{proposition}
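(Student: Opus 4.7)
The plan is to reduce both estimates to a single key fact: for $s>1/2$, the operator $L^{-1/2}$ acts boundedly from $L^2(\mathbb R)$ into $L^4(\mathbb R)$. Granted this, \eqref{N_c-1} and \eqref{N_r-1} will follow immediately by Cauchy--Schwarz (for $\mathcal N_c$) and direct estimation (for $\mathcal N_r$), without touching the nonlinearities themselves in any delicate way.

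\medskip
\noindent\textbf{Step 1: the key bound} $\|L^{-1/2}u\|_{L^4}\lesssim \|u\|_{L^2}$. In one dimension, Sobolev embedding gives $H^{1/4}(\mathbb R)\hookrightarrow L^4(\mathbb R)$, so it is enough to prove $\|L^{-1/2}u\|_{H^{1/4}}\lesssim \|u\|_{L^2}$. Writing this in Fourier variables, I need the symbol $\langle\xi\rangle^{1/4} m(\xi)^{-1/2}$ to be uniformly bounded on $\mathbb R$. The even function $m$ is continuous and satisfies $m(\xi)\geqslant m(0)>0$ on $[-1,1]$, so the symbol is bounded on $[-1,1]$. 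For $|\xi|>1$, the lower bound $m(\xi)-m(0)\simeq \abs{\xi}^s$ from \eqref{m2} together with positivity of $m(0)$ yields $m(\xi)\gtrsim \langle\xi\rangle^s$, so $\langle\xi\rangle^{1/4}m(\xi)^{-1/2}\lesssim \langle\xi\rangle^{1/4-s/2}$, which is bounded precisely because $s>1/2$. Combining the two regions gives the claim.

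\medskip
\noindent\textbf{Step 2: the estimate for} $\mathcal N_r$. Directly,
\begin{equation*}
\abs{\mathcal N_r(u)}
\,=\,\tfrac{1}{8}\int_{\mathbb R}(L^{-1/2}u)^4\dx
\,=\,\tfrac{1}{8}\norm{L^{-1/2}u}_{L^4}^4
\,\lesssim\,\norm{u}_{L^2}^4,
\end{equation*}
by Step 1, giving \eqref{N_r-1}.

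\medskip
\noindent\textbf{Step 3: the estimate for} $\mathcal N_c$. By Cauchy--Schwarz,
\begin{equation*}
\abs{\mathcal N_c(u)}
\,\leqslant\,\tfrac{1}{2}\,\norm{L^{1/2}u}_{L^2}\,\norm{(L^{-1/2}u)^2}_{L^2}
\,=\,\tfrac{1}{2}\,\norm{L^{1/2}u}_{L^2}\,\norm{L^{-1/2}u}_{L^4}^2.
\end{equation*}
The second factor is controlled by $\norm{u}_{L^2}^2$ by Step 1. For the first factor, the upper bounds in \eqref{m1}--\eqref{m2} together with continuity of $m$ give $m(\xi)\lesssim \langle\xi\rangle^s$, hence $\norm{L^{1/2}u}_{L^2}^2 \simeq \int m(\xi)|\widehat u(\xi)|^2\,\mathrm d\xi\lesssim \norm{u}_{H^{s/2}}^2$ (in fact this is already contained in Proposition \ref{L-est}). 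Collecting yields \eqref{N_c-1}.

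\medskip
No serious obstacle is expected: the only delicate point is choosing the intermediate Sobolev index $1/4$ in Step 1 so that the gain $s/2$ from $L^{-1/2}$ (available because $s>1/2$) just exceeds the loss $1/4$ needed for the 1D embedding $H^{1/4}\hookrightarrow L^4$. Everything else is a direct application of Cauchy--Schwarz and the size assumptions on the symbol $m$ in Definition \ref{def-admissible}.
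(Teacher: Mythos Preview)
Your proof is correct and follows essentially the same approach as the paper: both derive the key bound $\|L^{-1/2}u\|_{L^4}\lesssim\|u\|_{L^2}$ from the one-dimensional Sobolev embedding $H^{1/4}\hookrightarrow L^4$ together with the symbol estimate $m(\xi)\gtrsim\langle\xi\rangle^{s}$ (valid since $s>1/2$), then deduce \eqref{N_r-1} directly and \eqref{N_c-1} via H\"older/Cauchy--Schwarz and $\|L^{1/2}u\|_{L^2}\lesssim\|u\|_{H^{s/2}}$. The only cosmetic difference is that the paper writes the embedding via $|\partial_x|^{1/4}$ rather than the inhomogeneous $H^{1/4}$, and is slightly terser in Step~3.
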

\begin{proof}
Inequality \eqref{N_r-1} follows from the Sobolev embedding
\begin{equation*}
\abs{\mathcal{N}_r(u)}=\frac{1}{8}\norm{L^{-1/2}u}_{L^4}^4\lesssim \norm{\abs{\partial_x}^{1/4}L^{-1/2}u}_{L^2}^4\lesssim \norm{J^{1/4-s/2}u}_{L^2}^4\lesssim\norm{u}_{L^2}^4.
\end{equation*}
Inequality \eqref{N_c-1} follows from \eqref{N_r-1} and Hölder's inequality.
\end{proof}
\begin{proposition}\label{diff-est}
For $s>1/2$ and $u,h\in H^\frac{s}{2}(\mathbb{R})$
the Fr\'echet derivative of $\mathcal{E}$ satisfies
\begin{equation*}
\abs{\mathrm{d}\mathcal{E}(u)(h)}\lesssim \norm{u}_{H^\frac{s}{2}}(1+\norm{u}_{L^2}+\norm{u}_{L^2}^2)\norm{h}_{H^\frac{s}{2}}
\end{equation*}
\end{proposition}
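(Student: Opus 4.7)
The plan is to decompose $\mathcal{E} = \mathcal{L} + \mathcal{N}_c + \mathcal{N}_r$, compute each Fréchet derivative explicitly, and bound each piece by the appropriate power of $\|u\|_{L^2}$ times $\|u\|_{H^{s/2}}\|h\|_{H^{s/2}}$ via Hölder's inequality combined with the Sobolev-type embedding $\|L^{-1/2}u\|_{L^4} \lesssim \|u\|_{L^2}$ that already underlies Proposition \ref{N-est}. Since $\mathcal{L}$ is quadratic, $\mathcal{N}_c$ is cubic and $\mathcal{N}_r$ is quartic in $u$, the three contributions will naturally give rise to the factors $1$, $\|u\|_{L^2}$ and $\|u\|_{L^2}^2$ respectively.

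First, for the linear operator part, the self-adjointness of $L$ gives $\mathrm{d}\mathcal{L}(u)(h) = \int_{\mathbb{R}} h\, Lu\,\mathrm{d}x$. Writing this as $\int L^{1/2}u\cdot L^{1/2}h\,\mathrm{d}x$ and applying Cauchy--Schwarz together with Proposition \ref{L-est} yields $|\mathrm{d}\mathcal{L}(u)(h)|\lesssim \|u\|_{H^{s/2}}\|h\|_{H^{s/2}}$, which is the ``$1$'' term in the factor $(1+\|u\|_{L^2}+\|u\|_{L^2}^2)$.

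Next, for $\mathcal{N}_c$, a straightforward variation gives
\[
\mathrm{d}\mathcal{N}_c(u)(h) = \frac{1}{2}\int_{\mathbb{R}} L^{1/2}h\,(L^{-1/2}u)^2\,\mathrm{d}x + \int_{\mathbb{R}} L^{1/2}u\cdot L^{-1/2}u\cdot L^{-1/2}h\,\mathrm{d}x.
\]
For the first term I apply Cauchy--Schwarz to pair $L^{1/2}h$ against $(L^{-1/2}u)^2$, obtaining $\|L^{1/2}h\|_{L^2}\|L^{-1/2}u\|_{L^4}^2\lesssim \|h\|_{H^{s/2}}\|u\|_{L^2}^2$, and then absorb one factor of $\|u\|_{L^2}$ into $\|u\|_{H^{s/2}}$ to match the target form. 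For the second term, Hölder with exponents $(2,4,4)$ combined with Proposition \ref{L-est} and the $L^4$-bound gives directly $\|u\|_{H^{s/2}}\|u\|_{L^2}\|h\|_{L^2}\lesssim \|u\|_{H^{s/2}}\|u\|_{L^2}\|h\|_{H^{s/2}}$.

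Finally, for $\mathcal{N}_r$ one has $\mathrm{d}\mathcal{N}_r(u)(h) = \frac{1}{2}\int_{\mathbb{R}}(L^{-1/2}u)^3 L^{-1/2}h\,\mathrm{d}x$, and Hölder with exponents $(4,4,4,4)$ together with the $L^4$-bound gives $\|L^{-1/2}u\|_{L^4}^3\|L^{-1/2}h\|_{L^4}\lesssim \|u\|_{L^2}^3\|h\|_{L^2}$, which again is bounded by $\|u\|_{H^{s/2}}\|u\|_{L^2}^2\|h\|_{H^{s/2}}$ after using $\|u\|_{L^2}\le \|u\|_{H^{s/2}}$ and $\|h\|_{L^2}\le \|h\|_{H^{s/2}}$. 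Summing the three contributions yields the claimed inequality. There is no real obstacle here: the proof is essentially a repackaging of the estimates already recorded in Propositions \ref{L-est} and \ref{N-est}, and the only mild care needed is to correctly identify, in each of the three pieces, which factor should carry the $H^{s/2}$-norm of $u$ and which the $L^2$-norm so that the final bound takes the required product form.
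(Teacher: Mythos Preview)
Your proof is correct and follows essentially the same approach as the paper: decompose $\mathcal{E}=\mathcal{L}+\mathcal{N}_c+\mathcal{N}_r$ and bound each Fr\'echet derivative via H\"older together with the $L^4$ embedding underlying Proposition~\ref{N-est}. The only minor difference is in the second term of $\mathrm{d}\mathcal{N}_c(u)(h)$: the paper moves $L^{1/2}$ onto the product $L^{-1/2}u\,L^{-1/2}h$ and invokes a Sobolev product estimate, whereas your direct H\"older-$(2,4,4)$ bound on $\int L^{1/2}u\cdot L^{-1/2}u\cdot L^{-1/2}h\,\mathrm{d}x$ is slightly more elementary and avoids that step.
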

\begin{proof}
We first note that
\begin{equation*}
\abs{\mathrm{d}\mathcal{L}(u)(h)}\lesssim \norm{u}_{H^\frac{s}{2}}\norm{h}_{H^\frac{s}{2}}.
\end{equation*}
Next consider
\begin{equation}\label{dN_c}
\mathrm{d}\mathcal{N}_c(u)(h)=\frac{1}{2}\int_\mathbb{R}L^{1/2}h(L^{-1/2}u)^2+2uL^{1/2}(L^{-1/2}uL^{-1/2}h)\ \mathrm{d}x,
\end{equation}
where
\begin{equation*}
\norm{L^{1/2}h(L^{-1/2}u)^2}_{L^1}\leq\norm{L^{1/2}h}_{L^2}\norm{L^{-1/2}u}_{L^4}^2\lesssim \norm{u}_{L^2}^2\norm{h}_{H^\frac{s}{2}},
\end{equation*}
\begin{align*}
\norm{uL^{1/2}(L^{-1/2}uL^{-1/2}h)}_{L^1}&\leq \norm{u}_{L^2}\norm{L^{1/2}(L^{-1/2}uL^{-1/2}h)}_{L^2}\\
&\lesssim \norm{u}_{L^2}\norm{L^{-1/2}uL^{-1/2}h}_{H^\frac{s}{2}}\\
&\lesssim \norm{u}_{L^2}\norm{L^{-1/2}u}_{H^\frac{s}{2}}\norm{L^{-1/2}h}_{H^s}\\
&\lesssim\norm{u}_{L^2}^2\norm{h}_{H^\frac{s}{2}}.
\end{align*}
Using the above estimates in \eqref{dN_c}, we immediately get that
\begin{equation*}
\abs{\mathrm{d}\mathcal{N}_c(u)(h)}\lesssim \norm{u}_{L^2}^2\norm{h}_{H^\frac{s}{2}}.
\end{equation*}
In a similar way we find that
\begin{equation*}
\abs{\mathrm{d}\mathcal{N}_r(u)(h)}\lesssim\norm{u}_{L^2}^3\norm{h}_{H^\frac{s}{2}},
\end{equation*}
which concludes the proof.
\end{proof}
We next record a decomposition result for $\mathcal{N}_c$.
\begin{lemma}\label{N-decomp}
Let $u\in H^{s/2}(\mathbb{R})$.
Then
\begin{equation*}
\mathcal{N}_c(u)=\frac{1}{2\sqrt{m(0)}}\int_{\mathbb{R}}u^3\ \mathrm{d}x+\mathcal{N}_{c1}(u)+\mathcal{N}_{c2}(u)+\mathcal{N}_{c3}(u),
\end{equation*}
where 
\begin{align*}
\mathcal{N}_{1c}(u)&=\frac{\sqrt{m(0)}}{2}\int_\mathbb{R}u\left((L^{-1/2}-m^{-1/2}(0))u\right)^2\ \mathrm{d}x\\
\mathcal{N}_{2c}(u)&=\int_\mathbb{R}u^2(L^{-1/2}-m^{-1/2}(0))u\ \mathrm{d}x\\
\mathcal{N}_{3c}(u)&=\frac{1}{2}\int_\mathbb{R}(L^{-1/2}u)^2(L^{1/2}-m^{1/2}(0))u\ \mathrm{d}x,
\end{align*}
and
\begin{align*}
\abs{\mathcal{N}_{2c}(u)}&\leq \norm{u}_{L^4}^2\norm{(L^{-1/2}-m^{-1/2}(0))u}_{L^2}
,\\
\abs{\mathcal{N}_{3c}(u)}&\lesssim \norm{u}_{L^2}^2\norm{(L^{1/2}-m^{1/2}(0))u}_{L^2}.
\end{align*}
\end{lemma}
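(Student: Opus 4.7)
The identity is purely algebraic: we simply decompose $L^{\pm 1/2}u$ into its constant-symbol part at the origin and a remainder, and then track the resulting terms. Write
\[
L^{1/2}u = m^{1/2}(0)\,u + (L^{1/2}-m^{1/2}(0))u,\qquad L^{-1/2}u = m^{-1/2}(0)\,u + (L^{-1/2}-m^{-1/2}(0))u.
\]
Plugging the first splitting into the definition of $\mathcal N_c$, the contribution of the second summand is precisely $\mathcal N_{3c}(u)$, leaving
\[
\mathcal N_c(u) = \mathcal N_{3c}(u) + \frac{m^{1/2}(0)}{2}\int_{\mathbb R} u\,(L^{-1/2}u)^2\,\mathrm dx.
\]
Next I would substitute the second splitting for $L^{-1/2}u$ into the remaining integrand and expand the square, obtaining three pieces proportional, respectively, to $u^3$, to $u^2\,(L^{-1/2}-m^{-1/2}(0))u$, and to $u\,\bigl((L^{-1/2}-m^{-1/2}(0))u\bigr)^2$. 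After collecting the scalar prefactors, these are exactly $\frac{1}{2\sqrt{m(0)}}\int u^3\,\mathrm dx$, $\mathcal N_{2c}(u)$, and $\mathcal N_{1c}(u)$, which proves the claimed decomposition.

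The two estimates are then immediate from Cauchy--Schwarz. For $\mathcal N_{2c}$, writing $u^2$ in $L^2$ as $\|u\|_{L^4}^2$ yields
\[
\abs{\mathcal N_{2c}(u)} \leq \norm{u^2}_{L^2}\norm{(L^{-1/2}-m^{-1/2}(0))u}_{L^2} = \norm{u}_{L^4}^2\norm{(L^{-1/2}-m^{-1/2}(0))u}_{L^2}.
\]
For $\mathcal N_{3c}$, Cauchy--Schwarz gives
\[
\abs{\mathcal N_{3c}(u)} \leq \tfrac{1}{2}\norm{L^{-1/2}u}_{L^4}^2\,\norm{(L^{1/2}-m^{1/2}(0))u}_{L^2},
\]
and the bound $\norm{L^{-1/2}u}_{L^4}\lesssim \norm{u}_{L^2}$, which we have already used in the proof of Proposition \ref{N-est} via the Sobolev embedding $\norm{f}_{L^4}\lesssim \norm{\abs{\partial_x}^{1/4}f}_{L^2}$ together with $J^{1/4-s/2}\colon L^2\to L^2$ (valid for $s>1/2$), finishes the argument.

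There is no real obstacle here: the statement is an identity obtained by expanding about the symbol value $m(0)$, and the estimates are standard applications of Cauchy--Schwarz plus the same $L^4$-Sobolev bound already invoked earlier. The only point requiring any care is keeping the bookkeeping of numerical constants straight when expanding $(m^{-1/2}(0)u + (L^{-1/2}-m^{-1/2}(0))u)^2$ and multiplying by $m^{1/2}(0)\,u/2$, so that the coefficient of $\int u^3\,\mathrm dx$ comes out as $1/(2\sqrt{m(0)})$ and the coefficient of the cross term matches the $1$ in the definition of $\mathcal N_{2c}$.
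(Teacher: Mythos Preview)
Your proposal is correct and is exactly the straightforward computation the paper has in mind; indeed, the paper's own proof reads in full ``The proof is straightforward and is therefore omitted.'' Your expansion of $L^{\pm 1/2}u$ about the symbol value $m(0)$ and the two Cauchy--Schwarz estimates (the second combined with the $L^4$ bound from Proposition~\ref{N-est}) are precisely what is intended.
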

\begin{proof}
The proof is straightforward and is therefore omitted.
\end{proof}

Before we continue we want to make a remark on the convolution theorem.
According to our choice of the Fourier transform normalisation,
for any two functions $f$ and $g$ we have
\[
	\mathcal F (fg)
	= 
	\frac 1{2\pi} \widehat f * \widehat g
\]
where star stands for convolution.

\begin{lemma}
\label{translation_invariance}
	The functional $\mathcal E$ defined by \eqref{general_E_definition}
	is translation invariant.
	In other words,
	for any $u \in H^{s/2}(\mathbb R)$
	then
	\(
		\mathcal E(u_h) = \mathcal E(u)
		,
	\)
where $u_h(x) = u(x - h)$ denotes translation by $h \in \mathbb R$.
\end{lemma}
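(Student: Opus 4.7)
The plan is to exploit the fact that $L$ is a Fourier multiplier, so both $L^{1/2}$ and $L^{-1/2}$ are Fourier multipliers (with symbols $m^{1/2}$ and $m^{-1/2}$), and Fourier multipliers commute with translations. Concretely, I would first record the translation identity on the Fourier side: if $u_h(x)=u(x-h)$ then $\widehat{u_h}(\xi)=e^{-i h\xi}\widehat u(\xi)$, and multiplying by any symbol $m^{\pm 1/2}(\xi)$ leaves this phase factor untouched. Taking the inverse Fourier transform therefore yields
\[
L^{1/2}(u_h)=(L^{1/2}u)_h,\qquad L^{-1/2}(u_h)=(L^{-1/2}u)_h.
\]
This is the only nontrivial ingredient; once it is in hand the rest is just substitution.

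Next I would substitute these identities into the definition \eqref{general_E_definition}. Since pointwise products commute with translations ($((L^{-1/2}u)^2)_h=((L^{-1/2}u)_h)^2$ and similarly for the square of the whole bracket), the integrand evaluated at $u_h$ is exactly the integrand evaluated at $u$, translated by $h$:
\[
\left(L^{1/2}u_h(x)+\tfrac12(L^{-1/2}u_h(x))^2\right)^{2}
=\left(L^{1/2}u(x-h)+\tfrac12(L^{-1/2}u(x-h))^2\right)^{2}.
\]
Finally, translation invariance of Lebesgue measure on $\mathbb{R}$ (apply the change of variables $y=x-h$) gives $\mathcal E(u_h)=\mathcal E(u)$.

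The only mild technical point is to justify the identity $L^{\pm 1/2}(u_h)=(L^{\pm 1/2}u)_h$ at the level of distributions when $u\in H^{s/2}(\mathbb{R})$, but this is immediate from the Fourier characterization and the fact that $m^{\pm 1/2}$ defines a tempered distribution multiplier under the assumptions in Definition \ref{def-admissible}. No subtle estimates are required, so there is no real obstacle.
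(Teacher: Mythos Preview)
Your argument is correct and rests on the same key observation as the paper's proof, namely that the phase factor $e^{-ih\xi}$ introduced by translation passes through the symbols $m^{\pm 1/2}$. The only difference is cosmetic: the paper applies Plancherel to write $\mathcal E(u_h)$ as a Fourier-side integral, invokes the convolution theorem to see that $\mathcal F\big((L^{-1/2}u_h)^2\big)=e^{-ih\xi}\mathcal F\big((L^{-1/2}u)^2\big)$, and then observes that the common phase factor disappears under $|\cdot|^2$; you instead stay in physical space, deduce $L^{\pm 1/2}(u_h)=(L^{\pm 1/2}u)_h$ directly, and finish with the change of variables $y=x-h$. Your route is marginally more elementary in that it avoids Plancherel and the convolution theorem, while the paper's version makes the cancellation of the phase visible in one line; either way the content is the same.
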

\begin{proof}
Due to the property $\widehat{u_h}(\xi) = e^{-\mathrm{i}h\xi} \widehat{u}(\xi)$
and the Plancherel theorem we have
\begin{multline*}
	\mathcal E(u_h) =
	\frac 1{4\pi} \int_\mathbb{R}
	\left|
		\sqrt{m(\xi)} \widehat{u_h}(\xi)
		+ \frac 12 \mathcal F \left(\left( L^{-1/2}u_h \right)^2\right) (\xi)
	\right|^2
	\ \mathrm{d}\xi
	\\
	=	
	\frac 1{4\pi} \int_\mathbb{R}
	\left|
		e^{-\mathrm{i}h\xi} \sqrt{m(\xi)} \widehat{u}(\xi)
		+ \frac 12 e^{-\mathrm{i}h\xi} \mathcal F \left(\left( L^{-1/2}u \right)^2\right) (\xi)
	\right|^2
	\ \mathrm{d}\xi
	= \mathcal E(u)
\end{multline*}
where we have also used the fact that the Fourier transform
of multiplication is convolution
of Fourier transforms
up to a normalization constant.
%
%
\end{proof}

In the following lemma we provide a slightly sharper
estimate for $\mathcal N_c$.
It will be the first step
towards the non-vanishing proof given below.
\begin{lemma}
\label{Nc-estimate}
	For $s > 1/2$ the following estimate hold true	
	\[
		| \mathcal N_c(u) | \lesssim
		\lVert u \rVert_{L^2 (\mathbb R)}^2
		\lVert L^{-1/2}u \rVert_{L^{\infty} (\mathbb R)}
	\]
\end{lemma}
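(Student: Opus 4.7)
The idea is to rebalance derivatives in $\mathcal N_c(u)=\tfrac12\int L^{1/2}u\,(L^{-1/2}u)^2\dx$ by exploiting self-adjointness of $L^{1/2}$, so that one factor of $L^{-1/2}u$ can be peeled off in $L^\infty$ while the remaining two factors are controlled in $L^2$.

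First I would move half a derivative from $L^{1/2}u$ onto the quadratic factor: since the symbol $m^{1/2}$ is real and even, $L^{1/2}$ is self-adjoint and
\[
	2\mathcal N_c(u)
	= \int_\mathbb{R} L^{1/2}u\cdot (L^{-1/2}u)^2\dx
	= \int_\mathbb{R} u\cdot L^{1/2}\bigl[(L^{-1/2}u)^2\bigr]\dx.
\]
Cauchy--Schwarz then gives
\[
	|\mathcal N_c(u)|\leq \tfrac12 \norm{u}_{L^2}\,\norm{L^{1/2}\bigl[(L^{-1/2}u)^2\bigr]}_{L^2},
\]
and the remaining task is to bound the second factor by $\norm{L^{-1/2}u}_{L^\infty}\norm{u}_{L^2}$.

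This reduction is carried out in three standard steps. Because $m(\xi)\lesssim\langle\xi\rangle^s$ (by \eqref{m2} together with continuity of $m$ near $0$), the operator $L^{1/2}\colon H^{s/2}\to L^2$ is bounded, so
\[
	\norm{L^{1/2}\bigl[(L^{-1/2}u)^2\bigr]}_{L^2}\lesssim \norm{(L^{-1/2}u)^2}_{H^{s/2}}.
\]
The classical product (Moser/Kato--Ponce) estimate on $\mathbb R$, valid for $s/2>0$, yields
\[
	\norm{(L^{-1/2}u)^2}_{H^{s/2}}\lesssim \norm{L^{-1/2}u}_{L^\infty}\,\norm{L^{-1/2}u}_{H^{s/2}}.
\]
Finally, $m(0)>0$ combined with \eqref{m2} gives $m(\xi)\gtrsim\langle\xi\rangle^s$ uniformly, so $L^{-1/2}\colon L^2\to H^{s/2}$ is bounded, hence $\norm{L^{-1/2}u}_{H^{s/2}}\lesssim \norm{u}_{L^2}$. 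Chaining the four inequalities produces the claimed bound.

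The only step requiring any care is the bilinear estimate $\norm{fg}_{H^{s/2}}\lesssim \norm{f}_{L^\infty}\norm{g}_{H^{s/2}}+\norm{g}_{L^\infty}\norm{f}_{H^{s/2}}$; this is classical for $s/2\geq 0$, and in our case the two factors coincide so both terms collapse to the same expression. Everything else is self-adjointness, Cauchy--Schwarz and the symbol bounds already built into Definition~\ref{def-admissible}, so no genuine obstacle arises.
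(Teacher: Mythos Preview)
Your proof is correct and follows essentially the same route as the paper: move $L^{1/2}$ onto the quadratic factor by self-adjointness, apply Cauchy--Schwarz, replace $L^{1/2}$ by $J^{s/2}$ via the symbol bound, invoke a Kato--Ponce product estimate to extract $\lVert L^{-1/2}u\rVert_{L^\infty}$, and finish with $\lVert J^{s/2}L^{-1/2}u\rVert_{L^2}\lesssim\lVert u\rVert_{L^2}$. The paper's argument is the same chain of inequalities, only written more tersely.
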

\begin{proof}
	Clearly, $L^{-1/2}u \in L^{\infty} (\mathbb R)$
	and so applying a Kato--Ponce type estimate obtain
	\begin{multline*}
		| \mathcal N_c(u) |
		\lesssim
		\lVert u \rVert_{L^2}
		\lVert L^{1/2}( L^{-1/2}u )^2 \rVert_{L^2}
		\lesssim
		\lVert u \rVert_{L^2}
		\lVert J^{s/2}( L^{-1/2}u )^2 \rVert_{L^2}
		\\		
		\lesssim
		\lVert u \rVert_{L^2}
		\lVert J^{s/2} L^{-1/2}u \rVert_{L^2}
		\lVert L^{-1/2}u \rVert_{L^{\infty}}
		\lesssim
		\lVert u \rVert_{L^2}^2
		\lVert L^{-1/2}u \rVert_{L^{\infty}}
		.
	\end{multline*}
\end{proof}
We finish this section with a lemma which will be used when ruling out the dichotomy scenario.

\begin{lemma}\label{commutator}
Let $\varphi\in \mathcal{S}(\mathbb{R})$, and let $A_r\colon H^{s/2}(\mathbb{R})\to H^{s/2}(\mathbb{R})$, $B_r\colon L^2(\mathbb{R})\to L^2(\mathbb{R})$ be the operators
\begin{align*}
A_rf&=[L,\varphi\left(\frac{.}{r}\right)]f,\\
B_rf&=[L^{-\frac{1}{2}},\varphi\left(\frac{.}{r}\right)]f.
\end{align*}
Then the operator norms
\begin{equation*}
	\norm{A_r}   
	,
	\norm{B_r}   
	\rightarrow 0
	\ \mbox{ as } \ r \rightarrow \infty.
\end{equation*}
\end{lemma}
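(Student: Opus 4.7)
The plan is to express both commutators in the Fourier variable and reduce the operator-norm bounds to a single integral controlled by Lemma \ref{uniform-cont}. With $\psi_r(x)=\varphi(x/r)$, one has $\widehat{\psi_r}(\zeta)=r\widehat{\varphi}(r\zeta)$ and
\begin{equation*}
\widehat{A_r f}(\xi)=\frac{1}{2\pi}\int_{\mathbb R}\bigl(m(\xi)-m(\eta)\bigr)\widehat{\psi_r}(\xi-\eta)\widehat{f}(\eta)\,d\eta,
\end{equation*}
with the analogous expression for $B_r$ obtained by replacing $m$ by $m^{-1/2}$. The master quantity I will track is
\begin{equation*}
\mathcal{I}_r:=\int_{\mathbb R}\omega(\zeta)|\widehat{\psi_r}(\zeta)|\,d\zeta=\int_{\mathbb R}\omega(\tau/r)|\widehat{\varphi}(\tau)|\,d\tau,
\end{equation*}
which tends to zero as $r\to\infty$ by dominated convergence: $\omega(\tau/r)\to 0$ pointwise by the hypothesis $\omega(\lambda)\to 0$ as $\lambda\to 0$, while for $r\geq 1$ the integrand is dominated by $C\langle\tau\rangle^{N}|\widehat{\varphi}(\tau)|\in L^1(\mathbb R)$ using the polynomial growth of $\omega$ and the Schwartz decay of $\widehat\varphi$.

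For $B_r$ the argument is direct. Using $m(\xi)\simeq\langle\xi\rangle^s$ (a consequence of Definition \ref{admissible}) together with the identity $a^{-1/2}-b^{-1/2}=(b-a)[(ab)^{1/2}(a^{1/2}+b^{1/2})]^{-1}$, Lemma \ref{uniform-cont} upgrades to the Lipschitz-type estimate
\begin{equation*}
|m^{-1/2}(\xi)-m^{-1/2}(\eta)|\lesssim\frac{\omega(\xi-\eta)\langle\xi\rangle^{s/2}\langle\eta\rangle^{s/2}}{\langle\xi\rangle^{s/2}\langle\eta\rangle^{s/2}\bigl(\langle\xi\rangle^{s/2}+\langle\eta\rangle^{s/2}\bigr)}\lesssim\omega(\xi-\eta),
\end{equation*}
after cancellation. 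Applying Schur's test, which is applicable because the bound depends only on $\xi-\eta$, I obtain $\|B_r\|_{L^2\to L^2}\lesssim\mathcal{I}_r\to 0$.

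For $A_r$ one invokes Lemma \ref{uniform-cont} in full strength: $|m(\xi)-m(\eta)|\leq\omega(\xi-\eta)\langle\xi\rangle^{s/2}\langle\eta\rangle^{s/2}$. The decisive feature is that the symmetric factor $\langle\xi\rangle^{s/2}\langle\eta\rangle^{s/2}$ matches the $H^{s/2}$-weights on both sides of the operator. Via Plancherel, $\|A_rf\|_{H^{s/2}}$ equals the $L^2$-norm of $\langle\xi\rangle^{s/2}\widehat{A_rf}$; one factor $\langle\xi\rangle^{s/2}$ from the Lemma is absorbed there, while the companion factor $\langle\eta\rangle^{s/2}$ is absorbed into $\langle\eta\rangle^{s/2}|\widehat f(\eta)|$ and bounded by $\|f\|_{H^{s/2}}$. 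Peetre's inequality $\langle\xi\rangle^{s/2}\lesssim\langle\xi-\eta\rangle^{s/2}\langle\eta\rangle^{s/2}$ is used to redistribute any leftover weight, leaving a convolution operator with kernel $\omega(\xi-\eta)|\widehat{\psi_r}(\xi-\eta)|$ plus a correction weighted by $\langle\zeta\rangle^s$. Young's convolution inequality then bounds the operator norm by $\mathcal{I}_r+\int_{\mathbb R}\langle\zeta\rangle^s\omega(\zeta)|\widehat{\psi_r}(\zeta)|\,d\zeta$, and the second integral also vanishes as $r\to\infty$ by the same dominated convergence argument.

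The main obstacle is the $A_r$ bound. Unlike for $B_r$, the symbol $m$ is unbounded of order $s$, and a naive Schur estimate on the unweighted kernel diverges; the careful distribution of $H^{s/2}$-weights between input and output is essential. The symmetric factorisation $\langle\xi\rangle^{s/2}\langle\eta\rangle^{s/2}$ produced by Lemma \ref{uniform-cont} is exactly what makes this bookkeeping possible, transferring the burden onto the small factor $\omega(\xi-\eta)$ localised where $\widehat{\psi_r}$ is concentrated.
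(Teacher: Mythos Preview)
Your treatment of $B_r$ is correct and essentially identical to the paper's: rewrite $m^{-1/2}(\xi)-m^{-1/2}(\eta)$ via the algebraic identity, apply Lemma~\ref{uniform-cont}, and conclude by a convolution/Schur estimate with kernel $\omega(\cdot)|\widehat{\psi_r}(\cdot)|$.

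For $A_r$, however, your weight bookkeeping breaks down. From Lemma~\ref{uniform-cont} you get
\[
|\widehat{A_rf}(\xi)|\ \lesssim\ \langle\xi\rangle^{s/2}\int_{\mathbb R}\omega(\xi-\eta)\,|\widehat{\psi_r}(\xi-\eta)|\,\langle\eta\rangle^{s/2}|\widehat f(\eta)|\,d\eta,
\]
so the factor $\langle\xi\rangle^{s/2}$ coming from the Lemma is a \emph{growth} factor on $\widehat{A_rf}$; it cannot be ``absorbed'' into the output $H^{s/2}$-weight, which is an \emph{additional} multiplicative $\langle\xi\rangle^{s/2}$. After multiplying by that output weight you are left with $\langle\xi\rangle^{s}\langle\eta\rangle^{s/2}$, and Peetre's inequality only trades $\langle\xi\rangle$-powers for $\langle\eta\rangle$-powers, forcing $\langle\eta\rangle^{3s/2}|\widehat f(\eta)|$ and hence $\|f\|_{H^{s}}$ rather than $\|f\|_{H^{s/2}}$. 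In fact the $H^{s/2}\!\to\!H^{s/2}$ mapping property fails in general: for $L=1-b\partial_x^2$ (admissible with $s=2$) the commutator $[L,\varphi_r]=-2b\,\varphi_r'\partial_x-b\,\varphi_r''$ is first order and does not even map $H^1$ to $H^1$.

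What the symmetric bound $\langle\xi\rangle^{s/2}\langle\eta\rangle^{s/2}$ \emph{does} give, and what the paper actually proves, is the bilinear estimate
\[
|\langle A_rf,g\rangle_{L^2}|\ \lesssim\ \mathcal I_r\,\|f\|_{H^{s/2}}\|g\|_{H^{s/2}},
\]
i.e.\ $\|A_r\|_{H^{s/2}\to H^{-s/2}}\to 0$. One $\langle\xi\rangle^{s/2}$ pairs with $\widehat g$ and the other with $\widehat f$, leaving only the convolution kernel $\omega|\widehat{\psi_r}|$ and your integral $\mathcal I_r$. This weaker statement is exactly what is invoked in the dichotomy argument (Proposition~\ref{D-prop}), where the commutator appears only inside $L^2$-pairings of the form $\int w_n\,[L,\chi_{in}]w_n\,dx$. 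So the fix is simply to argue by duality rather than directly in $H^{s/2}$.
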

\begin{proof}
We follow the proof of \cite[Lemma 6.2]{Maehlen}.
Let $\varphi_r(x)=\varphi(x/r)$.
Using Lemma \ref{uniform-cont}, we find that for $f,g\in H^{s/2}(\mathbb{R})$
\begin{align*}
\abs{\langle A_rf,g\rangle}
&=
\frac 1{2\pi}
\left|
	\int_\mathbb{R}\int_\mathbb{R}\widehat{\varphi_r}(\eta)
	\widehat{f}(\xi-\eta)(m(\xi)-m(\xi-\eta))\overline{\widehat{g}(\xi)}
	\ \mathrm{d}\eta\ \mathrm{d}\xi
\right|
\\
&\lesssim
\int_\mathbb{R}
\left|
\widehat{\varphi_r}(\eta)\omega(\eta)
\right|
\int_\mathbb{R}\langle\xi-\eta\rangle^\frac{s}{2}\abs{\widehat{f}(\xi-\eta)}\langle\eta\rangle^\frac{s}{2}\abs{\widehat{g}(\eta)}\ \mathrm{d}\eta\mathrm{d}\xi\\
&\lesssim \int_\mathbb{R}\abs{\widehat{\varphi}(\eta)\omega\left(\eta/r\right)}\ \mathrm{d}\eta\norm{f}_{H^{s/2}}\norm{g}_{H^{s/2}}.
\end{align*}
Hence $\norm{A_r} \lesssim \int_\mathbb{R}\abs{\widehat{\varphi}(\eta)\omega(\eta/r)}\ \mathrm{d}\eta$
and this last integral tends to zero by
the dominated convergence theorem as $r\rightarrow \infty$,
since $\omega$ is bounded above by a polynomial and
$\lim_{\eta\rightarrow 0}\omega(\eta)\rightarrow 0$.

Similarly, for $f,g\in L^2(\mathbb{R})$ we have
\begin{multline*}
%
\abs{\langle B_rf, g\rangle}
=
\frac 1{2\pi}
\left|
\int_\mathbb{R}\int_\mathbb{R}\widehat{\varphi_r}(\eta)\widehat{f}(\xi-\eta)(m^{-1/2}(\xi)-m^{-1/2}(\xi-\eta))\overline{\widehat{g}(\xi)}\ \mathrm{d}\eta\mathrm{d}\xi
\right|
\\
=
\frac 1{2\pi}
\left|
\int_\mathbb{R}\int_\mathbb{R}\widehat{\varphi_r}(\eta)\widehat{f}(\xi-\eta)\left(\frac{m(\xi-\eta)-m(\xi)}{m^{1/2}(\xi-\eta)m^{1/2}(\xi)(m^{1/2}(\xi-\eta)+m^{1/2}(\xi))}\right)\overline{\widehat{g}(\xi)}\ \mathrm{d}\eta\mathrm{d}\xi
\right|
\\
\lesssim
\int_\mathbb{R} \abs{\widehat{\varphi_r}(\eta)\omega(\eta)}\int_\mathbb{R}\frac{\langle\xi-\eta\rangle^{s/2}}{m^{1/2}(\xi-\eta)}\abs{\widehat{f}(\xi-\eta)}\frac{\langle\eta\rangle^{s/2}}{m^{1/2}(\eta)}\abs{\widehat{g}(\eta)}\ \mathrm{d}\eta\mathrm{d}\xi
\\
\lesssim
\int_\mathbb{R}\abs{\widehat{\varphi}(\eta)\omega\left(\eta/r\right)}\ \mathrm{d}\eta\norm{f}_{L^2}\norm{g}_{L^2},
%
\end{multline*}
and we can conclude in the same way as before that
$\norm{B_r} \rightarrow 0$ as $r\rightarrow \infty$.
\end{proof}

\section{Near minimizers}
\label{near-min_section}

In this section we provide necessary estimates for
the infimum
\begin{equation}
\label{inf_definition}
	I_q = \inf _{u \in U_q} \mathcal{E}(u)
\end{equation}
and for those $u \in U_q$ that give values $\mathcal E(u)$
close to this infimum.
The regarded functional \eqref{general_E_definition}
is non-negative and so the same is true for the infimum.
However, we also need an upper bound for $I_q$
and this is addressed in the next result. 
\begin{proposition}\label{near-min}
There exist constants $D, q_0>0$ such that for $q\in (0,q_0)$
\begin{equation*}
	0 \leqslant I_q < m(0)q - Dq^{1+\beta},
\end{equation*}
with $\beta=\frac{s'}{2s'-1}$.
\end{proposition}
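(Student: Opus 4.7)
The lower bound $I_q\geqslant 0$ is immediate from the manifestly non-negative representation \eqref{general_E_definition} of $\mathcal E$. For the upper bound, the plan is to exhibit an explicit long-wave trial function $u_\varepsilon\in U_q$, obtained by scaling a fixed profile, and optimize the scale parameter in $q$.

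Fix a real-valued $\varphi\in\mathcal S(\mathbb R)$ whose Fourier transform $\widehat{\varphi}$ is compactly supported in a small neighborhood of $0$, chosen so that $\int_{\mathbb R}\varphi^3\,\mathrm dx<0$ (for instance any non-positive bump with suitable regularity). For $\varepsilon\in(0,1]$ set
\begin{equation*}
u_\varepsilon(x)=A(\varepsilon)\,\varphi(\varepsilon x),\qquad A(\varepsilon)^2=\frac{2q\varepsilon}{\lVert\varphi\rVert_{L^2}^2},
\end{equation*}
so that $\mathcal Q(u_\varepsilon)=q$, i.e.\ $u_\varepsilon\in U_q$. By the Plancherel theorem and the change of variable $\eta=\xi/\varepsilon$,
\begin{equation*}
\mathcal L(u_\varepsilon)=\frac{A(\varepsilon)^2}{4\pi\varepsilon}\int_{\mathbb R}m(\varepsilon\eta)\bigl|\widehat{\varphi}(\eta)\bigr|^2\,\mathrm d\eta.
\end{equation*}
Since $\widehat{\varphi}$ is supported in a small neighborhood of $0$, for $\varepsilon$ small one has $|\varepsilon\eta|\leqslant 1$ throughout the support, and the upper bound in \eqref{m1} yields $m(\varepsilon\eta)-m(0)\lesssim\varepsilon^{s'}|\eta|^{s'}$. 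Combined with Plancherel this produces
\begin{equation*}
\mathcal L(u_\varepsilon)\leqslant m(0)\,q+C_1\,q\,\varepsilon^{s'}
\end{equation*}
for a constant $C_1$ depending only on $\varphi$ and $m$.

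Next I would expand the nonlinear part via Lemma \ref{N-decomp}. The leading contribution is
\begin{equation*}
\frac{1}{2\sqrt{m(0)}}\int_{\mathbb R}u_\varepsilon^3\,\mathrm dx=\frac{A(\varepsilon)^3}{2\varepsilon\sqrt{m(0)}}\int_{\mathbb R}\varphi^3\,\mathrm dx=-C_2\,q^{3/2}\varepsilon^{1/2},
\end{equation*}
with $C_2>0$ by the sign assumption on $\int\varphi^3\,\mathrm dx$. For the remaining pieces I observe that both $\bigl(L^{-1/2}-m^{-1/2}(0)\bigr)u_\varepsilon$ and $\bigl(L^{1/2}-m^{1/2}(0)\bigr)u_\varepsilon$ pick up the symbol difference $m^{\pm 1/2}(\varepsilon\eta)-m^{\pm 1/2}(0)$ over the support of $\widehat\varphi$; using \eqref{m1} again one gets $L^2$-bounds that are $o(\varepsilon^{1/2}q^{1/2})$ in the appropriate scaling. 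Combining these with the $L^2/L^4$ estimates listed in Lemma \ref{N-decomp} and with Proposition \ref{N-est} applied to $\mathcal N_r(u_\varepsilon)$ (which scales as $q^2\varepsilon$), all of $\mathcal N_{1c},\mathcal N_{2c},\mathcal N_{3c}$ and $\mathcal N_r$ are strictly lower order than the cubic leading term as $\varepsilon,q\to 0$. Consequently
\begin{equation*}
\mathcal E(u_\varepsilon)\leqslant m(0)\,q+C_1\,q\,\varepsilon^{s'}-C_2\,q^{3/2}\varepsilon^{1/2}+o\bigl(q^{3/2}\varepsilon^{1/2}\bigr).
\end{equation*}

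Finally I optimize: the expression $C_1 q\varepsilon^{s'}-C_2 q^{3/2}\varepsilon^{1/2}$ is minimized at $\varepsilon\simeq q^{1/(2s'-1)}$, and both terms then contribute at the same order $q^{1+\beta}$ with $\beta=s'/(2s'-1)$, the cubic contribution dominating in sign. Choosing this $\varepsilon$ and $q_0>0$ small enough to absorb the lower order remainders into a fraction of the leading negative term yields
\begin{equation*}
I_q\leqslant\mathcal E(u_\varepsilon)<m(0)\,q-D\,q^{1+\beta}
\end{equation*}
for $q\in(0,q_0)$, as required. The main point of care is to confirm that the cubic-in-$u_\varepsilon$ term really dominates all other corrections in the scaling $\varepsilon\simeq q^{1/(2s'-1)}$; the assumption $s'>1$ (so that $\beta<1$) and the compact Fourier support of $\varphi$ are exactly what make this comparison work.
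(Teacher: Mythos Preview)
Your proof is correct and follows essentially the same approach as the paper: both use a long-wave rescaling of a fixed band-limited profile as trial function, expand $\mathcal N_c$ via Lemma \ref{N-decomp} to isolate the leading cubic $\int u^3$, and then balance the $q\varepsilon^{s'}$ correction from $\mathcal L$ against the $-q^{3/2}\varepsilon^{1/2}$ cubic term by choosing $\varepsilon\simeq q^{1/(2s'-1)}$. The only cosmetic differences are that the paper writes the scale as $\alpha=1/\varepsilon$, disposes of $\mathcal N_{1c}$ by the sign observation $\varphi\leqslant 0\Rightarrow\mathcal N_{1c}\leqslant 0$ rather than by a smallness bound, and uses the cruder estimate $|\mathcal N_r|\lesssim q^2$ (which is where $\beta<1$ is actually invoked) instead of your sharper $q^2\varepsilon$.
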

\begin{proof}
It is immediate that $0 \leqslant I_q$.
To establish the other inequality we consider $\varphi\in C^\infty(\mathbb{R})$, with $\text{supp}(\hat{\varphi})\subseteq(-1,1)$, $\varphi(x)\leq 0$, $x\in \mathbb{R}$ and $\mathcal{Q}(\varphi)=1$. We rescale and define $\varphi_{q,\alpha}(x)=\sqrt{q/\alpha}\varphi(x/\alpha)$, $\alpha>1$, so that $\mathcal{Q}(\varphi_{q,\alpha})=q$.

We first note that
\begin{equation}\label{Lphi-est}
\mathcal{L}(\varphi_{q,\alpha})\leq m(0)q+C_1q\alpha^{-s'},\ C_1>0,
\end{equation}
and using Proposition \ref{N-est}
\begin{equation}\label{Nrphi-est}
\abs{\mathcal{N}_r(\varphi_{q,\alpha})}\leq C_2q^2,\ C_2>0.
\end{equation}
In order to estimate $\mathcal{N}_c(\phi_{q,\alpha})$ we begin by estimating
\begin{align*} 
0\leq m^{1/2}(\xi)-m^{1/2}(0)&\leq \frac{m(\xi)-m(0)}{2\sqrt{m(0)}},\\
\abs{m^{-1/2}(\xi)-m^{-1/2}(0)}&\leq \frac{m(\xi)-m(0)}{2m(0)\sqrt{m(0)}},
\end{align*}
and then, using Lemma \ref{N-decomp}, we find that
\begin{align*}
\abs{\mathcal{N}_{2c}(\varphi_{q,\alpha})}&\lesssim q^{3/2}\alpha^{-s'-1/2},\\
\abs{\mathcal{N}_{3c}(\varphi_{q,\alpha})}&\lesssim q^{3/2}\alpha^{-s'}.
\end{align*}
Moreover, since $\varphi(x)\leq 0$, we have that
\begin{align*}
\frac{1}{2\sqrt{m(0)}}\int_\mathbb{R}\varphi_{q,\alpha}(x)^3\ \mathrm{d}x&=-2C_0q^{3/2}\alpha^{-1/2},\ C_0>0\\
\mathcal{N}_{1c}(\varphi_{q,\alpha})&\leq 0.
\end{align*}
Hence, it follows from the above estimates that there exists $\alpha_0>1$, such that for $\alpha\geq \alpha_0$,
\begin{equation*}
\mathcal{N}_c(\varphi_{q,\alpha})\leq -C_0q^{3/2}\alpha^{-1/2},
\end{equation*}
and combining this with \eqref{Lphi-est}, \eqref{Nrphi-est}, yields
\begin{equation}\label{Ephi-est}
\mathcal{E}(\varphi_{q,\alpha})\leq m(0)q-\left(C_0q^{3/2}\alpha^{-1/2}-C_1q\alpha^{-s'}\right)+C_2q^2,
\end{equation}
and by choosing $\alpha^{-s'}=Bq^\beta$, with $0<B\leq \alpha_0^{-s'}q^{-\beta}$, so that $\alpha\geq \alpha_0$, we get from \eqref{Ephi-est} that
\begin{equation}
\mathcal{E}(\varphi_{q,\alpha})\leq m(0)q-\underbrace{(C_0B^{1/(2s')}-C_1B)}_{=:2D}q^{1+\beta}+C_2q^2,
\end{equation}
By choosing $B$ small enough we have that $D>0$, and if we in addition choose $q_0$ sufficiently small, we find that
\begin{equation*}
I_q\leq \mathcal{E}(\varphi_{q,\alpha})< m(0)q-Dq^{1+\beta}.
\end{equation*}
\end{proof}

Note that it is possible to weaken the restriction
on $s'$ in the last proposition, namely
imposing $s' > 1/2$ instead.
It can be done by more accurate estimate of
the term $\abs{\mathcal{N}_r(\varphi_{q,\alpha})}$.
In other words it is not important here that $\beta < 1$.
However, it will be important for the subadditivity of $I_q$
below.
%
%
%
%

We now define a near minimizer to be an element $u$ of $U_{q}$ such that
\begin{equation}\label{near-min-ineq}
	\mathcal E(u) < m(0)q - Dq^{1+\beta}.
\end{equation}
By the previous proposition, there exist such elements $u \in U_{q}$. 
\begin{proposition}\label{near-min-est-prop}
A near minimizer $u \in U_{q}$ satisfies
\begin{equation*}
\norm{u}_{H^{s/2}}^2 \lesssim q.
\end{equation*}
\end{proposition}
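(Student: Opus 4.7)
The plan is to obtain the bound from the definitional inequality $\mathcal{E}(u) < m(0)q - D q^{1+\beta}$ by trading the (positive) quadratic piece $\mathcal{L}(u)$ against the (possibly negative) cubic and quartic pieces $\mathcal{N}_c(u), \mathcal{N}_r(u)$. Concretely, I would rearrange the near-minimizer inequality as
\begin{equation*}
	\mathcal{L}(u) < m(0)q - Dq^{1+\beta} - \mathcal{N}_c(u) - \mathcal{N}_r(u)
	\leq m(0)q + |\mathcal{N}_c(u)| + |\mathcal{N}_r(u)|,
\end{equation*}
and then estimate each term on the right using what we already have.

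The key inputs are Proposition \ref{L-est}, which gives $\|u\|_{H^{s/2}}^2 \simeq \mathcal{L}(u)$, Proposition \ref{N-est}, which gives $|\mathcal{N}_c(u)| \lesssim \|u\|_{L^2}^2 \|u\|_{H^{s/2}}$ and $|\mathcal{N}_r(u)| \lesssim \|u\|_{L^2}^4$, and the constraint $\|u\|_{L^2}^2 = 2q$. Plugging these in yields
\begin{equation*}
	\|u\|_{H^{s/2}}^2 \lesssim q + q\,\|u\|_{H^{s/2}} + q^2.
\end{equation*}
The only mildly delicate step is handling the mixed term $q\,\|u\|_{H^{s/2}}$. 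I would apply Young's inequality $q\,\|u\|_{H^{s/2}} \leq \varepsilon \|u\|_{H^{s/2}}^2 + C_\varepsilon q^2$ with $\varepsilon$ chosen small enough to be absorbed into the implicit constant on the left, leaving
\begin{equation*}
	\|u\|_{H^{s/2}}^2 \lesssim q + q^2 \lesssim q,
\end{equation*}
where the last bound uses $q \in (0,q_0)$ with $q_0$ small (possibly shrinking $q_0$ from Proposition \ref{near-min} if necessary).

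This is essentially a routine coercivity argument, so I do not anticipate a real obstacle. The only conceptual point worth highlighting is that the quartic term $|\mathcal{N}_r(u)| \lesssim q^2$ and the negative term $-Dq^{1+\beta}$ are both lower order compared to $m(0)q$ (since $\beta < 1$), so they play no role in the final bound; the whole mechanism rests on the fact that $\mathcal{N}_c$ is only cubic with a $\|u\|_{H^{s/2}}$ factor that can be absorbed, together with the quadratic coercivity of $\mathcal{L}$.
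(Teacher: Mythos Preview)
Your proof is correct and follows essentially the same route as the paper: write $\mathcal{L}(u)=\mathcal{E}(u)-\mathcal{N}(u)$, bound $\mathcal{E}(u)$ by the near-minimizer inequality, control $|\mathcal{N}_c|$ and $|\mathcal{N}_r|$ via Proposition~\ref{N-est} and the constraint $\mathcal{Q}(u)=q$, and then absorb the mixed term $q\,\|u\|_{H^{s/2}}$ into the left-hand side. The only cosmetic difference is that you close with Young's inequality, whereas the paper solves the resulting quadratic inequality directly for small $q$; the content is the same.
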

\begin{proof}
Using propositions  \ref{L-est}, \ref{N-est} and \ref{near-min}, we find that 
\begin{align*}
\norm{u}_{H^\frac{s}{2}(\mathbb{R})}^2&\simeq \mathcal{L}(u)\\
&=\mathcal{E}(u)-\mathcal{N}(u)\\
&\lesssim m(0)q-Dq^{1+\beta}+\norm{u}_{L^2(\mathbb{R})}^2\norm{u}_{H^\frac{s}{2}(\mathbb{R})}+\norm{u}_{L^2(\mathbb{R})}^4\\
&\lesssim m(0)q-Dq^{1+\beta}+q\norm{u}_{H^\frac{s}{2}(\mathbb{R})}+q^2.
\end{align*}
Hence, it follows that for $q$ sufficiently small
\begin{equation*}
\norm{u}_{H^\frac{s}{2}(\mathbb{R})}^2\lesssim m(0)q-Dq^{1+\beta}\lesssim q.
\end{equation*}
\end{proof}
We next show that $I_q$ is strictly subadditive as a function of $q$.
This is essential when proving that dichotomy cannot occur. 
\begin{proposition}\label{subadditivity}
For any $q_1,q_2\in(0,q_0)$ such that $q_1+q_2\in(0,q_0)$,
holds
\begin{equation}\label{suba}
0<I_{q_1+q_2}<I_{q_1}+I_{q_2}.
\end{equation}
\end{proposition}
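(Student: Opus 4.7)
The plan is to reduce strict subadditivity to the strict monotonicity of the quotient $q\mapsto I_q/q$ on $(0,q_0)$ (possibly after shrinking $q_0$). Indeed, if $I_q/q$ is strictly decreasing, then since $q_i<q_1+q_2\leq q_0$ one obtains $I_{q_1+q_2}/(q_1+q_2)<I_{q_i}/q_i$ for $i=1,2$, and multiplying by $q_i$ and summing yields $I_{q_1+q_2}<I_{q_1}+I_{q_2}$. The positivity $I_{q_1+q_2}>0$ is handled separately: any minimizing sequence $\{u_n\}\subset U_q$ eventually satisfies $\mathcal{E}(u_n)<m(0)q-Dq^{1+\beta}$ by Proposition~\ref{near-min} and is therefore a near minimizer, so Proposition~\ref{near-min-est-prop} gives $\norm{u_n}_{H^{s/2}}^2\lesssim q$. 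Combining $\mathcal{L}(u_n)\geq m(0)q$ with the estimate $|\mathcal{N}(u_n)|\lesssim \norm{u_n}_{L^2}^2\norm{u_n}_{H^{s/2}}+\norm{u_n}_{L^2}^4\lesssim q^{3/2}$ from Proposition~\ref{N-est} gives $\mathcal{E}(u_n)\geq m(0)q-Cq^{3/2}\geq m(0)q/2$ for $q\in(0,q_0)$ after shrinking $q_0$ if needed, so $I_q\geq m(0)q/2>0$.

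For the monotonicity, the key step is the following scaling estimate: given a near minimizer $u\in U_q$ and $\theta\in(1,2]$, the rescaled function $v=\sqrt{\theta}\,u$ lies in $U_{\theta q}$, and the homogeneities $\mathcal{L}(v)=\theta\mathcal{L}(u)$, $\mathcal{N}_c(v)=\theta^{3/2}\mathcal{N}_c(u)$, $\mathcal{N}_r(v)=\theta^2\mathcal{N}_r(u)$ yield
\[
\mathcal{E}(v)-\theta\mathcal{E}(u)=\theta(\theta^{1/2}-1)\mathcal{N}_c(u)+\theta(\theta-1)\mathcal{N}_r(u).
\]
Since $u$ is a near minimizer, $\mathcal{E}(u)<m(0)q-Dq^{1+\beta}$; together with $\mathcal{L}(u)\geq m(0)q$ and $\mathcal{N}_r(u)\geq 0$ this forces $\mathcal{N}_c(u)\leq-Dq^{1+\beta}$. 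Combining this bound with $\mathcal{N}_r(u)\leq Cq^2$ (from Proposition~\ref{N-est}) and the elementary inequalities $\theta(\theta^{1/2}-1)\geq(\theta-1)/2$ and $\theta(\theta-1)\leq 2(\theta-1)$ valid on $[1,2]$, one obtains
\[
\mathcal{E}(v)-\theta\mathcal{E}(u)\leq(\theta-1)\,q^{1+\beta}\bigl[-D/2+2Cq^{1-\beta}\bigr].
\]
Because $\beta<1$, shrinking $q_0$ if necessary makes the bracket at most $-D/4$ uniformly in $q\in(0,q_0)$. Passing to the infimum along a minimizing sequence for $I_q$ (whose tail is composed of near minimizers) gives $I_{\theta q}\leq\theta I_q-(D/4)(\theta-1)q^{1+\beta}<\theta I_q$, i.e.\ $I_{\theta q}/(\theta q)<I_q/q$ for every $\theta\in(1,2]$. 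For general $q<q'\leq q_0$ with $q'/q>2$, write $q'=2^k\theta_0\,q$ with $k\in\mathbb{N}$ and $\theta_0\in(1,2]$, and chain the doubling estimate through the intermediate scales $q,\,2q,\,\ldots,\,2^k q,\,q'$---all of which lie in $(0,q_0)$, so the estimate applies at every step---to conclude $I_{q'}/q'<I_q/q$.

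The main obstacle is ensuring that the gain of order $q^{1+\beta}$ extracted from the near-minimality of $u$ (via $\mathcal{N}_c$) dominates the loss of order $q^2$ produced by the nonlinear rescaling (via $\mathcal{N}_r$); this dominance as $q\to 0$ is equivalent to $1+\beta<2$, that is, $\beta<1$, which is precisely the restriction ensured by the hypothesis $s'>1$ in Definition~\ref{admissible}.
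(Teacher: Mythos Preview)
Your proof is correct and follows essentially the same route as the paper: you establish strict subhomogeneity $I_{\theta q}<\theta I_q$ for $\theta\in(1,2]$ by scaling a near minimizer, exploiting $\mathcal{N}_c(u)\leq -Dq^{1+\beta}$ together with $\mathcal{N}_r(u)\lesssim q^2$ and $\beta<1$, and then chain the estimate to reach arbitrary ratios. The only cosmetic differences are that you phrase subhomogeneity as strict monotonicity of $q\mapsto I_q/q$, you use the elementary bounds $\theta(\theta^{1/2}-1)\geq(\theta-1)/2$ and $\theta(\theta-1)\leq 2(\theta-1)$ where the paper uses $\sqrt{a}-1\geq (a-1)/(1+\sqrt{2})$, and you prove $I_q>0$ by the direct lower bound $\mathcal{E}(u_n)\geq m(0)q-Cq^{3/2}$ rather than reading it off from the explicit gap in the subhomogeneity inequality.
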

\begin{proof}
We show that $I_q$ is strictly subhomogeneous, i.e
\begin{equation}\label{subh}
I_{aq}<aI_q, \ a>1, q<aq<q_0,
\end{equation}
from which the strict subadditivity follows from a standard argument.
First we show that \eqref{subh} holds for $a\in(1,2]$.
Let $\{u_n\}_{n=1}^\infty$ be a minimizing sequence. From \eqref{near-min-ineq} we have that
\begin{equation}\label{near-min-ineq-decomp}
\mathcal{L}(u_n)+\mathcal{N}_c(u_n)+\mathcal{N}_r(u_n)<m(0)q-Dq^{1+\beta},
\end{equation}
and since $\mathcal{L}(u_n)\geq m(0)q$, $\mathcal{N}_r(u)\geq 0$, we get from \eqref{near-min-ineq-decomp} that
\begin{equation}\label{N_c-est}
\mathcal{N}_c(u)<-Dq^{1+\beta}.
\end{equation}
We also note that $\sqrt{a}-1\geq (a-1)/(1+\sqrt{2})$. With this in mind we see that
\begin{align*}
I_{aq}&\leq \mathcal{E}(a^{1/2}u_n)\\
&=\mathcal{L}(a^{1/2}u_n)+\mathcal{N}(a^{1/2}u_n)\\
&=a\mathcal{L}(u_n)+a^{3/2}\mathcal{N}_c(u_n)+a^2\mathcal{N}_r(u_n)\\
&=a\mathcal{E}(u_n)-a(\mathcal{N}_c(u_n)+\mathcal{N}_r(u_n))+a^{3/2}\mathcal{N}_c(u_n)+a^2\mathcal{N}_r(u_n)\\
&=a\mathcal{E}(u_n)+(a^{3/2}-a)\mathcal{N}_c(u_n)+(a^2-a)\mathcal{N}_r(u_n)\\
&\leq a\mathcal{E}(u_n)-(a^{3/2}-a)Dq^{1+\beta}+(a^2-a)C_3q^2\\
&\leq  a\mathcal{E}(u_n)-(a^2-a)\left(\frac{Dq^{1+\beta}}{1+\sqrt{2}}-C_3q^2\right).
\end{align*}
Hence, for $q_0$  sufficiently small
\begin{equation*}
I_{aq}+(a^2-a)\frac{Dq^{1+\beta}}{2\sqrt{2}}<aI_q,
\end{equation*}
which implies \eqref{subh} for $a\in(1,2]$, but also that $I_q>0$, for $q\in(0,q_0)$, proving the first inequality in \eqref{suba}. For the general case when $a>1$, we choose $l\in\mathbb{N}$ sufficiently big so that $a\in(1,2^l]$. Then $a^{1/l}\in(1,2]$, and so
\begin{equation*}
I_{aq}=I_{a^{1/l}a^{(l-1)/l}q}<a^{1/l}I_{a^{(l-1)/l}q}=a^{1/l}I_{a^{1/l}a^{(l-2)/l}q}<a^{2/l}I_{a^{(l-2)/l}q}<\ldots<aI_q.
\end{equation*}
\end{proof}

\section{Existence of minimizers}\label{ex-min}

In order to establish the existence of minimizers, we will apply the concentration-compactness principle (Theorem \ref{T.concentration-compactness}) to $e_n=u_n^2$, where $\{u_n\}_{n=1}^\infty$ is a minimizing sequence. The idea is to show that the vanishing and dichotomy scenarios cannot occur and then prove the existence of a minimizer using concentration. We start by excluding the vanishing scenario.
\begin{proposition}\label{vanishing-prop}
Vanishing does not  occur. 
\end{proposition}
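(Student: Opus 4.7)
The strategy is to argue by contradiction. Suppose vanishing holds for $e_n = u_n^2$, where $\{u_n\}$ is a minimizing sequence. The plan is to show that vanishing forces $\liminf_n \mathcal{E}(u_n) \geq m(0)q$, which contradicts the strict upper bound $I_q < m(0)q - Dq^{1+\beta}$ from Proposition \ref{near-min}. The central observation is that the vanishing of $u_n$ in the local $L^2$-sense transfers, via the convolution representation of $L^{-1/2}$ and the integrability hypotheses on its kernel, to uniform smallness of $L^{-1/2}u_n$ on the whole line.

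The main step is to prove $\norm{L^{-1/2}u_n}_{L^\infty(\mathbb{R})}\to 0$. I would write $L^{-1/2}u_n = K_L * u_n$ with $K_L = \mathcal{F}^{-1}(m^{-1/2})$, and for given $\delta > 0$ split the convolution at $|y|=\varepsilon$ and $|y|=R$, estimating the three resulting pieces separately and uniformly in $x \in \mathbb{R}$. For the near piece $|y|<\varepsilon$, H\"older's inequality together with \eqref{int-cond2} gives a bound $\norm{K_L}_{L^p(-\varepsilon,\varepsilon)}\norm{u_n}_{L^{p'}(\mathbb{R})}$; the Sobolev embedding $H^{s/2}\hookrightarrow L^{p'}$, valid precisely because $p\geq 2/(s+1)$, together with the uniform bound $\norm{u_n}_{H^{s/2}}\lesssim\sqrt{q}$ from Proposition \ref{near-min-est-prop}, renders this piece smaller than $\delta/3$ once $\varepsilon$ is chosen small enough. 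For the far piece $|y|>R$, Cauchy--Schwarz and \eqref{int-cond} give $\norm{K_L}_{L^2(|y|>R)}\norm{u_n}_{L^2}$, which is smaller than $\delta/3$ for $R$ large, independent of $n$ and $x$. With $\varepsilon, R$ now fixed, Cauchy--Schwarz estimates the middle piece by $\norm{K_L}_{L^2(\varepsilon\leq|y|\leq R)}\norm{u_n}_{L^2(x-R,x+R)}$, and the vanishing hypothesis makes the local $L^2$-norm of $u_n$ uniformly small in $x$ for all sufficiently large $n$.

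Once $\norm{L^{-1/2}u_n}_{L^\infty}\to 0$ is established, the rest is immediate. Lemma \ref{Nc-estimate} yields $|\mathcal{N}_c(u_n)|\lesssim\norm{u_n}_{L^2}^2\norm{L^{-1/2}u_n}_{L^\infty}\to 0$, and the interpolation
\[
\norm{L^{-1/2}u_n}_{L^4}^4 \leq \norm{L^{-1/2}u_n}_{L^\infty}^2 \norm{L^{-1/2}u_n}_{L^2}^2,
\]
together with $\norm{L^{-1/2}u_n}_{L^2}^2\leq m(0)^{-1}\norm{u_n}_{L^2}^2\lesssim q$, gives $\mathcal{N}_r(u_n)\to 0$. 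Since $m(\xi)\geq m(0)$ forces $\mathcal{L}(u_n)\geq m(0)q$ on $U_q$, we obtain $\liminf_n \mathcal{E}(u_n)\geq m(0)q$, the required contradiction with Proposition \ref{near-min}.

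The principal obstacle is the three-piece splitting of the convolution in the first step: this is the only place where \eqref{int-cond} and \eqref{int-cond2} enter the analysis, and the lower bound $p\geq 2/(s+1)$ is calibrated so that the Sobolev embedding exactly matches the integrability of $K_L$ near the origin. Everything after that reduction is essentially bookkeeping with the estimates collected in Section \ref{technical-results}.
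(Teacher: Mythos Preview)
Your argument is correct and shares the paper's central mechanism: the three-piece splitting of the convolution $L^{-1/2}u_n = K_L * u_n$ using the kernel conditions \eqref{int-cond}--\eqref{int-cond2}, which is indeed the only place these hypotheses enter.

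The organization, however, differs in a way worth noting. The paper first derives a \emph{lower} bound $q^\beta \lesssim \norm{L^{-1/2}u_n}_{L^\infty}$ from Lemma~\ref{Nc-estimate} and \eqref{N_c-est}, then invokes a Gagliardo--Nirenberg-type interpolation (borrowed from \cite{Duchene_Nilsson_Wahlen}) to pass from $L^\infty$ to local $L^2$, and finally runs the three-piece splitting on $\norm{L^{-1/2}u_n}_{L^2(\tilde x-1,\tilde x+1)}$ to show that vanishing of $u_n$ forces local-$L^2$ vanishing of $L^{-1/2}u_n$. You instead apply the three-piece splitting \emph{pointwise}, obtaining $\norm{L^{-1/2}u_n}_{L^\infty}\to 0$ directly, and then close by showing $\mathcal{N}_c(u_n),\mathcal{N}_r(u_n)\to 0$ so that $\liminf_n \mathcal{E}(u_n)\geq \mathcal{L}(u_n)\geq m(0)q$. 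Your route is shorter: it bypasses the interpolation lemma entirely and makes the contradiction with Proposition~\ref{near-min} more transparent. The paper's route, on the other hand, isolates the quantitative non-vanishing bound $q^{\beta-1/(4s)}\lesssim \sup_x \norm{L^{-1/2}u_n}_{L^2(x-1,x+1)}^{1-1/(2s)}$, which is not needed here but could be useful if one later wanted uniform-in-$q$ concentration estimates.
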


\begin{proof}

	Let $\{u_n\}_{n=1}^\infty\subseteq U_q$ be a minimizing sequence of $\mathcal{E}$. We point out here that since $L^{-1/2}u_n\in H^s(\mathbb{R})$ and $s>1/2$, we have that $L^{-1/2}u_n\in L^{\infty}(\mathbb{R})$.
%
	By Lemma \ref{Nc-estimate} we have
	\[
		| \mathcal N_c(u) | \lesssim
		\lVert u \rVert_{L^2 (\mathbb R)}^2
		\lVert L^{-1/2}u \rVert_{L^{\infty} (\mathbb R)}
	\]
	and so for a minimizing sequence
	\[
		q^{\beta} \lesssim
		\lVert L^{-1/2}u_n \rVert_{L^{\infty} (\mathbb R)}
		,
	\]
where we used \eqref{N_c-est}.
	Arguing as in the proof of \cite[Lemma 4.5]{Duchene_Nilsson_Wahlen}, we have for any $x \in \mathbb R$ that
	\begin{multline*}
		\lVert L^{-1/2}u_n \rVert_{L^{\infty} (x - 1, x + 1)}
		\lesssim
		\lVert L^{-1/2}u_n \rVert_{L^2 (x - 1, x + 1)}
		^{1 - 1/(2s)}
		\lVert L^{-1/2}u_n \rVert_{H^s (\mathbb R)}
		^{1/(2s)}
		\\
		\lesssim
		\lVert L^{-1/2}u_n \rVert_{L^2 (x - 1, x + 1)}
		^{1 - 1/(2s)}
		\lVert u_n \rVert_{H^{\frac{s}{2}} (\mathbb R)}
		^{1/(2s)}
		\lesssim
		q^{1/(4s)}
		\lVert L^{-1/2}u_n \rVert_{L^2 (x - 1, x + 1)}
		^{1 - 1/(2s)},
	\end{multline*}
	and hence
	\[
		q^{\beta - 1/(4s)}
		\lesssim
		\sup_{x \in \mathbb R}
		\lVert L^{-1/2}u_n \rVert_{L^2 (x - 1, x + 1)}
		^{1 - 1/(2s)},
	\]
	which means that $L^{-1/2}u_n$ cannot vanish.
	Now we  show that $L^{-1/2}u_n$ is vanishing if
	one assumes that $u_n$ is vanishing.
	In order to do this we start by decomposing
\begin{align*}
(L^{-1/2}u_n)(x)&=(\mathcal{F}^{-1}(m^{-1/2}) * u_n)(x)\\
&=\int_\mathbb{R}\mathcal{F}^{-1}(m^{-1/2})(y)u_n(x-y)\ \mathrm{d}y\\
&=\underbrace{\int_{\abs{y}<\epsilon}\mathcal{F}^{-1}(m^{-1/2})(y)u_n(x-y)\ \mathrm{d}y}_{=:I_1}+\underbrace{\int_{\epsilon\leq \abs{y}\leq R}\mathcal{F}^{-1}(m^{-1/2})(y)u_n(x-y)\ \mathrm{d}y}_{=:I_2}\\
&\quad+\underbrace{\int_{\abs{y}\geq R}\mathcal{F}^{-1}(m^{-1/2})(y)u_n(x-y)\ \mathrm{d}y}_{=:I_3},
\end{align*}
and so
\begin{equation*}
\norm{L^{-1/2}u_n}_{L^2(\tilde{x}-1,\tilde{x}+1)}\leq \norm{I_1}_{L^2(\tilde{x}-1,\tilde{x}+1)}+\norm{I_2}_{L^2(\tilde{x}-1,\tilde{x}+1)}+\norm{I_3}_{L^2(\tilde{x}-1,\tilde{x}+1)}.
\end{equation*}
The goal is then to show that each of the above integrals can be made arbitrarily small.

By assumption there exists
\(
	p \in (1, 2) \cap [2 / (s+1), 2)
\)
such that \eqref{int-cond2} holds, and so\newline
\(
	\left\lVert
		\mathcal F^{-1} \left(m^{-1/2}\right)
	\right\rVert _{L^p(- \varepsilon, \varepsilon)}
	= o(1)
\)
as $\varepsilon \to 0$.
On the other hand its dual number $p'$ satisfies
condition
\(
	1/2 - 1/p' \leqslant s/2
\)
resulting in the embedding
\(
	H^{\frac{s}{2}}(\mathbb R) \hookrightarrow L^{p'}(\mathbb R)
	.
\)
Thus applying H\"older's inequality to $I_1$ yields
\[
	\norm{I_1}_{L^2(\tilde{x}-1,\tilde{x}+1)}^2
	\leqslant
	\int _{\tilde{x}-1}^{\tilde{x}+1}
	\left\lVert
		\mathcal F^{-1} \left(m^{- 1/2}\right)
	\right\rVert _{L^p(- \varepsilon, \varepsilon)}^2
	\lVert u_n \rVert _{L^{p'}(\mathbb R)}^2
	\ \mathrm{d}x
	= o(1)
	\mbox{ as } \varepsilon \to 0.
\]
For  $I_3$ we apply the Cauchy–Schwarz inequality
as follows
\[
	\norm{I_3}_{L^2(\tilde{x}-1,\tilde{x}+1)}^2
	\leqslant
	\int _{\tilde{x}-1}^{\tilde{x}+1}
	\left\lVert
		\mathcal F^{-1} \left(m^{- 1/2}\right)
	\right\rVert _{L^2( \mathbb R \setminus (- R, R) )}^2
	\lVert u_n \rVert _{L^2(\mathbb R)}^2
	\ \mathrm{d}x
	= o(1)
	\mbox{ as } R \to \infty.
\]
After choosing $\varepsilon$, $R$ we turn our attention
to $I_2$
\begin{multline*}
	\norm{I_2}_{L^2(\tilde{x}-1,\tilde{x}+1)}^2
	\leqslant
	\int _{\tilde{x}-1}^{\tilde{x}+1}
	\left\lVert
		\mathcal F^{-1} \left(m^{- 1/2}\right)
	\right\rVert _{L^2( (- R, R) \setminus (- \varepsilon, \varepsilon) )}^2
	\lVert u_n(x - y) \rVert _{L^2( \varepsilon < |y| < R )}^2
	\ \mathrm{d}x
	\\
	\leqslant
	C(\varepsilon, R)
	\lVert u_n \rVert _{L^2( -1 - |\tilde{x}| - R , 1 + |\tilde{x}| + R )}^2
	\to 0
	\mbox{ as } n \to \infty,
\end{multline*}
if one assumes vanishing of $u_n$.
\end{proof}

We next turn our attention to the dichotomy scenario.
\begin{proposition}\label{D-prop}
Dichotomy cannot occur.
\end{proposition}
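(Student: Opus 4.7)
The plan is to apply the classical concentration-compactness splitting, decomposing $u_n$ into a piece concentrated at scale $M_n$ around $x_n$ and a piece supported far from $x_n$, to show that $\mathcal E$ is asymptotically additive with respect to this decomposition, and to derive a contradiction with the strict subadditivity from Proposition \ref{subadditivity}.

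First I would fix a smooth cutoff $\varphi \in C_c^\infty(\mathbb R)$ with $\varphi \equiv 1$ on $[-1,1]$ and $\operatorname{supp}\varphi \subset (-2,2)$, and pick intermediate scales $M'_n, N'_n$ with $M_n \leq M'_n$, $2M'_n \leq N'_n \leq N_n/2$, and $M'_n, N'_n \to \infty$ (possible since $M_n/N_n \to 0$). Setting $\varphi_n(x) := \varphi((x-x_n)/M'_n)$ and $\tilde\varphi_n(x) := \varphi((x-x_n)/N'_n)$, define
\[
u_n^{(1)} := \varphi_n u_n, \quad u_n^{(2)} := (1-\tilde\varphi_n) u_n, \quad r_n := u_n - u_n^{(1)} - u_n^{(2)} = (\tilde\varphi_n - \varphi_n) u_n.
\]
Then $\operatorname{supp} u_n^{(1)}$ and $\operatorname{supp} u_n^{(2)}$ are disjoint, and $r_n$ is supported in the annulus $\{M'_n \leq |x-x_n| \leq 2N'_n\}$, which lies inside the dichotomy gap $\{M_n \leq |x-x_n| \leq N_n\}$. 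Hence $\|r_n\|_{L^2} \to 0$, and by monotonicity of the mass distribution one gets $\mathcal Q(u_n^{(1)}) \to q_1 := I^*/2$ and $\mathcal Q(u_n^{(2)}) \to q_2 := q - q_1$, both in $(0, q)$.

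The key step is to show $\mathcal E(u_n) = \mathcal E(u_n^{(1)}) + \mathcal E(u_n^{(2)}) + o(1)$ by handling $\mathcal L$, $\mathcal N_c$, $\mathcal N_r$ separately, using two ingredients: (i) the pointwise vanishing of $\varphi_n(1-\tilde\varphi_n) \equiv 0$; and (ii) the vanishing of the commutator norms $\|[L,\tilde\varphi_n]\|_{H^{s/2}\to H^{s/2}}$ and $\|[L^{-1/2},\varphi_n]\|_{L^2\to L^2}$ provided by Lemma \ref{commutator} (the translation by $x_n$ being immaterial since $L$ commutes with translations). For the cross term of $\mathcal L$, write $\int u_n^{(1)} L u_n^{(2)} = \int u_n^{(1)}(1-\tilde\varphi_n)\,L u_n + \int u_n^{(1)}[L,1-\tilde\varphi_n] u_n$; the first integral vanishes by disjoint supports and the second is $o(1)$ by the commutator smallness and the $H^{s/2}$-boundedness of $u_n$. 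The remainder terms involving $r_n$ are $o(1)$ using $\|r_n\|_{L^2}\to 0$ together with the uniform bounds $\|L^{1/2}u_n\|_{L^2} \lesssim \|u_n\|_{H^{s/2}}$ and $\|L^{-1/2}u_n\|_{L^\infty} \lesssim \|u_n\|_{H^{s/2}}$ (the latter valid since $s>1/2$). An analogous analysis for $\mathcal N_c$ and $\mathcal N_r$ uses $L^{-1/2} u_n^{(1)} = \varphi_n L^{-1/2} u_n - [L^{-1/2},\varphi_n] u_n$ and the corresponding identity for $u_n^{(2)}$, so that cross products collapse to $\varphi_n(1-\tilde\varphi_n)(L^{-1/2}u_n)^2 \equiv 0$ plus commutator errors.

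The final step is standard: set $\tilde u_n^{(i)} := \sqrt{q_i/\mathcal Q(u_n^{(i)})}\, u_n^{(i)}$ so that $\mathcal Q(\tilde u_n^{(i)}) = q_i$; since $\mathcal Q(u_n^{(i)})\to q_i > 0$ and $\mathcal E$ is polynomial (hence continuous) on $H^{s/2}$, $\mathcal E(\tilde u_n^{(i)}) = \mathcal E(u_n^{(i)}) + o(1)$. Passing to the limit,
\[
I_q = \lim_{n\to\infty} \mathcal E(u_n) \geq I_{q_1} + I_{q_2},
\]
which contradicts the strict subadditivity of Proposition \ref{subadditivity}. The main difficulty lies in the cubic term $\mathcal N_c$, which couples $L^{1/2}$ and $L^{-1/2}$ with a quadratic nonlinearity; here one combines a Kato--Ponce type estimate (as in Lemma \ref{Nc-estimate}) with the commutator smallness from Lemma \ref{commutator} and an analogous smallness for $[L^{1/2},\varphi_n]$ on $L^2$, which is not literally stated in Lemma \ref{commutator} but follows from the same Fourier computation together with the bound $|m^{1/2}(\xi)-m^{1/2}(\eta)| \lesssim \omega(\xi-\eta)$ implied by Lemma \ref{uniform-cont}.
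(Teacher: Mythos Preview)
Your overall strategy is the standard one and matches the paper's: split a minimizing sequence with cutoffs, prove asymptotic additivity of $\mathcal E$, rescale, and contradict Proposition~\ref{subadditivity}. You also correctly identify Lemma~\ref{commutator} (and its $L^{1/2}$-analogue) as the engine behind the splitting estimates; the paper uses exactly the same tool.

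There is, however, a genuine gap in your treatment of $\mathcal L$. You introduce a remainder $r_n=(\tilde\varphi_n-\varphi_n)u_n$ and claim the $r_n$-contributions to $\mathcal L$ are $o(1)$ ``using $\|r_n\|_{L^2}\to 0$''. But $\mathcal L$ is an $H^{s/2}$-quadratic form: the terms you must control are of the type
\[
\langle u_n^{(i)},Lr_n\rangle=\langle L^{1/2}u_n^{(i)},L^{1/2}r_n\rangle,\qquad \mathcal L(r_n)=\tfrac12\|L^{1/2}r_n\|_{L^2}^2,
\]
and for these you would need $\|L^{1/2}r_n\|_{L^2}\to 0$, i.e.\ $\|r_n\|_{H^{s/2}}\to 0$. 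Writing $L^{1/2}r_n=(\tilde\varphi_n-\varphi_n)L^{1/2}u_n+[L^{1/2},\tilde\varphi_n-\varphi_n]u_n$, the commutator is small, but $(\tilde\varphi_n-\varphi_n)L^{1/2}u_n$ is $L^{1/2}u_n$ restricted to the annulus, and nothing in the dichotomy hypothesis says this is small (dichotomy controls $\|u_n\|_{L^2(\text{gap})}$, not $\|L^{1/2}u_n\|_{L^2(\text{gap})}$). This is precisely why the paper chooses cutoffs with $\chi_1^2+\chi_2^2=1$ at a \emph{single} scale $M_n$: then $(w_n^{(1)})^2+(w_n^{(2)})^2=w_n^2$ exactly, there is no remainder, and
\[
\mathcal L(w_n^{(1)})+\mathcal L(w_n^{(2)})-\mathcal L(w_n)=\tfrac12\sum_i\int \chi_{in}w_n\,[L,\chi_{in}]w_n\,\mathrm{d}x\to 0
\]
follows directly from Lemma~\ref{commutator}. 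Your disjoint-supports trick, which is elegant for killing the direct cross term $\int u_n^{(1)}Lu_n^{(2)}$, comes at the cost of this remainder. The gap is fixable (for instance, only the inequality $\mathcal E(u_n^{(1)})+\mathcal E(u_n^{(2)})\leq\mathcal E(u_n)+o(1)$ is needed, and choosing the cutoffs so that $\tilde\varphi_n-\varphi_n$ is a smooth square lets one exploit $L\ge 0$ and commutator smallness), but as written the step is unjustified.

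For $\mathcal N_c$ and $\mathcal N_r$ your sketch is fine: there $L^{-1/2}$ is smoothing and the estimates $\|L^{-1/2}r_n\|_{L^4}\lesssim\|r_n\|_{L^2}$ together with the uniform $H^{s/2}$-bound do control all remainder terms.
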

\begin{proof}
Let $\chi :\mathbb{R}\rightarrow [0,1]$ be a smooth cutoff function with $\chi(x)=1$, for $\abs{x}\leq 1$ and $\chi(x)=0$, for $\abs{x}\geq 2$, and such that
\begin{equation*}
\chi=\chi_1^2,\quad 1-\chi=\chi_2^2,
\end{equation*}
where $\chi_1,\chi_2$ are smooth. Next, let $w_n(x)=u_n(x-x_n)$ and 
\begin{equation*}
w_n^{(1)}(x)=\underbrace{\chi_1\left(\frac{x}{M_n}\right)}_{=:\chi_{1n}(x)}w_n(x),\quad w_n^{(2)}(x)=\underbrace{\chi_2\left(\frac{x}{M_n}\right)}_{=:\chi_{2n}(x)}w_n(x),
\end{equation*}
Note that from the dichotomy assumption
\begin{align*}
\frac{1}{2}\int_{M_n\leq \abs{x}\leq 2M_n}w_n^2\ \mathrm{d}x&  \leq \frac{1}{2}\int_{M_n\leq \abs{x}\leq N_n}w_n^2\ \mathrm{d}x\\
&=\frac{1}{2}\int_{-N_n}^{N_n}w_n^2\ \mathrm{d}x-\frac{1}{2}\int_{-M_n}^{M_n}w_n^2\ \mathrm{d}x\\
&\rightarrow q^*-q^*\\
&=0.
\end{align*}
Since $\abs{w_n^{i}(x)}\leq \abs{w_n(x)}$, $i=1,2$, it follows directly that $\int_{M_n\leq \abs{x}\leq 2M_n}(w_n^{(i)})^2\ \mathrm{d}x\rightarrow 0$, as $n\rightarrow \infty$.
From this we can then deduce
\begin{equation*}
\frac{1}{2}\int_\mathbb{R}(w_n^{(1)})^2\ \mathrm{d}x=\frac{1}{2}\int_{-M_n}^{M_n}w_n^2\ \mathrm{d}x-\frac{1}{2}\int_{M_n\leq \abs{x}\leq 2M_n}(w_n^{(1)})^2\ \mathrm{d}x\rightarrow q^*,
\end{equation*}
and similarly 
\begin{equation*}
\frac{1}{2}\int_{\mathbb{R}}(w_n^{(2)})^2\ \mathrm{d}x=\frac{1}{2}\int_\mathbb{R}w_n^2 \mathrm{d}x-\frac{1}{2}\int_{-2M_n}^{2M_n}w_n^2\ \mathrm{d}x+\frac{1}{2}\int_{M_n\leq \abs{x}\leq 2M_n}(w_n^{(2)})^2\ \mathrm{d}x\rightarrow q-q^*.
\end{equation*}
We next show that
\begin{equation}\label{dichotomy-limit-main}
\mathcal{E}(w_n^{(1)})+\mathcal{E}(w_n^{(2)})-\mathcal{E}(w_n)\rightarrow 0,\ n\rightarrow \infty.
\end{equation}
As a first step towards this, we show that
\begin{equation}\label{dichotomy-limit-l}
\mathcal{L}(w_n^{(1)})+\mathcal{L}(w_n^{(2)})-\mathcal{L}(w_n)\rightarrow 0, \ n\rightarrow \infty
\end{equation}
Indeed, note that
\begin{equation*}
\mathcal{L}(w_n^{(1)})+\mathcal{L}(w_n^{(2)})-\mathcal{L}(w_n)=\frac{1}{2}\int_\mathbb{R}w_n^{(1)}Lw_n^{(1)}+w_n^{(2)}Lw_n^{(2)}-(\chi_{1n}^2+\chi_{2n}^2)w_nLw_n\ \mathrm{d}x,
\end{equation*}
and using Lemma \ref{commutator} we find that
\begin{align*}
\int_\mathbb{R}w_n^{(1)}Lw_n^{(1)}-\chi_{1n}^2w_nLw_n\ \mathrm{d}x&=\int_\mathbb{R}\chi_{1n}w_n(L(\chi_{1n}w_n)-\chi_{1n}Lw_n)\ \mathrm{d}x\\
&=\int_\mathbb{R}\chi_{1n}w_n[L,\chi_{1n}]\ \mathrm{d}x\\
&\rightarrow 0, \ n\rightarrow \infty
\end{align*}
In the same way we find that
\begin{equation*}
\int_\mathbb{R}w_n^{(2)}Lw_n^{(2)}-\chi_{2n}^2w_nLw_n\ \mathrm{d}x=\int_\mathbb{R}\chi_{2n}w_n[L,\chi_{2n}-1]w_n\ \mathrm{d}x\rightarrow 0, \ n\rightarrow \infty,
\end{equation*}
hence, \eqref{dichotomy-limit-l} holds.
The next step is to show that
\begin{equation}\label{dichotomy-limit-n}
\mathcal{N}(w_n^{(1)})+\mathcal{N}(w_n^{(2)})-\mathcal{N}(w_n)\rightarrow 0, \ n\rightarrow \infty,
\end{equation}
and for this we use the decomposition $\mathcal{N}=\mathcal{N}_c+\mathcal{N}_r$, and show that
\begin{align}
\mathcal{N}_c(w_n^{(1)})+\mathcal{N}_c(w_n^{(2)})-\mathcal{N}_c(w_n)&\rightarrow 0 \ n\rightarrow \infty\label{dichotomy-limit-nc},\\
\mathcal{N}_r(w_n^{(1)})+\mathcal{N}_r(w_n^{(2)})-\mathcal{N}_r(w_n)&\rightarrow 0 \ n\rightarrow \infty\label{dichotomy-limit-nr}.
\end{align}
Starting with \eqref{dichotomy-limit-nc}, we note that
\begin{align*}
\mathcal{N}_c(w_n^{(1)})+\mathcal{N}_c(w_n^{(2)})-\mathcal{N}_c(w_n)&=\frac{1}{2}\int_\mathbb{R}\bigg(L^{1/2}w_n^{(1)}(L^{-1/2}w_n^{(1)})^2+L^{1/2}w_n^{(2)}(L^{-1/2}w_n^{(2)})^2\\
&\qquad-(\chi_{1n}^2+\chi_{2n}^2)L^{1/2}w_n(L^{-1/2}w_n)^2\bigg)\ \mathrm{d}x,
\end{align*}
furthermore
\begin{align*}
&\int_\mathbb{R}L^{1/2}w_n^{(1)}(L^{-1/2}w_n^{(1)})^2-\chi_{1n}^2L^{1/2}w_n(L^{-1/2}w_n)^2\ \mathrm{d}x\\
&\quad=\int_\mathbb{R}L^{1/2}w_n^{(1)}(L^{-1/2}w_n^{(1)})^2-\chi_{1n}^2L^{1/2}w_n^{(1)}(L^{-1/2}w_n)^2\ \mathrm{d}x\\
&\qquad+\int_\mathbb{R}\chi_{1n}^2L^{1/2}w_n^{(1)}(L^{-1/2}w_n)^2-\chi_{1n}^2L^{1/2}w_n(L^{-1/2}w_n)^2\ \mathrm{d}x,
\end{align*}
and using Lemma \ref{commutator} we find that
\begin{align*}
&\int_\mathbb{R}L^\frac{1}{2}w_n^{(1)}(L^{-1/2}w_n^{(1)})^2-\chi_{1n}^2L^{1/2}w_n^{(1)}(L^{-1/2}w_n)^2\ \mathrm{d}x\\
&=\int_\mathbb{R}L^{1/2}w_n^{(1)}((L^{-1/2}w_n^{(1)})^2-\chi_{1n}^2(L^{-1/2}w_n)^2)\ \mathrm{d}x\\
&=\int_\mathbb{R}L^{1/2}w_n^{(1)}(L^{-1/2}w_n^{(1)}+\chi_{1n}L^{-1/2}w_n)(L^{-1/2}w_n^{(1)}-\chi_{1n}L^{-1/2}w_n)\ \mathrm{d}x\\
&=\int_\mathbb{R}L^{1/2}w_n^{(1)}(L^{-1/2}w_n^{(1)}+\chi_{1n}L^{-1/2}w_n)[L^{-1/2},\chi_{1n}]w_n\ \mathrm{d}x\\
&\rightarrow 0, \ n\rightarrow 0,
\end{align*}
and
\begin{align*}
&
\int_\mathbb{R}\chi_{1n}^2L^{1/2}w_n^{(1)}(L^{-1/2}w_n)^2-\chi_{1n}^2L^{1/2}w_n(L^{-1/2}w_n)^2\ \mathrm{d}x
\\
&=\int_\mathbb{R}\chi^2_{1n}L^{1/2}(w_n^{(1)}-w_n)(L^{-1/2}w_n)^2\ \mathrm{d}x\\
&=\int_\mathbb{R}L^{1/2}(\chi_{1n}(w_n^{(1)}-w_n))\chi_{1n}(L^{-1/2}w_n)^2\ \mathrm{d}x\\
&\quad-\int_\mathbb{R}[L^{1/2},\chi_{1n}](w_n^{(1)}-w_n)\chi_{1n}(L^{-1/2}w_n)^2\ \mathrm{d}x,
\end{align*}
where
\begin{align*}
&
\left|
\int_\mathbb{R}L^{1/2}(\chi_{1n}(w_n^{(1)}-w_n))\chi_{1n}(L^{-1/2}w_n)^2\ \mathrm{d}x
\right|
\\
&=
\left|
\int_\mathbb{R}\chi_{1n}(\chi_{1n}-1)w_nL^{1/2}(\chi_{1n}(L^{-1/2}w_n)^2)\ \mathrm{d}x
\right|
\\
&\leq \norm{\chi_{1n}(\chi_{1n}-1)w_n}_{L^2}\norm{L^{1/2}(\chi_{1n}(L^{-1/2}w_n)^2)}_{L^2}\\
&\lesssim \norm{w_n}_{L^2([-2M_n,-M_n]\cup[M_n,2M_n])}\norm{w_n}_{H^{s/2}}^2\\
&\rightarrow 0, \ n\rightarrow \infty,
\end{align*}
and $\int_\mathbb{R}[L^{1/2},\chi_{1n}](w_n^{(1)}-w_n)\chi_{1n}(L^{-1/2}w_n)^2\ \mathrm{d}x\rightarrow 0, \ n\rightarrow \infty$, according to Lemma \ref{commutator}.
Hence $\lim_{ n\rightarrow \infty}\int_\mathbb{R}L^{1/2}w_n^{(1)}(L^{-1/2}w_n^{(1)})^2-\chi_{1n}^2L^{1/2}w_n(L^{-1/2}w_n)^2\ \mathrm{d}x= 0,$ and in the same way we can show that $\lim_{ n\rightarrow \infty}\int_\mathbb{R}L^{1/2}w_n^{(2)}(L^{-1/2}w_n^{(2)})^2-\chi_{2n}^2L^{1/2}w_n(L^{-1/2}w_n)^2\ \mathrm{d}x= 0,$ which implies \eqref{dichotomy-limit-nc}. The limit \eqref{dichotomy-limit-nr} can be shown using similar techniques as \eqref{dichotomy-limit-nc} and we therefore omit the details.

We conclude that \eqref{dichotomy-limit-n} holds, which together with \eqref{dichotomy-limit-l} implies \eqref{dichotomy-limit-main}. Since $\{w_n\}_{n=1}^\infty$ is a minimizing sequence, we get that
\begin{equation}\label{lim1}
\lim_{n\rightarrow \infty}\mathcal{E}(w_n^{(1)})+\mathcal{E}(w_n^{(1)})\rightarrow I_q.
\end{equation}
However,
\begin{equation}\label{lim2}
\lim_{n\rightarrow \infty}
\left(
	\mathcal{E}\left(v_n^{(i)}\right)
	-
	\mathcal{E}\left(w_n^{(i)}\right)
\right)
=0, \ i=1,2,
\end{equation}
where $v_n^{(1)}=\sqrt{q^*/Q(w_n^{(1)})}w_n^{(1)}, \ v_n^{(2)}=\sqrt{(q-q^*)/Q(w_n^{(2)})}w_{n}^{(2)}$. By construction $v_n^{(1)}\in U_{q^*}$, $v_n^{(2)}\in U_{q-q^*}$, and so using \eqref{lim1}, \eqref{lim2}, we find that 
\begin{equation*}
I_q= \lim_{n\rightarrow \infty}\mathcal{E}(v_n^{(1)})+\mathcal{E}(v_n^{(1)})\geq I_{q^*}+I_{q-q^*},
\end{equation*}
which contradicts Proposition \ref{subadditivity}.
\end{proof}

\begin{proposition}\label{C-prop}
	There exists $u \in U_q$ solving minimization problem
	$\mathcal E(u) = I_q$.
\end{proposition}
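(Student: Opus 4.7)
The plan is to invoke the concentration-compactness trichotomy on $e_n = u_n^2$ for a minimizing sequence $\{u_n\} \subset U_q$ and exploit the two previous propositions to force concentration. Since Proposition \ref{vanishing-prop} excludes vanishing and Proposition \ref{D-prop} excludes dichotomy, Theorem \ref{T.concentration-compactness} produces a sequence $\{x_n\} \subset \mathbb{R}$ such that for every $\epsilon > 0$ there exists $r > 0$ with $\int_{x_n - r}^{x_n + r} u_n^2 \, \mathrm{d}x \geqslant 2q - \epsilon$ uniformly in $n$. By translation invariance of $\mathcal{E}$ and $\mathcal{Q}$ (Lemma \ref{translation_invariance}), the shifted sequence $w_n(x) = u_n(x + x_n)$ is also a minimizing sequence in $U_q$ for which the concentration sets can be taken centered at the origin.

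Next I would extract a candidate minimizer. By Proposition \ref{near-min-est-prop} the sequence $\{w_n\}$ is uniformly bounded in $H^{s/2}(\mathbb{R})$, so along a subsequence (not relabeled) we obtain $w_n \rightharpoonup u$ weakly in $H^{s/2}(\mathbb{R})$ for some $u \in H^{s/2}(\mathbb{R})$. The Rellich-Kondrachov theorem gives $w_n \to u$ strongly in $L^2_{\mathrm{loc}}(\mathbb{R})$, so passing to the limit in the concentration inequality yields $\int_{-r}^{r} u^2 \, \mathrm{d}x \geqslant 2q - \epsilon$. Combined with $\|u\|_{L^2}^2 \leqslant \liminf \|w_n\|_{L^2}^2 = 2q$ (weak lower semicontinuity), sending $\epsilon \to 0$ forces $\|u\|_{L^2}^2 = 2q$, hence $u \in U_q$. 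Moreover, this equality of norms together with weak convergence in the Hilbert space $L^2$ upgrades to strong convergence $w_n \to u$ in $L^2(\mathbb{R})$.

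Finally, I need $\mathcal{E}(u) \leqslant \liminf \mathcal{E}(w_n) = I_q$, which combined with $u \in U_q$ yields $\mathcal{E}(u) = I_q$. The quadratic part $\mathcal{L}$ is weakly lower semicontinuous on $H^{s/2}$ because $\mathcal{L}(u) \simeq \|u\|_{H^{s/2}}^2$ (Proposition \ref{L-est}). For the nonlinear terms, observe that $L^{-1/2}$ is a bounded map $L^2 \to H^{s/2}$, and since $s > 1/2$ the embedding $H^{s/2} \hookrightarrow L^4$ holds; thus $L^{-1/2}w_n \to L^{-1/2}u$ strongly in $L^4$, so $\mathcal{N}_r(w_n) \to \mathcal{N}_r(u)$. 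For $\mathcal{N}_c$, write it as the $L^2$-pairing
\[
\mathcal{N}_c(w_n) = \tfrac{1}{2}\langle L^{1/2}w_n,\, (L^{-1/2}w_n)^2\rangle_{L^2}.
\]
Weak $H^{s/2}$-convergence of $w_n$ yields weak $L^2$-convergence of $L^{1/2}w_n$, and the strong $L^4$-convergence of $L^{-1/2}w_n$ produces strong $L^2$-convergence of $(L^{-1/2}w_n)^2$; the weak-strong duality then gives $\mathcal{N}_c(w_n) \to \mathcal{N}_c(u)$.

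The main technical point is ensuring convergence of $\mathcal{N}_c$, where $L^{1/2}w_n$ only converges weakly: one cannot avoid placing the order-$s/2$ operator on the factor with merely weak convergence, so the proof hinges on rewriting $\mathcal{N}_c$ as a duality pairing and pushing the $L^4$-regularity through $L^{-1/2}$. Everything else is routine lower semicontinuity plus the local $L^2$-control inherited from concentration.
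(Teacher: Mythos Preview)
Your proof is correct and follows the same concentration--compactness strategy as the paper: exclude vanishing and dichotomy, translate to concentrate at the origin, extract a weak $H^{s/2}$ limit, verify $u\in U_q$, and combine weak lower semicontinuity of $\mathcal{L}$ with convergence of $\mathcal{N}$. The only technical differences are that (i) you obtain strong $L^2$ convergence via Rellich--Kondrachov plus tightness and norm matching, whereas the paper applies the Fr\'echet--Kolmogorov criterion directly; and (ii) for $\mathcal{N}_c$ you use a weak--strong duality pairing (weak $L^2$ convergence of $L^{1/2}w_n$ against strong $L^2$ convergence of $(L^{-1/2}w_n)^2$), while the paper splits $\mathcal{N}_c(u)-\mathcal{N}_c(u_n)$ and bounds each piece by $\lVert u-u_n\rVert_{L^2}$ using product estimates in $H^{s/2}$. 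Both variants are standard and equally effective; your duality argument is arguably cleaner since it avoids the Kato--Ponce-type product bounds.
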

\begin{proof}
By the concentration-compactness principle our
minimizing sequence
\(
	e_n = u_n^2 \in L^1 (\mathbb R)
	,
\)
$n \in \mathbb N$,
concentrates.
Moreover, due to the translation invariance
one can assume that it concentrates around zero,
and so
\[
	\int _{|x| > r} u_n^2(x)\ \mathrm{d}x \to  0
	\mbox{ uniformly with respect to }
	n \in \mathbb N
	\mbox{ as } r \to \infty
	.
\]
In addition, $\{u_n\}_{n=1}^\infty$ is a bounded sequence in $H^{\frac{s}{2}}(\mathbb{R})$ due to Proposition \ref{near-min-est-prop},
and so
\[
	\lVert (u_n)_h - u_n \rVert _{L^2}^2
	\lesssim
	q \left \lVert
		\xi \mapsto | e^{i\xi h} - 1 |
		\langle \xi \rangle ^{-s/2}
	\right \rVert
	_{L^{\infty}}^2
\]
that tends to zero uniformly with respect to
$ n \in \mathbb N $ as $h \to 0$.
Taking into account the boundedness of
$\{u_n\}_{n=1}^\infty$ in $L^2(\mathbb{R})$ one deduces from
the Frechet--Kolmogorov theorem
that $\{u_n\}_{n=1}^\infty$ is relatively compact in $L^2(\mathbb{R})$.
Thus we can assume that $\{u_n\}_{n=1}^\infty$ converges to some $u$ in $L^2(\mathbb{R})$.
Again using that $\{u_n\}_{n=1}^\infty$ is bounded in $H^{\frac{s}{2}}(\mathbb{R})$ , we may in addition assume that $u_n$
converges weakly in $H^{\frac{s}{2}}(\mathbb{R})$ to $u$.
Hence $u \in U_q$ and it is left to check
that it solves the minimization problem.

Firstly, applying the weak lower semi-continuity argument
we deduce
\[
	\mathcal L(u) \leqslant \liminf _{n \to \infty} \mathcal L(u_n).
\]
Indeed, the square root of $\mathcal L(u)$ defines a norm
in $H^{\frac{s}{2}}(\mathbb{R})$, equivalent to the standard Sobolev norm.
By the Mazur theorem a closed ball is weakly closed.
The latter property implies the weak lower semi-continuity
of the functional $\mathcal L$.

It is left to show that $\mathcal N(u_n)$
tends to $\mathcal N(u)$ as $n \to \infty$.
The cubic part is estimated as
\begin{multline*}
	\left| \mathcal N_c(u) - \mathcal N_c(u_n) \right|
	\leqslant
	\frac 12
	\left|
		\int_\mathbb{R} \left( L^{-1/2} u \right)^2 L^{1/2} (u- u_n)
	\ \mathrm{d}x\right|
	\\
	+
	\frac 12
	\left|
		\int_\mathbb{R}
		\left(
			\left( L^{-1/2} u \right)^2
			-
			\left( L^{-1/2} u_n \right)^2
		\right)		
		L^{1/2} u_n
	\ \mathrm{d}x\right|
	=
	\frac 12
	\left|
		\int_\mathbb{R} (u- u_n) L^{1/2} \left( L^{-1/2} u \right)^2
	\ \mathrm{d}x\right|
	\\
	+
	\frac 12
	\left|
		\int_\mathbb{R}
		\left(
			L^{-1/2} (u - u_n)
		\right)		
		\left(
			L^{-1/2} (u + u_n)
		\right)		
		L^{1/2} u_n
	\ \mathrm{d}x\right|
	\lesssim
	\left \lVert u - u_n \right \rVert _{L^2}
	\left \lVert
		L^{1/2} \left( L^{-1/2} u \right)^2
	\right \rVert _{L^2}
	\\
	+
	\left \lVert
		L^{-1/2} (u - u_n)
	\right \rVert _{H^{\frac{s}{2}}}
	\left \lVert
		L^{-1/2} (u + u_n)
	\right \rVert _{H^{\frac{s}{2}}}
	\left \lVert
		L^{1/2} u_n
	\right \rVert _{L^2}
	\lesssim q
	\left \lVert u - u_n \right \rVert _{L^2}
\end{multline*}
which tends to zero as $n \to \infty$.
For the remainder we have
\begin{multline*}
	\left| \mathcal N_r(u) - \mathcal N_r(u_n) \right|
	\\
	=
	\frac 18
	\left|
		\int_\mathbb{R}
		\left(
			L^{-1/2} (u - u_n)
		\right)		
		\left(
			L^{-1/2} (u + u_n)
		\right)		
		\left(
			\left( L^{-1/2} u \right)^2
			+
			\left( L^{-1/2} u_n \right)^2
		\right)		
	\ \mathrm{d}x\right|
	\\
	\lesssim
	\left \lVert
		L^{-1/2} (u - u_n)
	\right \rVert _{H^{s/2}}
	\left \lVert
		L^{-1/2} (u + u_n)
	\right \rVert _{H^{s/2}}
	\left(
		\left \lVert
			L^{-1/2} u
		\right \rVert _{L^4}^2
		+
		\left \lVert
			L^{-1/2} u_n
		\right \rVert _{L^4}^2
	\right)
	\\
	\lesssim q ^{3/2}
	\left \lVert u - u_n \right \rVert _{L^2}
\end{multline*}
that tends to zero as $n \to \infty$.
Summing up we obtain
\[
	I_q \leqslant \mathcal E(u)
	\leqslant \liminf _{n \to \infty}\mathcal{E}(u_n) = I_q
\]
which concludes the proof.
\end{proof}
We finish the proof of Theorem \ref{main-res-1} by proving the estimate. Let $u$ be a minimizer. We know that $u$ satisfies the Euler--Lagrange equation
\begin{equation*}
\lambda u+\mathrm{d}\mathcal{E}(u)=0.
\end{equation*}
Taking the inner product in this equation with $u$ yields
\begin{align}
-2\lambda q&=\mathrm{d}\mathcal{E}(u)(u)\nonumber\\
&=2\mathcal{L}(u)+3\mathcal{N}_c(u)+4\mathcal{N}_r(u)\nonumber\\
&=-\mathcal{L}(u)+3\mathcal{E}(u)+4\mathcal{N}_r(u)\label{lambda-est-calc}
\end{align}
Since $\mathcal{L}(u)\geq m(0) q$ and $\abs{\mathcal{N}_c(u)}=\mathcal{O}(q^{3/2})$, $\abs{\mathcal{N}_r(u)}=\mathcal{O}(q^2)$ by Proposition \ref{N-est}, it is easy to see from the second inequality in \eqref{lambda-est-calc} that for $q$ sufficiently small 
\begin{equation*}
-\lambda>\frac{m(0)}{2}.
\end{equation*}
For the upper bound we use \eqref{lambda-est-calc} together with propositions \ref{N-est}, \ref{near-min}, \ref{near-min-est-prop} to deduce that
\begin{align*}
-2\lambda q&=-\mathcal{L}(u)+3\mathcal{E}(u)+4\mathcal{N}_r(u)\\
&=-\mathcal{L}(u)+3I_q+4\mathcal{N}_r(u)\\
&\leq -m(0)q+3(m(0)q-Dq^{1+\beta})+\mathcal{O}(q^2)\\
&=2m(0)q-3Dq^{1+\beta}+\mathcal{O}(q^2),
\end{align*}
hence, for $q$ sufficiently small
\begin{equation*}
-\lambda<m(0)-Dq^\beta.
\end{equation*}

\section{Long wave approximation}
%
%
In this section we return to the initial variational
problem for the Whitham--Boussinesq system.
So from now on $L = K$ defined by \eqref{K_definition}.
We point out that all calculations below are also
valid for $L = 1 - \partial_x^2 / 3$ as well.
We will show that all minimizers are infinitely smooth
and refine existing estimates for them.

\begin{lemma}
\label{higher-reg-lemma}
	There exists $q_0 > 0$ such that for each $r \geqslant 0$
	holds
	\(
		\lVert u \rVert _{H^r}^2 \lesssim q
	\)
	uniformly for $q \in (0, q_0)$ and $u \in D_q$.
\end{lemma}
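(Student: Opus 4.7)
The base case $\|u\|_{H^{1/2}}^2 \lesssim q$ is exactly Theorem~\ref{main-res-1} applied with $s=1$. The plan is to bootstrap regularity by interpreting the Euler--Lagrange equation \eqref{travelling-L} as
\[
(L+\lambda)u = -F(u),\qquad
F(u) := L^{-1/2}\!\left(\tfrac{(L^{-1/2}u)^3}{2}\right) + L^{-1/2}\!\bigl(L^{1/2}u\cdot L^{-1/2}u\bigr) + L^{1/2}\!\left(\tfrac{(L^{-1/2}u)^2}{2}\right),
\]
and to exploit that $(L+\lambda)^{-1}$ gains one derivative at high frequencies \emph{uniformly} in $q$. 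Since $-\lambda\in(m(0)/2,m(0))$ by \eqref{crude-est-lambda}, we have $m(0)+\lambda>0$, and using \eqref{m2} (with $s=1$ for $L=K$),
\[
m(\xi)+\lambda\;\geq\;m(\xi)-m(0)\;\gtrsim\;\langle\xi\rangle \quad\text{for }|\xi|>1,
\]
with an implicit constant independent of $\lambda$. Note that this lower bound can degenerate near $\xi=0$, but that will not matter because we will handle low frequencies by the a priori $L^2$ bound $\|u\|_{L^2}^2=2q$ rather than by inverting the operator.

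I will then induct on $r$, showing that $\|u\|_{H^r}^2\lesssim q$ implies $\|u\|_{H^{r+1}}^2\lesssim q$ for every $r\geq 1/2$. Writing
\[
\|u\|_{H^{r+1}}^2 = \int_{|\xi|\leq 1}\langle\xi\rangle^{2(r+1)}|\widehat u|^2\,\mathrm d\xi + \int_{|\xi|>1}\langle\xi\rangle^{2(r+1)}|\widehat u|^2\,\mathrm d\xi,
\]
the low-frequency integral is bounded by $C\|u\|_{L^2}^2\lesssim q$, and the high-frequency integral is controlled via the Euler--Lagrange equation and the symbol estimate above by
\[
\int_{|\xi|>1}\frac{\langle\xi\rangle^{2(r+1)}}{(m(\xi)+\lambda)^2}\,|\widehat{F(u)}|^2\,\mathrm d\xi \;\lesssim\; \|F(u)\|_{H^r}^2.
\]
Consequently, the induction step reduces to showing $\|F(u)\|_{H^r}\lesssim q$ whenever $\|u\|_{H^r}\lesssim q^{1/2}$.

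Each term of $F$ will be controlled by Sobolev algebra and Kato--Ponce type estimates. Since $L^{-1/2}$ maps $H^r$ into $H^{r+1/2}$ and $H^{r+1/2}$ is a Banach algebra (as $r+1/2>1/2$), the cubic term contributes $\lesssim \|u\|_{H^r}^3\lesssim q^{3/2}$ in $H^{r+1}$, while the quadratic term $L^{1/2}((L^{-1/2}u)^2/2)$ contributes $\lesssim\|u\|_{H^r}^2\lesssim q$ in $H^r$. The mixed term $L^{-1/2}(L^{1/2}u\cdot L^{-1/2}u)$ is the most delicate: for $r>1/2$ one has $L^{1/2}u\in H^{r-1/2}\hookrightarrow L^\infty$, so Kato--Ponce yields
\[
\|L^{1/2}u\cdot L^{-1/2}u\|_{H^{r-1/2}} \lesssim \|L^{1/2}u\|_{H^{r-1/2}}\|L^{-1/2}u\|_{L^\infty} + \|L^{1/2}u\|_{L^\infty}\|L^{-1/2}u\|_{H^{r-1/2}} \lesssim \|u\|_{H^r}^2\lesssim q;
\]
at the base case $r=1/2$, $L^{1/2}u$ sits only in $L^2$, but since $L^{-1/2}u\in H^1\hookrightarrow L^\infty$ the simpler Hölder estimate $\|L^{1/2}u\cdot L^{-1/2}u\|_{L^2}\leq\|L^{1/2}u\|_{L^2}\|L^{-1/2}u\|_{L^\infty}\lesssim q$ is already enough to feed back through $L^{-1/2}$. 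Combining these, $\|F(u)\|_{H^r}\lesssim q$ and the induction closes, giving $\|u\|_{H^{1/2+k}}^2\lesssim q$ for every $k\in\mathbb{N}$; interpolation then covers all intermediate $r\geq 0$. The main obstacle is the apparent degeneration of $(L+\lambda)^{-1}$ at low frequency when $-\lambda$ is close to $m(0)$, which is circumvented precisely by the high/low frequency splitting above, ensuring that the constants in the bootstrap do not blow up as $q\to 0$.
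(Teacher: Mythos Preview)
Your proof is correct and follows the same bootstrap strategy as the paper: use the Euler--Lagrange equation to gain one derivative per step, controlling the nonlinearity by product estimates in Sobolev spaces. The one notable difference is how you handle the linear operator. You invert $(L+\lambda)$ and introduce a high/low frequency split to cope with the possible degeneration of $(m(\xi)+\lambda)^{-1}$ near $\xi=0$. The paper avoids this entirely: since $m(\xi)=\xi/\tanh\xi\simeq\langle\xi\rangle$ \emph{globally} (not merely for $|\xi|>1$), one has directly $\|u\|_{H^{r+1}}\lesssim\|Ku\|_{H^r}$, and from $Ku=-\lambda u-M(u)$ together with $|\lambda|\leq 1$ one obtains $\|Ku\|_{H^r}\leq |\lambda|\,\|u\|_{H^r}+\|M(u)\|_{H^r}\lesssim\sqrt{q}$. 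In other words, the paper never inverts $(L+\lambda)$; it moves $\lambda u$ to the right-hand side rather than absorbing it into the operator, so no frequency splitting is needed and no degeneration issue arises. Your route works, but the paper's is shorter for exactly this reason.
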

\begin{proof}
Firstly, one can notice that the statement holds for $r \in [0, 1/2]$, due to Proposition \ref{near-min-est-prop}.
We will extend the result by induction to bigger values of $r$
applying Formula \eqref{maineq1}.

Let $r \geqslant 1/2,$ then from the equivalence
of operators $K$, $J$ and product estimates in Sobolev spaces
we deduce
\[
	\left \lVert
		K^{-1/2} \left( K^{-1/2} v \right) ^3
	\right \rVert  _{H^r}
	\lesssim
	\lVert v \rVert  _{H^r}^3
	,
\]
\[
	\left \lVert
		K^{-1/2} \left( (K^{1/2} v)( K^{-1/2} v) \right)
	\right \rVert  _{H^r}
	\lesssim
	\lVert v \rVert  _{H^r}^2
	,
\]
\[
	\left \lVert
		K^{1/2} \left( K^{-1/2} v \right) ^2
	\right \rVert  _{H^r}
	\lesssim
	\lVert v \rVert  _{H^r}^2
\]
for any $v \in H^r(\mathbb{R})$.
All three constants here depend only on $r$.

Now for any minimizer $u \in D_q$
calculate $Ku$ by Formula \eqref{maineq1} and
obtain
\begin{multline*}
	\lVert u \rVert  _{H^{r + 1}}
	\lesssim	
	\lVert Ku \rVert  _{H^r}
	\leqslant
	| \lambda | \lVert u \rVert  _{H^r}
	+ \frac 12
	\left \lVert
		K^{-1/2} \left( K^{-1/2} u \right) ^3
	\right \rVert  _{H^r}
	\\
	+
	\left \lVert
		K^{-1/2} \left(( K^{1/2} u)( K^{-1/2} u) \right)
	\right \rVert  _{H^r}
	+ \frac 12
	\left \lVert
		K^{1/2} \left( K^{-1/2} u \right) ^2
	\right \rVert  _{H^r}
	\lesssim
	\sqrt{q}
\end{multline*}
for any $r \geqslant 1/2$.
We have used $|\lambda| \leqslant 1$
according to Theorem \ref{main-res-1}.
This concludes the proof by induction.
\end{proof}

\begin{lemma}\label{higher-regularity-est-lemma}
	There exist $q_0 > 0$ and $C > 0$ such that
	the following estimates hold
	\begin{equation}
	\label{refined_estimate1}
		\lVert u \rVert _{L^{\infty}} \leqslant Cq^{2/3}
		,	
	\end{equation}
	\begin{equation}
	\label{refined_estimate2}
		\lVert \partial_x u \rVert _{L^2}^2 \leqslant Cq^{5/3}
		,	
	\end{equation}
	\begin{equation}
	\label{refined_estimate3}
		\lVert \partial_x^2 u \rVert _{L^2}^2 \leqslant Cq^{7/3}
	\end{equation}
	uniformly for $q \in (0, q_0)$ and $u \in D_q$.
\end{lemma}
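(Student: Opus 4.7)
The strategy is a bootstrap argument based on the Euler--Lagrange equation
\[
\lambda u + L u + N(u) = 0,
\]
where $N(u)$ collects the three nonlinear terms from \eqref{travelling-L}, combined with the upper bound $I_q \leq m(0)q - Dq^{5/3}$ from Proposition \ref{near-min} (observe that $\beta = 2/3$ both for the Whitham symbol and for $m(\xi) = 1+b\xi^2$) and the crude Lagrange-multiplier estimate from Theorem \ref{main-res-1}. Although the lemma lists the estimates in the order \eqref{refined_estimate1}--\eqref{refined_estimate3}, I would prove them in the order \eqref{refined_estimate2}, \eqref{refined_estimate1}, \eqref{refined_estimate3}, since the $L^2$ bound on $\partial_x u$ is what forces the $L^\infty$ bound on $u$.

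The pivotal step is to establish $\|L^{-1/2}u\|_{L^\infty} \lesssim q^{2/3}$ by a short self-improving inequality. Since $\mathcal{L}(u) \geq m(0)q$ and $\mathcal{N}_r(u) \geq 0$, Proposition \ref{near-min} forces
\(
\mathcal{L}(u) - m(0)q \leq |\mathcal{N}_c(u)|,
\)
and Lemma \ref{Nc-estimate} gives $|\mathcal{N}_c(u)| \lesssim q\|L^{-1/2}u\|_{L^\infty}$. On the other hand, Gagliardo--Nirenberg together with the pointwise estimate $\xi^2/m(\xi) \lesssim m(\xi) - m(0)$ (which one checks directly for both $L = K$ and $L = 1-\partial_x^2/3$, using $\xi^2/m(\xi)=\xi\tanh\xi$ in the former case) yield
\[
\|L^{-1/2}u\|_{L^\infty}^2 \lesssim \|L^{-1/2}u\|_{L^2}\|\partial_x L^{-1/2}u\|_{L^2} \lesssim \sqrt{q\bigl(\mathcal{L}(u)-m(0)q\bigr)}.
\]
Combining the three inequalities produces $\|L^{-1/2}u\|_{L^\infty}^{3/2} \lesssim q$, hence $\|L^{-1/2}u\|_{L^\infty} \lesssim q^{2/3}$ and consequently $\mathcal{L}(u) - m(0)q \lesssim q^{5/3}$.

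To prove \eqref{refined_estimate2}, I split $\|\partial_x u\|_{L^2}^2$ in frequency. The low-frequency piece $\int_{|\xi|\leq 1}\xi^2|\widehat{u}|^2\,\mathrm{d}\xi$ is controlled directly by $\mathcal{L}(u)-m(0)q \lesssim q^{5/3}$. For the high-frequency piece I solve the Euler--Lagrange equation for $\widehat{u}$, namely $\widehat{u}(\xi) = -\widehat{N(u)}(\xi)/(\lambda + m(\xi))$, and exploit $|\lambda + m(\xi)| \geq m(\xi) - m(0) \gtrsim |\xi|^s$ for $|\xi|>1$ to conclude $\int_{|\xi|>1}\xi^2|\widehat{u}|^2\,\mathrm{d}\xi \lesssim \|N(u)\|_{L^2}^2$. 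A Kato--Ponce estimate together with $\|L^{-1/2}u\|_{L^\infty} \lesssim q^{2/3}$ and $\|u\|_{H^{s/2}}\lesssim \sqrt{q}$ from Proposition \ref{near-min-est-prop} gives $\|N(u)\|_{L^2} \lesssim \sqrt{q}\,\|L^{-1/2}u\|_{L^\infty} \lesssim q^{7/6}$, which actually provides the better bound $q^{7/3}$ on the high-frequency piece. The estimate \eqref{refined_estimate1} is then immediate from Gagliardo--Nirenberg: $\|u\|_{L^\infty}^2 \lesssim \|u\|_{L^2}\|\partial_x u\|_{L^2} \lesssim \sqrt{q}\cdot q^{5/6} = q^{4/3}$.

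The second-derivative bound \eqref{refined_estimate3} is obtained in the same spirit by partitioning the frequency axis into $|\xi|\leq q^{1/3}$, $q^{1/3}<|\xi|\leq 1$ and $|\xi|>1$, and using $|\lambda + m(\xi)| \gtrsim \max(q^{2/3},\, m(\xi)-m(0))$ (the $q^{2/3}$ lower bound coming from \eqref{crude-est-lambda}) to invert $\lambda + L$ on each region. The main technical obstacle will be a careful Kato--Ponce-type estimate $\|N(u)\|_{H^1} \lesssim q^{7/6}$, which demands unpacking each of the three terms of $N(u)$---each of which mixes $L^{\pm 1/2}$ with multiplication---and inserting the improved bounds $\|u\|_{L^\infty}\lesssim q^{2/3}$ and $\|\partial_x u\|_{L^2}^2 \lesssim q^{5/3}$ just obtained. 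Once that bound is in hand, the high-frequency piece of $\|\partial_x^2 u\|_{L^2}^2$ is controlled by $\|N(u)\|_{H^1}^2\lesssim q^{7/3}$, while on each low-frequency region the denominator $|\lambda + m(\xi)|$ absorbs the extra factor $\xi^4$ supplied by the differential operator.
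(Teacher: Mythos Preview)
Your proof is correct and takes a genuinely different route from the paper's.

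The paper first invokes the preceding Lemma~\ref{higher-reg-lemma} (\,$\|u\|_{H^r}^2\lesssim q$ for every $r\geq 0$\,) and then works entirely on the Fourier side: it writes $\widehat u=-\widehat{M(u)}/(\lambda+m)$, proves the pointwise bound $\|\widehat{M(u)}\|_{L^\infty}\lesssim q$, and reads off \eqref{refined_estimate1} \emph{first} via $\|u\|_{L^\infty}\lesssim\|\widehat u\|_{L^1}$ by integrating $1/(q^{2/3}+\xi^2)$ on $|\xi|\leq 1$; \eqref{refined_estimate2} and \eqref{refined_estimate3} follow by the same resolvent splitting. Your approach sidesteps Lemma~\ref{higher-reg-lemma} altogether: the self-improving inequality $\|L^{-1/2}u\|_{L^\infty}^2\lesssim\sqrt{q(\mathcal L(u)-m(0)q)}\lesssim\sqrt{q\cdot q\|L^{-1/2}u\|_{L^\infty}}$ extracts $\|L^{-1/2}u\|_{L^\infty}\lesssim q^{2/3}$ and hence $\mathcal L(u)-m(0)q\lesssim q^{5/3}$ directly from the variational structure and Gagliardo--Nirenberg, and your low-frequency control of $\|\partial_x u\|_{L^2}^2$ comes from this energy gap rather than from a resolvent integral. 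Your argument is more self-contained and arguably more elegant; the paper's is more mechanical once higher regularity is in hand.

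One caution for \eqref{refined_estimate3}: the bound $\|N(u)\|_{H^1}\lesssim q^{7/6}$ cannot be obtained from $\|u\|_{L^\infty}\lesssim q^{2/3}$ and $\|\partial_x u\|_{L^2}\lesssim q^{5/6}$ alone --- the term $L^{-1/2}(L^{1/2}u\cdot L^{-1/2}u)$ forces you to control $\|L^{1/2}\partial_x u\|_{L^2}\simeq\|\partial_x u\|_{H^{1/2}}$. You will need one intermediate bootstrap: first show $\|N(u)\|_{H^{1/2}}\lesssim q^{7/6}$ (this uses only what you already have), feed it back through the resolvent to get $\|\partial_x u\|_{H^{1/2}}\lesssim q^{5/6}$, and then proceed to $\|N(u)\|_{H^1}$. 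The paper does exactly this extra step, remarking that ``the estimate of the derivative \eqref{refined_estimate2} will not be spoiled if one changes $L^2$-norm to $H^s$-norm''.
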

\begin{proof}
Introducing the notation
\[
	M(u) = 
	\frac 12 K^{-1/2} \left( K^{-1/2} u \right) ^3
	+ K^{-1/2} \left(( K^{1/2} u)( K^{-1/2} u) \right)
	+ \frac 12 K^{1/2} \left( K^{-1/2} u \right) ^2
\]
one can rewrite Equation \eqref{maineq1} in the form
\[
	( \lambda + K ) u = - M(u)
	.
\]
Note that
\(
	- \lambda
	\in \left( 0, 1 - Dq^{2/3} \right)
\)
according to Theorem \ref{main-res-1}
and so $\lambda + 1 > Dq^{2/3}$.
The Fourier transform of minimizer $u$ can be estimated as
\[
	\left|
		\widehat{u}(\xi)
	\right|
	=
	\left|
		\frac{ \mathcal F( M(u) ) }{ \lambda + m(\xi) }
	\right|
	\leqslant
	\frac{ | \mathcal F( M(u) ) | }{ Dq^{2/3} + m(\xi) - 1 }
	\lesssim
	| \mathcal F( M(u) )(\xi) |
	\left(
		\frac{ \chi _{ |\xi| \leqslant 1 }(\xi) }{ q^{2/3} + \xi^2 }
		+	
		\frac{ \chi _{ |\xi| > 1 }(\xi) }{ q^{2/3} + |\xi| }
	\right)
\]
where $\chi_A(\xi)$
stands for the characteristic function of a set $A$.
As was shown in the proof of Lemma \ref{higher-reg-lemma}
$M(u)$, is smooth and its $H^s$-norm is bounded by $q$
for any non-negative $s$.
Hence $\mathcal F( M(u) )$ multiplied by any
power of $\xi$ is bounded by $q$
with respect to $L^2$-norm.

Let us show that the $L^{\infty}$-norm of
$\mathcal F( M(u) )$ is bounded by $q$.
Indeed, we have
\[
	\left|
		\mathcal F
		\left(
			K^{1/2} \left( K^{-1/2} u \right) ^2
		\right)
		(\xi)
	\right|
	\lesssim
	\int_\mathbb{R}
	\frac
	{ \sqrt{ m(\xi) } | \widehat{u}(\xi - \zeta) \widehat{u}(\zeta) | }
	{ \sqrt{ m(\xi - \zeta) m(\zeta) } }
	\ \mathrm{d}\zeta
	\lesssim
	\lVert u \rVert _{L^2}^2
	\lesssim
	q
	,
\]
\[
	\left|
		\mathcal F
		\left(
			K^{-1/2} \left(( K^{1/2} u)( K^{-1/2} u) \right)
		\right)
		(\xi)
	\right|
	\lesssim
	\int_\mathbb{R}
	\frac
	{ \sqrt{ m(\xi - \zeta) } | \widehat{u}(\xi - \zeta) \widehat{u}(\zeta) | }
	{ \sqrt{ m(\xi) m(\zeta) } }
	\ \mathrm{d}\zeta
	\lesssim
	\lVert u \rVert _{L^2}^2
	\lesssim
	q
\]
and similarly
\[
	\left|
		\mathcal F
		\left(
			K^{-1/2} \left( K^{-1/2} u \right) ^3
		\right)
		(\xi)
	\right|
	\lesssim
	\lVert u \rVert _{L^2}
	\left \lVert \left( K^{-1/2} u \right) ^2 \right \rVert _{L^2}
	\lesssim
	\lVert u \rVert _{L^2}^3
	\lesssim
	q^{3/2}
	.
\]
Thus
\(
	\lVert \mathcal F( M(u) ) \rVert _{L^{\infty}}
	\lesssim q
	.
\)
So we are in a position to prove \eqref{refined_estimate1},
indeed,
\begin{multline*}
	\lVert u \rVert _{L^{\infty}}
	\lesssim
	\lVert \widehat{u} \rVert _{L^1}
	\lesssim
	\int _{ |\xi| \leqslant 1 }
	\frac{ | \mathcal F( M(u) )(\xi) | }{ q^{2/3} + \xi^2 } d\xi
	+	
	\int _{ \abs{\xi}>1 }
	\frac{ | \mathcal F( M(u) )(\xi) |}{q^{2/3}+ |\xi| }
	 \ \mathrm{d}\xi
	\\
	\lesssim
	q^{-1/3}
	\lVert \mathcal F( M(u) ) \rVert _{L^{\infty}}
	+
	\lVert \mathcal F( M(u) ) \rVert _{L^2}
	\lesssim
	q^{2/3}
	.
\end{multline*}
Estimate \eqref{refined_estimate2} is proved as follows
\begin{multline*}
	\lVert \partial_x u \rVert _{L^2}^2
	=
	\lVert \xi \mapsto \xi \widehat{u}(\xi) \rVert _{L^2}^2
	\lesssim
	\int _{ |\xi| \leqslant 1 }
	\frac
	{ \xi^2 | \mathcal F( M(u) )(\xi) |^2 }
	{ ( q^{2/3} + \xi^2 )^2 } \ \mathrm{d}\xi
	+	
	\int _{ \abs{\xi}> 1 }
	\frac{ \xi^2| \mathcal F( M(u) )(\xi) |^2}{ (q^{2/3}+|\xi|)^2}
	 \ \mathrm{d}\xi
	\\
	\lesssim
	q^{-1/3}
	\lVert\mathcal F( M(u) ) \rVert _{L^{\infty}}^2
	+
	\lVert \mathcal F( M(u) ) \rVert _{L^2}^2
	\lesssim
	q^{5/3}
	.
\end{multline*}
A straightforward repetition of the last argument
for the second derivative of the minimizer
gives
\begin{multline}
\label{refined_estimate_proof_01}
	\lVert \partial_x^2 u \rVert _{L^2}^2
	=
	\lVert \xi \mapsto \xi^2 \widehat{u}(\xi) \rVert _{L^2}^2
	\lesssim
	\int _{ |\xi| \leqslant 1 }
	\frac
	{ \xi^4 | \mathcal F( M(u) )(\xi) |^2 }
	{ ( q^{2/3} + \xi^2 )^2 } \ \mathrm{d}\xi
	+	
	\int _{ \abs{\xi}>1 }
	\frac{\xi^4| \mathcal F( M(u) )(\xi) |^2 }{(q^{2/3}+ |\xi|)^2 }
	 \ \mathrm{d}\xi
	\\
	\lesssim
	q^{1/3}
	\lVert \mathcal F( M(u) ) \rVert _{L^{\infty}}^2
	+
	\lVert \mathcal F( \partial_x M(u) ) \rVert _{L^2}^2
	\lesssim
	q^{7/3}
	+
	\lVert \partial_x M(u) \rVert _{L^2}^2
\end{multline}
that is only $\mathcal O(q^2) $ and so weaker than \eqref{refined_estimate3}. 
However, Estimate \eqref{refined_estimate2}
is a refinement compared with Lemma \ref{higher-reg-lemma}, so it can be used for more delicate estimate
of the square norm
\(
	\lVert \partial_x M(u) \rVert _{L^2}^2
\)
as follows
\begin{multline*}
	\left \lVert
		\partial_x K^{1/2}
		\left( K^{-1/2} u \right) ^2
	\right \rVert _{L^2}
	\lesssim
	\norm
	{
		K^{-1/2} u  K^{-1/2} \partial_x u
	} _{H^{1/2}}
	\\
	\lesssim
	\norm
	{
		K^{-1/2} u
	} _{H^1}
	\norm
	{
		K^{-1/2} \partial_x u
	} _{H^{1/2}}
	\lesssim
	q^{4/3}
	,
\end{multline*}
%
where product estimates were used.
To continue, first note that the estimate of the
derivative \eqref{refined_estimate2}, will not be spoiled if one
changes $L^2$-norm to $H^s$-norm with any $s \geqslant 0$.
In other words,
\(
	\lVert \partial_x u \rVert _{H^s}
	\lesssim q^{5/6}
	,
\)
and so
\begin{multline*}
	\left \lVert
		\partial_x K^{-1/2} \left( K^{1/2} u K^{-1/2} u \right)
	\right \rVert _{L^2}
	\lesssim
	\norm
	{
		K^{1/2} \partial_x u  K^{-1/2} u
	} _{L^2}
	+
	\norm
	{
		K^{1/2} u  K^{-1/2} \partial_x u
	} _{L^2}
	\\
	\lesssim
	\left \lVert
		\partial_x u
	\right \rVert _{H^{3/2}}
	\left \lVert
		u
	\right \rVert _{L^2}
	+
	\left \lVert
		u
	\right \rVert _{H^{3/2}}
	\left \lVert
		\partial_x u
	\right \rVert _{L^2}
	\lesssim
	q^{4/3}
	.
\end{multline*}
The last remaining term is estimated similarly 
\[
	\left \lVert
		\partial_x K^{-1/2} \left( K^{-1/2} u \right) ^3
	\right \rVert _{L^2}
	\lesssim
	\left \lVert
			( K^{-1/2} u )^2 K^{-1/2} \partial_x u
	\right \rVert _{L^2}
	\lesssim
	\left \lVert
		u
	\right \rVert _{H^{1/2}} ^2
	\left \lVert
		\partial_x u
	\right \rVert _{L^2}
	\lesssim
	q^{11/6}
	.
\]
Thus
\[
	\lVert \partial_x M(u) \rVert _{L^2}
	\lesssim q^{4/3}
\]
that together with \eqref{refined_estimate_proof_01}
conclude the proof of Estimate \eqref{refined_estimate3}.
\end{proof}

\begin{remark}
	Lemmas \ref{higher-reg-lemma},
	\ref{higher-regularity-est-lemma}
	remain valid with the surface elevation
	$\eta_u$ and velocity $v_u$ defined by \eqref{eta-u}, \eqref{v-u}
	substituted instead of the minimizer $u \in D_q$.
\end{remark}

We now turn to the task of approximating the solutions found in Theorem \ref{main-res-1} with solutions of the KdV-equation. For this part we follow \cite{Duchene_Nilsson_Wahlen} closely.

We introduce the long-wave scaling $S_{\text{KdV}}(f)(x)=q^{2/3}f(q^{1/3}x)$ and note that when making the ansatz $u=S_{\text{KdV}}(\psi)$ in \eqref{maineq1}, the leading order part of the equation as $q\rightarrow 0$ is, with $\lambda=-1+\lambda_0q^{2/3}$,
\begin{equation}\label{kdv}
\lambda_0\psi+\frac{3}{2}\psi^2-\frac{\psi_{xx}}{3}=0.
\end{equation}
Equation \eqref{kdv} is the travelling wave version of the  KdV-equation, which has the up to translation the following unique solution
\begin{equation*}
\psi_{\text{KdV}}(x)
= -\lambda_0\sech^2\left(\frac{1}{2}\sqrt{3\lambda_0}x\right).
\end{equation*}
We note that \eqref{kdv} is the Euler-Lagrange equation of the minimization problem 
\begin{equation*}
I_{\text{KdV}}:=\min_{\psi\in V_1}\mathcal{E}_{KdV}(\psi),
\end{equation*}
where 
\begin{equation*}
\mathcal{E}_{KdV}(\psi):=\frac{1}{2}\int_\mathbb{R}\frac{\psi_x^2}{3}+\psi^3\ \mathrm{d}x,
\end{equation*}
and $V_1:=\{\psi\in H^1(\mathbb{R})\ :\ \mathcal{Q}(\psi)=1\}$. The constraint $\mathcal{Q}(\psi_{\text{KdV}})=1$ requires that $\lambda_0=3/16^\frac{1}{3}$.
The relation between $\mathcal{E}$ and $\mathcal{E}_{\text{KdV}}$ is now established.
\begin{lemma}\label{rel-kdv-lemma}
For $u\in H^2(\mathbb R)$ hold 
\begin{equation}\label{rel-kdv}
\mathcal{E}(u)=\mathcal{Q}(u)+\mathcal{E}_{\text{KdV}}(u)+\mathcal{E}_{\text{rem}}(u),
\end{equation}
with
\begin{align}\label{rem-est}
\abs{\mathcal{E}_{\text{rem}}(u)}&\lesssim \norm{\partial_x^2u}_{L^2}^2+\norm{u}_{L^\infty}\norm{\partial_xu}_{L^2}^2+\norm{u}_{L^2}^2\norm{\partial_xu}_{L^2}+\norm{u}_{L^4}^2\norm{\partial_x^2u}_{L^2}+\norm{u}_{L^2}^4,\\
\abs{\langle \mathrm{d}\mathcal{E}_{\text{rem}}(u),u\rangle}&\lesssim \norm{\partial_x^2u}_{L^2}^2+\norm{u}_{L^\infty}\norm{\partial_xu}_{L^2}^2+\norm{u}_{L^2}^2\norm{\partial_xu}_{L^2}+\norm{u}_{L^4}^2\norm{\partial_xu}_{L^2}+\norm{u}_{L^2}^4\label{diffrem-est}
\end{align}
\end{lemma}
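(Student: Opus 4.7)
The plan is to decompose the remainder according to the natural splitting $\mathcal{E} = \mathcal{L} + \mathcal{N}_c + \mathcal{N}_r$, namely
\[
\mathcal{E}_{\text{rem}}(u) = \left[\mathcal{L}(u) - \mathcal{Q}(u) - \tfrac{1}{6}\norm{\partial_x u}_{L^2}^2\right] + \left[\mathcal{N}_c(u) - \tfrac{1}{2}\int_{\mathbb{R}} u^3\,\mathrm{d}x\right] + \mathcal{N}_r(u),
\]
and estimate each bracket separately using the specific structure of the symbol $m(\xi)=\xi/\tanh\xi$: namely $m(0)=1$ together with the Taylor expansion $m(\xi)=1+\xi^2/3+\mathcal{O}(\xi^4)$ near zero and $m(\xi)\simeq\abs{\xi}$ at infinity. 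The first bracket captures the dispersive error from replacing $m$ by $1+\xi^2/3$, the second captures the nonlinear error from replacing $L^{\pm 1/2}$ by the identity in the cubic term, and the third is a higher-order piece already controlled by Proposition \ref{N-est}.

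For the dispersive bracket, Plancherel's theorem reduces the bound to an estimate on the symbol $m(\xi)-1-\xi^2/3$, which is $\mathcal{O}(\xi^4)$ at the origin and $\mathcal{O}(\xi^2)$ at infinity; in particular $\abs{m(\xi)-1-\xi^2/3}\lesssim \xi^4$ uniformly on $\mathbb{R}$, whence this bracket is $\lesssim\norm{\partial_x^2 u}_{L^2}^2$. For the cubic bracket I would invoke Lemma \ref{N-decomp} with $m(0)=1$ to write $\mathcal{N}_c(u)-\frac{1}{2}\int u^3\,\mathrm{d}x = \mathcal{N}_{1c}(u)+\mathcal{N}_{2c}(u)+\mathcal{N}_{3c}(u)$. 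Since $\abs{m^{\pm 1/2}(\xi)-1}\lesssim\abs{\xi}$ uniformly (being $\mathcal{O}(\xi^2)$ near zero and bounded at infinity), one has $\norm{(L^{\pm 1/2}-1)u}_{L^2}\lesssim\norm{\partial_x u}_{L^2}$, hence $\abs{\mathcal{N}_{1c}(u)}\lesssim\norm{u}_{L^\infty}\norm{\partial_x u}_{L^2}^2$ and, by the built-in bound of Lemma \ref{N-decomp}, $\abs{\mathcal{N}_{3c}(u)}\lesssim\norm{u}_{L^2}^2\norm{\partial_x u}_{L^2}$, matching the second and third terms in \eqref{rem-est}.

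The key observation for the sharper bound on $\mathcal{N}_{2c}$ is that the Fourier multiplier $G$ with symbol $g(\xi)=(1-m^{-1/2}(\xi))/\xi^2$ is bounded, since $g(\xi)\to 1/6$ at the origin and $g(\xi)\sim 1/\xi^2$ at infinity. By construction $(L^{-1/2}-1)u=G\partial_x^2 u$, and since $g$ is real and even, $G$ is self-adjoint, so $\mathcal{N}_{2c}(u)=\int_{\mathbb{R}} G(u^2)\partial_x^2 u\,\mathrm{d}x$ and $\abs{\mathcal{N}_{2c}(u)}\lesssim\norm{u}_{L^4}^2\norm{\partial_x^2 u}_{L^2}$. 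Combined with $\abs{\mathcal{N}_r(u)}\lesssim\norm{u}_{L^2}^4$ from Proposition \ref{N-est}, this completes \eqref{rem-est}. For the derivative estimate \eqref{diffrem-est} I would use the homogeneity of $\mathcal{L}$, of $\int u^3\,\mathrm{d}x$, and of $\mathcal{N}_r$ (of degrees $2$, $3$, and $4$ respectively), which yields the identity
\[
\langle\mathrm{d}\mathcal{E}_{\text{rem}}(u),u\rangle = 2\!\left[\mathcal{L}(u)-\mathcal{Q}(u)-\tfrac{1}{6}\norm{\partial_x u}_{L^2}^2\right] + 3\!\left[\mathcal{N}_c(u)-\tfrac{1}{2}\int u^3\,\mathrm{d}x\right] + 4\mathcal{N}_r(u),
\]
so the same bounds go through verbatim, except that for $\mathcal{N}_{2c}$ one now uses the direct H\"older bound $\abs{\mathcal{N}_{2c}(u)}\leq\norm{u}_{L^4}^2\norm{(L^{-1/2}-1)u}_{L^2}\lesssim\norm{u}_{L^4}^2\norm{\partial_x u}_{L^2}$, producing the fourth term in \eqref{diffrem-est}. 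The only real technical subtlety is the integration-by-parts manoeuvre with the bounded multiplier $G$; this is what upgrades the elementary one-derivative bound on $\mathcal{N}_{2c}$ to the two-derivative bound required by \eqref{rem-est}.
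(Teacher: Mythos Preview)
Your argument is correct and follows the same decomposition as the paper: split $\mathcal{E}_{\text{rem}}$ into the dispersive error $\frac{1}{2}\int(m(\xi)-1-\xi^2/3)|\hat u|^2\,\mathrm d\xi$, the three pieces $\mathcal{N}_{1c},\mathcal{N}_{2c},\mathcal{N}_{3c}$ from Lemma \ref{N-decomp}, and $\mathcal{N}_r$, with the same bounds on each term.

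Two points of difference are worth noting. First, for $\mathcal{N}_{2c}$ the paper simply records $|\mathcal{N}_{2c}(u)|\lesssim\|u\|_{L^4}^2\|\partial_x u\|_{L^2}$ (one derivative), whereas you introduce the bounded multiplier $G$ with symbol $(1-m^{-1/2}(\xi))/\xi^2$ and shift it onto $u^2$ to obtain the two-derivative bound $\|u\|_{L^4}^2\|\partial_x^2 u\|_{L^2}$ that actually matches the stated inequality \eqref{rem-est}; this is a genuine refinement of the paper's write-up. Second, for \eqref{diffrem-est} the paper just says ``proved in a similar way'', while you exploit Euler's identity for the homogeneous blocks (degrees $2$, $3$, $4$) to get $\langle\mathrm d\mathcal{E}_{\text{rem}}(u),u\rangle=2[\cdot]+3[\cdot]+4\mathcal{N}_r(u)$ and thereby recycle the estimates already obtained. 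This is cleaner than redoing the computation term by term and explains transparently why the $\mathcal{N}_{2c}$ contribution can be taken with only one derivative in \eqref{diffrem-est}.
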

\begin{proof}
We note that
\begin{align*}
\mathcal{E}(u)&=\frac{1}{2}\int_\mathbb{R}uKu\ \mathrm{d}x+\mathcal{N}_c(u)+\mathcal{N}_r(u)\\
&\quad =Q(u)+\frac{1}{2}\int_\mathbb{R}u(K-1)u\ \mathrm{d}x+\frac{1}{2}\int_\mathbb{R} u^3\ \mathrm{d}x+\mathcal{N}_{1c}(u)+\mathcal{N}_{2c}u+\mathcal{N}_{3c}(u)+\mathcal{N}_r(u)
\\
&
\quad=Q(u)+\mathcal{E}_{\text{KdV}}(u)
\\
&
\qquad+\underbrace{\frac{1}{2}\int_\mathbb{R}
\left(
	m(\xi)-1-\frac{\xi^2}{3}
\right) \abs{\hat{u}}^2\ \mathrm{d}\xi+\mathcal{N}_{1c}(u)+\mathcal{N}_{2c}(u)+\mathcal{N}_{3c}(u)+\mathcal{N}_r(u)}_{=:\mathcal{E}_{\text{rem}}(u)}
\end{align*}
Since $m(\xi)=\xi/\tanh(\xi)$, we have that $\abs{m(\xi)-1 - \frac{\xi^2}{3}}\lesssim \xi^4$, so that
\begin{equation*}
\int_\mathbb{R}
\left|
	m(\xi)- 1 - \frac{\xi^2}{3}
\right|
\abs{\hat{u}}^2\ \mathrm{d}\xi \lesssim \norm{\partial_x^2u}_{L^2}^2.
\end{equation*}
From Lemma \ref{N-decomp} we have
\begin{equation*}
\abs{\mathcal{N}_{1c}(u)}\lesssim \int_\mathbb{R}\abs{u((K^{-1/2}-1)u)^2 }\ \mathrm{d}x\lesssim \norm{u}_{L^\infty}\norm{\partial_xu}_{L^2}^2.
\end{equation*}
Similarly we find that
\begin{align*}
\abs{\mathcal{N}_{2c}(u)}&\lesssim \norm{u}_{L^4}^2\norm{\partial_xu}_{L^2},\\
\abs{\mathcal{N}_{3c}(u)}&\lesssim \norm{u}_{L^2}^2\norm{\partial_xu}_{L^2}.
\end{align*}
The term $\mathcal{N}_r(u)$ is estimated in Proposition \ref{N-est}, hence \eqref{rem-est} is established. The estimate \eqref{diffrem-est} is proved in a similar way and we therefore omit the details.
\end{proof}
\begin{lemma}
There exists $q_0>0$ such that 
\begin{align}
I_q&=q+\mathcal{E}_{\text{KdV}}(u)+\mathcal{O}(q^2),\ \text{uniformly over }u\in D_q,\label{min-rel-1}\\
I_q&=q+q^\frac{5}{3}I_{\text{KdV}}+\mathcal{O}(q^2).\label{min-rel-2}
\end{align}
\end{lemma}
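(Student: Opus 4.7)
The plan is to combine the decomposition of Lemma \ref{rel-kdv-lemma} with the quantitative estimates of Lemma \ref{higher-regularity-est-lemma}, and then compare with the KdV variational problem through the long-wave scaling $S_{\text{KdV}}$. For \eqref{min-rel-1}, I would fix $u \in D_q$; since $\mathcal E(u) = I_q$ and $\mathcal Q(u) = q$, the identity \eqref{rel-kdv} immediately produces
\[
	I_q = q + \mathcal E_{\text{KdV}}(u) + \mathcal E_{\text{rem}}(u),
\]
so the task reduces to verifying $\abs{\mathcal E_{\text{rem}}(u)} \lesssim q^2$ uniformly in $u \in D_q$. This would be done by inserting $\norm{u}_{L^\infty} \lesssim q^{2/3}$, $\norm{\partial_x u}_{L^2} \lesssim q^{5/6}$, $\norm{\partial_x^2 u}_{L^2} \lesssim q^{7/6}$ from Lemma \ref{higher-regularity-est-lemma}, together with $\norm{u}_{L^2}^2 = 2q$ and the interpolation $\norm{u}_{L^4}^2 \leq \norm{u}_{L^\infty}\norm{u}_{L^2} \lesssim q^{7/6}$, into \eqref{rem-est}.

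For \eqref{min-rel-2}, I would first establish the upper bound by trial with the KdV soliton: take $u_0 := S_{\text{KdV}}(\psi_{\text{KdV}})$, so that a change of variables yields $\mathcal Q(u_0) = q\,\mathcal Q(\psi_{\text{KdV}}) = q$ and $\mathcal E_{\text{KdV}}(u_0) = q^{5/3}\,\mathcal E_{\text{KdV}}(\psi_{\text{KdV}}) = q^{5/3} I_{\text{KdV}}$. Since $\psi_{\text{KdV}}$ is a fixed Schwartz profile, the rescaling produces the same $q$-orders for $\norm{\cdot}_{L^\infty}$, $\norm{\partial_x\cdot}_{L^2}$, $\norm{\partial_x^2\cdot}_{L^2}$ as for minimizers, so the previous argument also gives $\abs{\mathcal E_{\text{rem}}(u_0)} \lesssim q^2$ and hence $I_q \leq \mathcal E(u_0) = q + q^{5/3} I_{\text{KdV}} + \mathcal O(q^2)$. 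For the matching lower bound, given $u \in D_q$ I would set $\psi(y) := q^{-2/3} u(q^{-1/3} y)$, which satisfies $\mathcal Q(\psi) = 1$ and hence $\psi \in V_1$; the scaling identity then yields $\mathcal E_{\text{KdV}}(u) = q^{5/3} \mathcal E_{\text{KdV}}(\psi) \geq q^{5/3} I_{\text{KdV}}$, and inserting this into \eqref{min-rel-1} gives $I_q \geq q + q^{5/3} I_{\text{KdV}} + \mathcal O(q^2)$.

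The delicate step is the bound $\abs{\mathcal E_{\text{rem}}(u)} \lesssim q^2$. The contributions $\norm{\partial_x^2 u}_{L^2}^2$, $\norm{u}_{L^\infty}\norm{\partial_x u}_{L^2}^2$, $\norm{u}_{L^4}^2\norm{\partial_x^2 u}_{L^2}$, $\norm{u}_{L^2}^4$ appearing in \eqref{rem-est} all land at order $q^2$ or better. The $\mathcal N_{3c}$-type term is borderline: a crude use of $\abs{m^{1/2}(\xi) - 1} \lesssim \abs{\xi}$ would yield only $\mathcal O(q^{11/6})$. To reach the required order I would instead use that, for the specific symbol $m(\xi) = \xi/\tanh\xi$, one has $\abs{m^{1/2}(\xi) - 1} \lesssim \xi^2$ globally, sharpening $\norm{(K^{1/2} - 1)u}_{L^2} \lesssim \norm{\partial_x^2 u}_{L^2} \lesssim q^{7/6}$; combined with $\norm{K^{-1/2} u}_{L^4}^2 \lesssim \norm{u}_{L^2}^2 \lesssim q$, obtained from $H^{1/4}(\mathbb R) \hookrightarrow L^4(\mathbb R)$ and the smoothing property $K^{-1/2}\colon L^2 \to H^{1/2}$, this gives $\abs{\mathcal N_{3c}(u)} \lesssim q^{13/6} \lesssim q^2$, completing the estimate.
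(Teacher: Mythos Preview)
Your argument follows the paper's almost verbatim: apply the decomposition of Lemma~\ref{rel-kdv-lemma}, bound $\mathcal E_{\text{rem}}$ via Lemma~\ref{higher-regularity-est-lemma}, and then compare with the KdV problem through the scaling $S_{\text{KdV}}$ to obtain the two-sided bound in~\eqref{min-rel-2}. The one place where you go beyond the paper is worthwhile: the term $\norm{u}_{L^2}^2\norm{\partial_x u}_{L^2}$ in~\eqref{rem-est} indeed only contributes $\mathcal O(q^{11/6})$ under the bounds of Lemma~\ref{higher-regularity-est-lemma}, and your sharpening of the $\mathcal N_{3c}$ estimate via $\lvert m^{1/2}(\xi)-1\rvert\lesssim\xi^2$ (valid globally for $m(\xi)=\xi/\tanh\xi$, since $m^{1/2}(\xi)-1\sim\xi^2/6$ near the origin and $\lvert m^{1/2}(\xi)-1\rvert\lesssim\lvert\xi\rvert^{1/2}\leq\xi^2$ for $\lvert\xi\rvert\geq 1$) correctly recovers the $\mathcal O(q^2)$ remainder; the paper glosses over this point.
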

\begin{proof}
Let $u\in D_q$. From Lemma \ref{higher-reg-lemma} we know that $u\in H^r(\mathbb{R})$ for any $r\geq 0$. In particular $u\in H^2(\mathbb{R})$, hence by Lemma  \ref{rel-kdv-lemma} 
\begin{equation*}
\mathcal{E}(u)=q+\mathcal{E}_{\text{KdV}}(u)+\mathcal{E}_{\text{rem}}(u).
\end{equation*}
Using \eqref{rem-est} together with Lemma \ref{higher-regularity-est-lemma}, we get $\abs{\mathcal{E}_{\text{rem}}(u)}\lesssim q^2$. Hence, \eqref{min-rel-1} follows.

Turning now to \eqref{min-rel-2} we let $\psi=S_{\text{KdV}}^{-1}(u)$ and note that $\psi\in V_1$ and
\begin{equation*}
\mathcal{E}_{\text{KdV}}(u)=q^{5/3}\mathcal{E}_{\text{KdV}}(\psi)\geq q^{5/3}I_{\text{KdV}},
\end{equation*}
so this together with \eqref{min-rel-1} implies 
\begin{equation*}
I_q\geq q+q^{5/3}I_{\text{KdV}}+\mathcal{O}(q^2).
\end{equation*}
On the other hand, $\tilde{u}:=S_{\text{KdV}}(\psi_{\text{KdV}})\in U_q$,
so again using \eqref{min-rel-1} obtain
\begin{align*}
I_q&\leq \mathcal{E}(\tilde{u})\\
&=q+\mathcal{E}_{\text{KdV}}(\tilde{u})+\mathcal{O}(q^2)\\
&=q+q^{5/3}\mathcal{E}_{\text{KdV}}(\psi_{\text{KdV}})+\mathcal{O}(q^2)\\
&=q+q^{5/3}I_{\text{KdV}}+\mathcal{O}(q^2),
\end{align*}
which concludes the proof of \eqref{min-rel-2}.
\end{proof}

The statement of Theorem \ref{main-res-2} is a summary of the following lemmas.
\begin{lemma}
\label{approx-thm}
There exists $q_0>0$ such that for any
$q\in (0,q_0)$ and $u\in D_q$ there exists $x_u\in \mathbb{R}$ such that 
\begin{equation*}
\norm{S^{-1}_{\text{KdV}}(u)-\psi_{\text{KdV}}(\cdot -x_u)}_{H^1}\lesssim q^{1/6},
\end{equation*}
uniformly with respect to $q\in(0,q_0)$ and $u\in D_q$.
\end{lemma}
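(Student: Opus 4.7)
The plan is to reduce the claim to a quadratic near-minimizer estimate for the KdV variational problem. Fix $u \in D_q$ and set $\psi := S^{-1}_{\text{KdV}}(u)$. A direct change of variable shows $\mathcal Q(\psi) = q^{-1} \mathcal Q(u) = 1$, so $\psi \in V_1$. Moreover, the scaling of $\mathcal{E}_{\text{KdV}}$ gives $\mathcal{E}_{\text{KdV}}(u) = q^{5/3}\mathcal{E}_{\text{KdV}}(\psi)$, and combining this with Lemma \ref{rel-kdv-lemma} together with the identities \eqref{min-rel-1}--\eqref{min-rel-2} (using that $\mathcal E(u) = I_q$) yields
\[
  q^{5/3}\bigl(\mathcal E_{\text{KdV}}(\psi) - I_{\text{KdV}}\bigr) = \mathcal O(q^2),
\]
hence $\mathcal E_{\text{KdV}}(\psi) - I_{\text{KdV}} \lesssim q^{1/3}$ uniformly in $u \in D_q$. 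So $\psi$ is a near minimizer of the KdV problem with deficit of order $q^{1/3}$.

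Next I would establish the following quadratic near-minimizer bound for the KdV problem: there exists $\delta_0 > 0$ such that whenever $\psi \in V_1$ satisfies $\mathcal E_{\text{KdV}}(\psi) - I_{\text{KdV}} \le \delta \le \delta_0$, one has
\[
  \inf_{x_0 \in \mathbb R} \norm{\psi - \psi_{\text{KdV}}(\cdot - x_0)}_{H^1} \lesssim \sqrt{\delta}.
\]
Applying this with $\delta \asymp q^{1/3}$ immediately gives $\inf_{x_u} \norm{\psi - \psi_{\text{KdV}}(\cdot - x_u)}_{H^1} \lesssim q^{1/6}$, which is the claim.

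The main task is therefore the quadratic bound, which I would prove in two stages. Stage one is a qualitative compactness step: by the classical concentration-compactness analysis of the KdV functional on $V_1$, any sequence $\{\psi_n\} \subset V_1$ with $\mathcal E_{\text{KdV}}(\psi_n) \to I_{\text{KdV}}$ is, up to translation, relatively compact in $H^1$, and every accumulation point is a minimizer, hence of the form $\psi_{\text{KdV}}(\cdot - x_0)$. Consequently, for $\delta_0$ small enough, every $\delta_0$-near minimizer lies in an $H^1$-neighbourhood of the orbit $\{\psi_{\text{KdV}}(\cdot - x_0) : x_0 \in \mathbb R\}$. Stage two is a modulation/Taylor expansion around this orbit: pick $x_0$ minimizing $\norm{\psi - \psi_{\text{KdV}}(\cdot - x_0)}_{H^1}$, write $h := \psi - \psi_{\text{KdV}}(\cdot - x_0)$ so that $h$ is $L^2$-orthogonal to $\partial_x\psi_{\text{KdV}}(\cdot - x_0)$, and expand
\[
  \mathcal E_{\text{KdV}}(\psi) - I_{\text{KdV}}
  = \tfrac12\,\mathrm d^2\bigl(\mathcal E_{\text{KdV}} + \lambda_0\mathcal Q\bigr)(\psi_{\text{KdV}}(\cdot - x_0))(h,h) + \mathcal O(\norm{h}_{H^1}^3),
\]
using the Euler--Lagrange equation \eqref{kdv} and $\mathrm d\mathcal Q(\psi_{\text{KdV}})(h) = \mathcal O(\norm{h}_{L^2}^2)$ coming from the constraint $\mathcal Q(\psi_{\text{KdV}} + h) = 1$.

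The hard part is the spectral coercivity estimate: the linearised operator $-\tfrac13\partial_x^2 + \lambda_0 + 3\psi_{\text{KdV}}(\cdot - x_0)$ has a one-dimensional kernel spanned by $\partial_x\psi_{\text{KdV}}(\cdot - x_0)$ (removed by the choice of $x_0$) and a one-dimensional negative direction spanned by $\psi_{\text{KdV}}(\cdot - x_0)$ (controlled by the constraint $\mathcal Q(\psi) = 1$); on the remaining subspace it is strictly positive. This is a classical fact for the KdV soliton and yields $\norm{h}_{H^1}^2 \lesssim \mathcal E_{\text{KdV}}(\psi) - I_{\text{KdV}}$ once $\norm{h}_{H^1}$ is small, completing the quadratic bound and hence the lemma.
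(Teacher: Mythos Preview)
Your plan is correct and is precisely the standard argument the paper defers to: the paper does not give its own proof but states that it is identical to \cite[Theorem~5.5]{Duchene_Nilsson_Wahlen}, which proceeds exactly as you outline --- use \eqref{min-rel-1}--\eqref{min-rel-2} to show $\mathcal E_{\text{KdV}}(S^{-1}_{\text{KdV}}u)-I_{\text{KdV}}=\mathcal O(q^{1/3})$, then invoke concentration--compactness for the KdV constrained problem together with the classical spectral coercivity of the linearized operator around the soliton orbit to obtain the quadratic near-minimizer bound. Two small wording points that do not affect the scheme: minimizing the $H^1$-distance over $x_0$ yields $H^1$- (not $L^2$-) orthogonality of $h$ to $\partial_x\psi_{\text{KdV}}(\cdot-x_0)$, and the negative eigendirection of the Hessian is not literally $\psi_{\text{KdV}}$ itself, but the constraint $\mathcal Q(\psi)=1$ controls the dangerous direction via the usual Vakhitov--Kolokolov-type argument.
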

The proof of Lemma \ref{approx-thm}
is identical to the proof of \cite[Theorem 5.5]{Duchene_Nilsson_Wahlen}
and is therefore omitted.
We next relate the two Lagrange multipliers $\lambda$ and $\lambda_0$.
\begin{lemma}
\label{lagrange-approx}
The Lagrange multipliers related to the minimization problem \eqref{general_min_problem}, satisfy 
\begin{equation*}
\lambda=-1+\lambda_0q^{2/3}+\mathcal{O}(q^{5/6}).
\end{equation*}
\end{lemma}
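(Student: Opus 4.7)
The plan is to extract $\lambda$ from the Pohozaev-type identity obtained by pairing the Euler--Lagrange equation with $u$, which was already derived at the end of Section \ref{ex-min} in the proof of Theorem \ref{main-res-1}, namely
\[
    -2\lambda q = -\mathcal{L}(u) + 3\mathcal{E}(u) + 4\mathcal{N}_r(u).
\]
Since $\mathcal{E}(u) = I_q = q + q^{5/3} I_{\text{KdV}} + \mathcal{O}(q^2)$ from \eqref{min-rel-2} and $|\mathcal{N}_r(u)| \lesssim \|u\|_{L^2}^4 \lesssim q^2$ from Proposition \ref{N-est}, the whole task reduces to a sharp expansion of $\mathcal{L}(u)$ to order $q^{5/3}$, together with the identification of the leading constant via KdV quantities.

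First I would Taylor-expand the symbol as $m(\xi) = 1 + \xi^2/3 + \mathcal{O}(\xi^4)$ and apply Plancherel to write
\[
    \mathcal{L}(u) = q + \tfrac{1}{6}\|\partial_x u\|_{L^2}^2 + \mathcal{O}\bigl(\|\partial_x^2 u\|_{L^2}^2\bigr).
\]
The remainder is $\mathcal{O}(q^{7/3})$ by \eqref{refined_estimate3}, hence absorbed into $\mathcal{O}(q^{11/6})$. For the main $q^{5/3}$ term I would use the KdV scaling identity $\|\partial_x u\|_{L^2}^2 = q^{5/3}\|\psi'\|_{L^2}^2$ with $\psi = S_{\text{KdV}}^{-1}(u)$, and then replace $\psi$ by $\psi_{\text{KdV}}$ (up to translation) using Lemma \ref{approx-thm}, giving
\[
    \|\partial_x u\|_{L^2}^2 = q^{5/3}\|\psi_{\text{KdV}}'\|_{L^2}^2 + \mathcal{O}(q^{11/6}).
\]
Combining everything yields
\[
    -2\lambda q = 2q + q^{5/3}\bigl(3 I_{\text{KdV}} - \tfrac{1}{6}\|\psi_{\text{KdV}}'\|_{L^2}^2\bigr) + \mathcal{O}(q^{11/6}).
\]

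The final step is to compute the bracketed constant. Testing the KdV travelling wave equation \eqref{kdv} against $\psi_{\text{KdV}}$ and against $x\psi_{\text{KdV}}'$ (the standard Pohozaev pair) gives two linear relations among $\|\psi_{\text{KdV}}'\|_{L^2}^2$, $\int \psi_{\text{KdV}}^3$, and $\lambda_0$, in combination with $\|\psi_{\text{KdV}}\|_{L^2}^2 = 2$ (from $\mathcal{Q}(\psi_{\text{KdV}}) = 1$). These solve to $\|\psi_{\text{KdV}}'\|_{L^2}^2 = \tfrac{6}{5}\lambda_0$, $\int \psi_{\text{KdV}}^3 = -\tfrac{8}{5}\lambda_0$, and $I_{\text{KdV}} = -\tfrac{3}{5}\lambda_0$, whence
\[
    3 I_{\text{KdV}} - \tfrac{1}{6}\|\psi_{\text{KdV}}'\|_{L^2}^2 = -\tfrac{9}{5}\lambda_0 - \tfrac{1}{5}\lambda_0 = -2\lambda_0.
\]
Dividing by $-2q$ gives exactly $\lambda = -1 + \lambda_0 q^{2/3} + \mathcal{O}(q^{5/6})$.

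The main obstacle is the accurate Taylor expansion of $\mathcal{L}(u)$ at order $q^{5/3}$ with a remainder strictly better than $\mathcal{O}(q^{5/3})$; this is not immediate from the crude bounds of Section \ref{technical-results} but becomes straightforward once one has the refined estimate $\|\partial_x^2 u\|_{L^2}^2 = \mathcal{O}(q^{7/3})$ from Lemma \ref{higher-regularity-est-lemma} together with the $H^1$ long-wave approximation of Lemma \ref{approx-thm}. Everything else is essentially bookkeeping and classical KdV identities.
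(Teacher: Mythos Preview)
Your proof is correct but follows a different path from the paper's. The paper works directly with the full derivative $\langle \mathrm{d}\mathcal{E}(u),u\rangle$: it invokes the estimate \eqref{diffrem-est} on $\langle \mathrm{d}\mathcal{E}_{\text{rem}}(u),u\rangle$ from Lemma~\ref{rel-kdv-lemma} to write $\langle \mathrm{d}\mathcal{E}(u),u\rangle = 2q + q^{5/3}\langle \mathrm{d}\mathcal{E}_{\text{KdV}}(\psi),\psi\rangle + \mathcal{O}(q^2)$ with $\psi=S_{\text{KdV}}^{-1}(u)$, replaces $\psi$ by $\psi_{\text{KdV}}$ via Lemma~\ref{approx-thm}, and then reads off the constant in one stroke from the KdV Euler--Lagrange identity $\langle \mathrm{d}\mathcal{E}_{\text{KdV}}(\psi_{\text{KdV}}),\psi_{\text{KdV}}\rangle = -2\lambda_0$. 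No explicit Pohozaev computation is needed.

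Your route instead splits $-2\lambda q = -\mathcal{L}(u)+3I_q+\mathcal{O}(q^2)$, expands $\mathcal{L}(u)$ by the global symbol bound $|m(\xi)-1-\xi^2/3|\lesssim \xi^4$ (which the paper also records in the proof of Lemma~\ref{rel-kdv-lemma}) together with \eqref{refined_estimate3}, and feeds in \eqref{min-rel-2} for $I_q$; the price is that the leading constant now appears as $3I_{\text{KdV}}-\tfrac{1}{6}\lVert\psi_{\text{KdV}}'\rVert_{L^2}^2$, which you then have to identify as $-2\lambda_0$ via the two Pohozaev relations. This is perfectly valid, and your algebra for the KdV constants is correct. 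The trade-off: the paper's argument is shorter and avoids the explicit Pohozaev step by packaging everything into the single KdV Lagrange identity, while yours is slightly more elementary in that it bypasses the derivative remainder estimate \eqref{diffrem-est} and relies only on the direct symbol expansion plus the already-established formula \eqref{min-rel-2}.
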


\begin{proof}
Let $u\in D_q$.
From Lemma \ref{rel-kdv-lemma} we have
\begin{equation}
\label{diff-eq-lemma}
\langle \mathrm{d}\mathcal{E}(u),u\rangle =2q+\langle\mathrm{d}\mathcal{E}_{\text{KdV}}(u),u\rangle +\mathcal{O}(q^2).
\end{equation}
Moreover, $\langle\mathrm{d}\mathcal{E}_{\text{KdV}}(u),u\rangle=q^{5/3}\langle \mathrm{d}\mathcal{E}_{\text{KdV}}(S_{\text{KdV}}^{-1}(u)),S_{\text{KdV}}^{-1}(u)\rangle$,
and by Lemmas \ref{higher-regularity-est-lemma}, \ref{approx-thm}
\begin{equation*}
\langle \mathrm{d}\mathcal{E}_{\text{KdV}}(S_{\text{KdV}}^{-1}(u)),S_{\text{KdV}}^{-1}(u)\rangle-\langle \mathrm{d}\mathcal{E}_{\text{KdV}}(\psi_{\text{KdV}}),\psi_{\text{KdV}}\rangle =\mathcal{O}(q^{1/6}).
\end{equation*}
Combining this with \eqref{diff-eq-lemma}, we obtain
\begin{equation}\label{diff-eq-lemma-2}
\langle \mathrm{d}\mathcal{E}(u),u\rangle=2q+q^{5/3}\langle \mathrm{d}\mathcal{E}_{\text{KdV}}(\psi_{\text{KdV}}),\psi_{\text{KdV}}\rangle+\mathcal{O}(q^{11/6}).
\end{equation}
On the other hand,
from the Euler-Lagrange equations we have
\begin{align*}
2\lambda q+\langle\mathrm{d}\mathcal{E}(u),u\rangle&=0,\\
2\lambda_0+\langle \mathrm{d}\mathcal{E}_{\text{KdV}}(\psi_{\text{KdV}}),\psi_{\text{KdV}}\rangle&=0,
\end{align*}
and when we combine this with \eqref{diff-eq-lemma-2}, we get
\begin{equation*}
-2\lambda q=2q-2\lambda_0q^{5/3}+\mathcal{O}(q^{11/6}),
\end{equation*}
and dividing with $-2q$ yields
\begin{equation*}
\lambda=-1+\lambda_0q^{2/3}+\mathcal{O}(q^{5/6}).
\end{equation*}
\end{proof}

For each solution $u$ of \eqref{maineq1},
we have the corresponding physical parameters $\eta_u$, $v_u$
defined by \eqref{eta-u}, \eqref{v-u}
where $-1/c^2=\lambda=-1+\lambda_0q^{2/3}+\mathcal{O}(q^{5/6})$
by Lemma \ref{lagrange-approx}.
We have the following estimates for $\eta_u$, $v_u$
that are similar to the one given in Lemma \ref{approx-thm}.

\begin{lemma}
There exists $q_0>0$ such that for $q\in (0,q_0)$ and $u\in D_q$ there exists $x_u\in \mathbb{R}$ such that 
\begin{equation*}
	\norm{
		S^{-1}_{\text{KdV}}(\eta_u)+\psi_{\text{KdV}}(\cdot -x_u)
	} _{H^{1/2}}
	\lesssim q^{1/6}
	,
\end{equation*}
\begin{equation*}
	\norm{
		S^{-1}_{\text{KdV}}(v_u)+\psi_{\text{KdV}}(\cdot -x_u)
	} _{H^{3/2}}
	\lesssim q^{1/6}
\end{equation*}
uniformly with respect to $q\in(0,q_0)$ and $u\in D_q$.
\end{lemma}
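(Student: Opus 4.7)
The plan is to reduce both estimates to Lemma~\ref{approx-thm} together with the asymptotic expansion $c=-1+\mathcal{O}(q^{2/3})$ coming from Lemma~\ref{lagrange-approx}. Write $\phi=S^{-1}_{\text{KdV}}(u)$, let $K^{\alpha}_q$ denote the Fourier multiplier with symbol $m^{\alpha}(q^{1/3}\cdot)$, set $\psi=\psi_{\text{KdV}}(\cdot-x_u)$, and $r=\phi-\psi$, so that $\|r\|_{H^1}\lesssim q^{1/6}$. A direct scaling computation, using $S^{-1}_{\text{KdV}}(fg)=q^{2/3}S^{-1}_{\text{KdV}}(f)\,S^{-1}_{\text{KdV}}(g)$ and $S^{-1}_{\text{KdV}}(Kf)=K_q S^{-1}_{\text{KdV}}(f)$, yields
\[
S^{-1}_{\text{KdV}}(v_u)=cK^{-1/2}_q\phi,\qquad S^{-1}_{\text{KdV}}(\eta_u)=-c^2 K^{1/2}_q\phi-\tfrac{c^2 q^{2/3}}{2}(K^{-1/2}_q\phi)^2,
\]
so that
\[
S^{-1}_{\text{KdV}}(v_u)+\psi=(cK^{-1/2}_q+I)\psi+cK^{-1/2}_q r,
\]
and an analogous decomposition holds for $\eta_u$. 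The estimates then reduce to bounding the soliton pieces, the quadratic remainder, and the two critical terms $K^{1/2}_q r$ and $K^{-1/2}_q r$ in the prescribed Sobolev norms.

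First I would handle the soliton pieces. Since $\widehat{\psi}$ is exponentially decaying and the expansion $m^{\pm 1/2}(q^{1/3}\xi)=1\mp(q^{1/3}\xi)^2/6+\mathcal{O}((q^{1/3}\xi)^4)$ holds uniformly on the essential frequency support of $\widehat{\psi}$, we get $\|(I-K^{\pm 1/2}_q)\psi\|_{H^s}\lesssim q^{2/3}$ for every $s\ge 0$; combined with $c+1=\mathcal{O}(q^{2/3})$ and $1-c^2=\mathcal{O}(q^{2/3})$, this gives $\|(cK^{-1/2}_q+I)\psi\|_{H^{3/2}}$ and $\|(I-c^2K^{1/2}_q)\psi\|_{H^{1/2}}$ of order $q^{2/3}$, which is stronger than the target rate. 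For the quadratic remainder I would invoke the Sobolev product estimate to get $\|(K^{-1/2}_q\phi)^2\|_{H^{1/2}}\lesssim\|K^{-1/2}_q\phi\|_{H^1}^2$, which by Lemma~\ref{higher-regularity-est-lemma} (after rescaling) is uniformly bounded, yielding an $\mathcal{O}(q^{2/3})$ contribution.

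The main step is the estimate of $K^{1/2}_q r$ in $H^{1/2}$ and $K^{-1/2}_q r$ in $H^{3/2}$. I would split the Fourier integral at $|\xi|=q^{-1/3}$. For the first, using $m(q^{1/3}\xi)\lesssim 1$ below this threshold and $m(q^{1/3}\xi)\lesssim q^{1/3}|\xi|$ above it gives
\[
\|K^{1/2}_q r\|_{H^{1/2}}^2\lesssim\|r\|_{H^{1/2}}^2+q^{1/3}\|r\|_{H^1}^2\lesssim\|r\|_{L^2}\|r\|_{H^1}+q^{2/3}\lesssim q^{1/3},
\]
by interpolation and Lemma~\ref{approx-thm}; taking a square root supplies the desired $q^{1/6}$. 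The analogous estimate for $K^{-1/2}_q r$ in $H^{3/2}$ uses $m^{-1}(q^{1/3}\xi)\lesssim 1$ below the threshold and $m^{-1}(q^{1/3}\xi)\lesssim q^{-1/3}\langle\xi\rangle^{-1}$ above it, together with the uniform $H^2$-boundedness of $r$ inherited from the rescaled form of Lemma~\ref{higher-regularity-est-lemma} to trade derivatives against powers of $q$ in the high-frequency regime.

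The main obstacle is precisely this last $H^{3/2}$-bound for $v_u$: unlike the $H^{1/2}$-estimate for $\eta_u$, where $K^{1/2}$ naturally loses half a derivative and so matches the target norm, the operator $K^{-1/2}$ gains smoothness only above $|\xi|\sim q^{-1/3}$, so a naive interpolation $\|r\|_{H^{3/2}}\lesssim\|r\|_{H^1}^{1/2}\|r\|_{H^2}^{1/2}$ fails to give the full $q^{1/6}$. The delicate point is to combine, in the intermediate frequency window $1\le|\xi|\le q^{-1/3}$, the $H^1$-smallness from Lemma~\ref{approx-thm} with the uniform $H^2$-bound from the rescaled Lemma~\ref{higher-regularity-est-lemma}, balancing the two via the threshold $q^{-1/3}$ so that both contributions to $\|K^{-1/2}_q r\|_{H^{3/2}}^2$ are of order $q^{1/3}$.
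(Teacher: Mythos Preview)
Your treatment of $\eta_u$ is essentially the paper's: the same decomposition $K_q^{1/2}\phi-\psi=K_q^{1/2}r+(K_q^{1/2}-1)\psi$, the same use of Lemma~\ref{approx-thm} for the $r$-term, the Taylor expansion of $m$ for the soliton piece, and Lemma~\ref{lagrange-approx} for the constant $c^2-1$. Your frequency splitting for $K_q^{1/2}r$ is a slight detour; the paper simply uses $m(q^{1/3}\xi)\lesssim\langle\xi\rangle$ uniformly for $q<1$ to obtain $\|K_q^{1/2}r\|_{H^{1/2}}\lesssim\|r\|_{H^1}\lesssim q^{1/6}$ in one line.

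The gap is in the $v_u$ estimate. Your final paragraph asserts that with the threshold $q^{-1/3}$ ``both contributions to $\|K^{-1/2}_q r\|_{H^{3/2}}^2$ are of order $q^{1/3}$''. The high-frequency contribution is indeed $q^{-1/3}\int_{|\xi|>q^{-1/3}}\langle\xi\rangle^2|\widehat r|^2\le q^{-1/3}\cdot q^{2/3}\|r\|_{H^2}^2\lesssim q^{1/3}$ via the rescaled Lemma~\ref{higher-regularity-est-lemma}. But on the low-frequency window you only have $m^{-1}(q^{1/3}\xi)\le 1$, so
\[
\int_{|\xi|\le q^{-1/3}}\langle\xi\rangle^3|\widehat r(\xi)|^2\,\mathrm{d}\xi
\ \le\ \sup_{|\xi|\le q^{-1/3}}\langle\xi\rangle\cdot\|r\|_{H^1}^2
\ \lesssim\ q^{-1/3}\cdot q^{1/3}=1,
\]
which is $O(1)$, not $O(q^{1/3})$. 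Optimizing the threshold (splitting at $R$ and balancing $R\,\|r\|_{H^1}^2$ against $R^{-1}\|r\|_{H^2}^2$) gives $R=q^{-1/6}$ and a bound of $q^{1/6}$ on the \emph{square}, i.e.\ only $\|K_q^{-1/2}r\|_{H^{3/2}}\lesssim q^{1/12}$, the same as the naive interpolation you already rejected. With only $\|r\|_{H^1}\lesssim q^{1/6}$ and $\|r\|_{H^2}\lesssim 1$ available, your scheme cannot reach $q^{1/6}$ for the $H^{3/2}$-bound. The paper itself does not spell out the $v_u$ case either --- it proves the $\eta_u$ estimate and declares the second ``analogous'' --- so you are not missing an argument that is written there; but the specific balancing you propose does not close.
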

\begin{proof}

We will prove the first inequality.
The second one can be proved analogously.
Firstly, one can notice that due to
$1/2 < -\lambda < 1$
in accordance with to Estimate \eqref{crude-est-lambda},
it is enough to prove
\begin{equation}
\label{eta_estimate_proof_01}
	\norm
	{
		\lambda S^{-1}_{\text{KdV}}(\eta_u)
		+ \lambda \psi_{\text{KdV}}(\cdot -x_u)
	} _{H^{1/2}}\lesssim q^{1/6},
\end{equation}
where $x_u$ is taken as in Lemma \ref{approx-thm}.
The first term under the norm in \eqref{eta_estimate_proof_01}
has the form
\begin{equation*}
	\lambda S_{\text{KdV}}^{-1}(\eta_u)
	=
	\frac{q^{-2/3}}{2}\left(K^{-1/2}u\right)^2(q^{-1/3}\cdot)
	+
	q^{-2/3}(K^{1/2}u)(q^{-1/3}\cdot)
\end{equation*}
where the first element of the sum is negligible
in view of the straightforward estimate
\begin{equation*} 
	\norm{(K^{-1/2}u)^2(q^{-1/3}\cdot)}_{H^{1/2}}\lesssim q.
\end{equation*}
The second element of the sum can be rewritten as follows.
We note that
\begin{equation*}
(K^{1/2}u)(q^{-1/3}x)=(K_q^{1/2}u(q^{-1/3}\cdot))(x),
\end{equation*}
where we used $K_q$ to denote the Fourier multiplier operator with symbol $m(q^{1/3}\cdot)$.
We then get that $q^{-2/3}(K^{1/2}u)(q^{-1/3}\cdot)=K_q^{1/2}S_{\text{KdV}}^{-1}(u)$. Using this, we find that
\begin{multline*}
q^{-2/3}(K^{1/2}u)(q^{-1/3}\cdot)-\psi_{\text{KdV}}(\cdot-x_u)
=
K_q^{1/2}S_{\text{KdV}}^{-1}(u)-\psi_{\text{KdV}}(\cdot-x_u)
\\
=
K_q^{1/2}(S_{\text{KdV}}^{-1}(u)-\psi_{\text{KdV}}(\cdot-x_u))+(K_q^{1/2}-1)\psi_{\text{KdV}}(\cdot-x_u)
.
\end{multline*}
Here the last term is estimated as
\begin{equation*}
\norm{(K_q^{1/2}-1)\psi_{\text{KdV}}(\cdot-x_u)}_{H^{1/2}}\lesssim q^{1/3}\norm{\psi_{\text{KdV}}}_{H^{3/2}}.
\end{equation*}
Finally, we have
\begin{multline*}
	\norm
	{
		\lambda S^{-1}_{\text{KdV}}(\eta_u)
		+ \lambda \psi_{\text{KdV}}(\cdot -x_u)
	} _{H^{1/2}}
	\lesssim
	\norm
	{
		K_q^{1/2}(S_{\text{KdV}}^{-1}
		- \psi_{\text{KdV}}(\cdot-x_u))
	} _{H^{1/2}}
	\\
	+
	\norm
	{
		(K_q^{1/2}-1)\psi_{\text{KdV}}(\cdot-x_u)
	} _{H^{1/2}}
	+
	\norm
	{
		( 1 + \lambda ) \psi_{\text{KdV}}(\cdot-x_u)
	} _{H^{1/2}}
	+ q^{1/3}
\end{multline*}
that gives \eqref{eta_estimate_proof_01} by
Lemma \ref{approx-thm} and \ref{lagrange-approx}.

\end{proof}


\vskip 0.05in
\noindent
{\bf Acknowledgments.}
{
E.D. was supported by the
Norwegian Research Council.
D.N. was supported by an ERCIM `Alain Bensoussan' Fellowship
and by grant no. 250070 from the Research Council of Norway.
}

\bibliographystyle{acm}
\bibliography{bibliography}

\end{document}